\documentclass[11pt]{article}
\usepackage{amsfonts}
\usepackage{amscd}
\usepackage{amsmath}
\usepackage{epsfig}
\usepackage{amsthm}
\usepackage{amssymb}
\usepackage{amsbsy}
\usepackage{latexsym}
\usepackage{eucal}
\usepackage{mathrsfs}
\usepackage[all, knot]{xy}
\xyoption{arc} \xyoption{color}\xyoption{line}



\newtheorem{definition}{Definition}

\newtheorem{proposition}{Proposition}
\newtheorem{corollary}{Corollary}
\newtheorem{remark}{Remark}
\newtheorem{lemma}{Lemma}
\newtheorem{prop}{Proposition}
\newtheorem{lem}{Lemma}
\newtheorem{cor}{Corollary}
\newtheorem{Remark}{Remark}

\def\be{\begin{equation}}
\def\ee{\end{equation}}
\def\ben{\begin{displaymath}}
\def\een{\end{displaymath}}
\def\baa{\begin{eqnarray}}
\def\eaa{\end{eqnarray}}

\def\ba{\begin{array}}
\def\ea{\end{array}}

\newcommand{\Id}{\operatorname{Id}}
\renewcommand{\det}{\operatorname{det}}

\makeatletter
\@addtoreset{equation}{section}
\makeatother

\newcommand{\Tr}{\operatorname{Tr}}

\newtheorem{thm}{Theorem}

\def\be{\begin{equation}}
\def\ee{\end{equation}}
\def\ben{\begin{displaymath}}
\def\een{\end{displaymath}}
\def\baa{\begin{eqnarray}}
\def\eaa{\end{eqnarray}}

\def\ba{\begin{array}}
\def\ea{\end{array}}

\addtolength{\hoffset}{-1.2cm} \addtolength{\textwidth}{2cm}
\addtolength{\voffset}{-1.5cm}\addtolength{\textheight}{4cm}

\def\C{{\mathbb C}}
\def\Z{{\mathbb Z}}

\def\Pl{\mathbb{P}^1}
\def\2x2{{\left(\!\!\begin{array}{cc}a&b\\c&d\\\end{array}\!\!\right)}}
\def\del{\partial}
\def\f{\frac}

\newcommand{\bfm}{\mathbf{m}}
\newcommand{\tbfm}{\tilde{\mathbf{m}}}
\newcommand{\dom}{\mbox{dom}}
\newcommand{\Green}{\mathcal{G}}
\newcommand{\dis}{\displaystyle}
\newcommand{\und}{{\frac{1}{2}}}

\newcommand{\ma}{\mathrm{max}}
\newcommand{\ov}{\overline}
\newcommand{\ovz}{\overline{z}}
\newcommand{\ovw}{\overline{w}}

\newcommand{\ovl}{\overline{\lambda}}
\newcommand{\pw}{\partial_w}
\newcommand{\povw}{\partial_{\overline{w}}}

\newcommand{\povz}{\partial_{\overline{z}}}

\newcommand{\dia}{\flat}

\newcommand{\eps}{\varepsilon}

\newcommand{\spec}{\rm spec}
\newcommand{\alp}{\alpha}

\newcommand{\R}{{\mathbb{R}}}
\newcommand{\Ibb}{{\mathbb{I}}}
\newcommand{\Gbb}{{\mathbb{G}}}
\newcommand{\Nbb}{{\mathbb{N}}}
\newcommand{\Ncal}{{\mathcal{N}}}
\newcommand{\Hcal}{{\mathcal{H}}}

\newcommand{\detF}{\det_{\mathrm{Fr}}}

\begin{document}
\title{Moduli spaces of meromorphic functions and determinant of Laplacian}

\author{Luc Hillairet\footnote{{\bf E-mail: luc.hillairet@univ-orleans.fr}}, Victor Kalvin\footnote{{\bf E-mail: vkalvin@gmail.com}}, Alexey Kokotov\footnote{{\bf E-mail: alexey.kokotov@concordia.ca}} }

\maketitle

\vskip0.5cm
\begin{center}
MAPMO (UMR 7349 Universit\'e d'Orl\'eans-CNRS)
UFR Sciences, B$\hat{a}$timent de math\'ematiques
rue de Chartres,
BP 6759
45067 Orl\'eans Cedex 02
\end{center}

\vskip0.5cm
\begin{center}
Department of Mathematics and Statistics, Concordia
University, 1455 de Maisonneuve Blvd. West, Montreal, Quebec, H3G
1M8 Canada \end{center}

\vskip2cm
{\bf Abstract.}
The Hurwitz space is the moduli space of pairs $(X,f)$ where $X$ is a compact Riemann surface and $f$ is a meromorphic function on $X$. We study the Laplace operator $\Delta^{|df|^2}$ of the flat singular Riemannian manifold $(X,|df|^2)$. We define a regularized determinant for $\Delta^{|df|^2}$ and study it as a functional on the Hurwitz space. We prove that this functional is related to
a system of PDE which admits explicit integration. This leads to an explicit expression for the determinant of the Laplace operator in terms of the basic objects on the underlying Riemann surface (the prime form, theta-functions, the canonical meromorphic bidifferential) and the divisor of the meromorphic differential $df.$
The proof has several parts that can be of independent interest. As an important intermediate result we prove a decomposition formula of the type of Burghelea-Friedlander-Kappeler for the determinant of the Laplace operator on flat surfaces with conical singularities and Euclidean or conical ends. We introduce and study the  $S$-matrix, $S(\lambda)$, of a surface with conical singularities as a function of the spectral parameter $\lambda$ and relate its behavior at $\lambda=0$ with the Schiffer projective connection on the Riemann surface $X$. We also prove variational formulas for eigenvalues of the Laplace operator of a compact surface with conical singularities
when the latter move.

\vskip2cm

\section{Introduction}

\subsection{General part}

Studying the determinants of Laplacians on Riemann surfaces is motivated by needs of quantum field theory (in connection with various partition functions) and geometric analysis (in particular, in connection with Sarnak program, \cite{Sarnak}). The explicit expressions for the determinant of the Laplacian in
the metric of constant negative curvature (\cite{DH-Ph}) and in the Arakelov metric (obtained in \cite{Alvarez} in relation to so-called bosonization formulas from the string theory) for compact Riemann surfaces of genus $g>1$ are among the most spectacular results of the subject. According to Sarnak program, these determinants (which are functions on the moduli space of Riemann surfaces) can be used to study the geometry of the moduli space via  methods of Morse theory. In particular, their behavior at the boundary of moduli space is of great importance and was intensively studied (see, e. g., \cite{Wolpert}, \cite{Wentworth}).

It seems very interesting to consider the case which is in a certain
sense opposite to the case of the metric of constant curvature:
instead of distributing the curvature uniformly along the Riemann
surface $X$ one can concentrate it at a finite set  $\{P_1, \dots,
P_M\}\subset X$. This leads to a flat metric $\bfm$ on $X$ with
singularities (e. g. conical) at $P_k$. Determinants of Laplacians for
various classes of singular flat metrics were introduced and studied
at least on the formal level (via path integrals) by physicists 
(\cite{Sonoda},\cite{Zamolodchikov}, \cite{Knizhnik},
\cite{BershRadul}) and certain explicit expressions for them were
produced (see, e. g. , \cite{Sonoda}, \cite{Zamolodchikov}).

 One of the main challenges is to study such determinants from the
 spectral theory of self-adjoint operators and perturbation theory
 point of view. This was done in the mathematical literature for the
 determinants of the Laplacians of smooth metrics, in particular,
 those two mentioned above (see, e. g., Fay's book \cite{Fay} for complete compendium and
consistent exposition). The standard definition of the
determinant uses the $\zeta$-function of the corresponding Laplace operator
 \begin{equation}\label{eq1}
 \ln {\rm det}\Delta^{\bf m}=-\zeta'_{\Delta^{\bf m}}(0)\,,
 \end{equation}
 \begin{equation}\label{eq2}
 \zeta_{\Delta^{\bf m}}(s)=\sum_j \frac{1}{\lambda_j^s}\,,
 \end{equation}
where in the latter expression the sum is extended over all non-zero
eigenvalues of $\Delta^{\bfm}.$  This definition makes sense when the
metric ${\bf m}$ have conical singularities provided a regular
self-adjoint extension is considered, for instance the Friedrichs one
(see \cite{HK} for the definition of regular) . Indeed, the Laplace operator $\Delta^{\bf m}$ (with natural domain consisting of smooth functions on $X$  supported outside
 the conical points $P_k$) is not essentially self-adjoint. This fact is never mentioned by the physicists 
  and it is not clear whether this issue has been addressed. Comparing the determinants of the different self-adjoint extensions of $\Delta^{\bf m}$ leads to a nice application of Birman-Krein theory and is done in \cite{HK} (see also the references therein).
In what follows we consider the Friedrichs extension of $\Delta^{\bf m}$,  our results refer to this self-adjoint extension only.

In \cite{KK-DG} it was found an explicit expression for the
determinant of (the Friedrichs extension of) the Laplace operator
corresponding to a flat conical metric $\bfm$ with {\it  trivial
  holonomy}. Any metric of this type can be written $|\omega|^2$, where $\omega$ is a holomorphic one-form on $X$,  zeros of $\omega$  of multiplicity $\ell$ are the conical points of the metric $|\omega|^2$ with conical angle $2\pi(\ell+1)$. The moduli space of pairs $(X, \omega)$, where $X$ is a compact Riemann surface and $\omega$ is a holomorphic one-form on $X$, is stratified according to the multiplicities of $\omega$ (see \cite{KZ}).  In \cite{KK-DG} it was proved that on each stratum of the moduli space of holomorphic differentials  the ratio $$\frac{{\rm det} \Delta^{|\omega|^2}}{{\rm Area}(X){\rm det}\,\Im {\mathbb B}}\,$$ where  ${\mathbb B}$ is the matrix of $b$-periods of the Riemann surface $X$, coincides with the modulus square of a holomorphic function $\tau$ on the stratum. This holomorphic function $\tau$ (the so-called Bergman tau-function on the space of holomorphic differentials) admits explicit expression through theta-functions, prime-forms, and the divisor of the holomorphic one-form $\omega$. In the case $g=1$ the holomorphic one-forms have no zeroes, the metric $|\omega|^2$ is smooth, and the corresponding result coincides with the classical Ray-Singer formula for the determinant of the Laplacian on an elliptic curve with flat conformal metric.

In \cite{K-PAMS} it was found a comparison formula (an analog of classical Polyakov formula) relating determinants of the Laplacians in two conformally equivalent flat conical metrics. This lead to the generalization of the results of \cite{KK-DG} to the case of arbitrary flat conformal metrics with conical singularities.

Together with determinant of the Laplacians in flat conical metrics given by the modulus square of the holomorphic one form (these metrics have finite volume and the spectra of the corresponding self-adjoint Laplacians are discrete) in physical literature appear determinants of the Laplacians corresponding to flat metrics $|\omega|^2$, where $\omega$ is now a {\it meromorphic} one form on $X$. Depending on the order of the poles of $\omega$, the corresponding non compact Riemannian manifold $(X, |\omega|^2)$ of the infinite volume has cylindrical, Euclidean, or conical ends. The spectrum of the corresponding Laplace operator is continuous (with possible embedded eigenvalues, say, in case of cylindrical ends) and the Ray-Singer regularization of the determinant (\ref{eq1}, \ref{eq2}) is no longer applicable.
The way to regularize such determinants is, in principle, also
well-known (see, e. g., \cite{Muller}): considering the Laplacian
$\Delta$ as a perturbation of some properly chosen "free" operator
$\mathring\Delta$, one introduces a relative determinant ${\rm det}(\Delta, \mathring\Delta)$ in terms of the relative $\zeta$-function
\be\label{111}
\zeta(s; \Delta, \mathring\Delta)=\frac{1}{\Gamma(s)}\int_0^\infty {\rm Tr} (e^{-\Delta t}-e^{-\mathring\Delta t})t^{s-1}\,dt,
\ee
 where a suitable regularization of the integral is made (being understood in the conventional sense the integral is usually divergent for any value of $s$).

Following this approach, in \cite{HKK} we studied the regularized
determinants $$\ {\rm det}(\Delta, \mathring\Delta)=e^{-\zeta'(0;
  \Delta, \mathring\Delta)}$$ of the Laplacians on the so-called
Mandelstam diagrams - the flat surfaces with cylindrical ends (more
precisely, Riemann surfaces $X$ with the metric $|\omega|^2$, where
$\omega$ is a meromorphic one-form on $X$ having only simple poles and
such that all the periods of $\omega$ are pure imaginary and all the residues of $\omega$ at the poles are real).

In the present paper we consider determinants of the Laplacian corresponding to flat metrics with even wilder singularities: the corresponding Riemannian manifolds have Euclidean (i. e. isometric to a vicinity of the point at infinity of the Euclidean plane) and/or  conical ends (i. e. isometric to a vicinity of the point at infinity of a straight cone).
These metrics are given as the modulus square of the differential of an arbitrary meromorphic function $f$ on a compact Riemann surface $X$.
The moduli space of pairs $(X, f)$ is called the Hurwitz space ${\cal H}$. We define and study the regularized determinant of the Laplace operator corresponding to the metric $|df|^2$ as a functional on  ${\cal H}$. The main result of this work is an explicit formula for the determinant.

 It should be mentioned that  such determinants appeared for the first time in \cite{Zamolodchikov}, \cite{BershRadul} (see also \cite{Knizhnik}), although no attempt was made to define them rigorously.

\subsection{Results and organization of the paper}
Let $X$ be a Riemann surface and $f$ be a meromorphic function $f \,:\,X\rightarrow \Pl.$ The metric $|df|^2$ gives to $X$ a structure of a
(non-compact) flat Riemannian manifold with conical singularities and conical (or Euclidean) ends. The conical singularities are located at the critical points $P_1,\dots,P_M$
of $f$ (or, equivalently, at the zeros of the meromorphic one-form $df$), the ends of $X$ are located at the poles of  $df$.  The moduli space of (equivalence classes) of such pairs $(X,f)$ is known
as Hurwitz space $\Hcal$ and the critical values $z_m$, $z_m=f(P_m)$, $m=1, \dots, M$,  locally parametrize $\Hcal$.

Given such a Riemannian manifold $(X,|df|^2)$, we introduce the reference manifold $(\mathring{X},\mathring{\bfm})$ as the disjoint union of
the complete cones corresponding to the ends of $(X,|df|^2)$.  By $\Delta$  and $\mathring{\Delta}$ we denote the Laplace operators on  $(X,|df|^2)$ and $(\mathring{X}, \mathring{\bfm})$  correspondingly.

 The first part of the paper aims at defining the relative zeta-regularized determinant $\det_\zeta(\Delta, \mathring{\Delta})$ and
proving a version of the Burghelea-Friedlander-Kappeler (BFK in what follows) gluing formula (see \cite{BFK}).
This new BFK type formula is a generalization of the Hassell-Zelditch  formula for the determinant of the Laplacian in exterior domains~\cite{Hassell-Zelditch}; here we
rely  on ideas from~\cite{Carron1,Carron2,Hassell-Zelditch}.

In order to obtain the gluing formula, we   cut $X$ along some hypersurface $\Sigma.$ This decomposes $X$ into a
compact part $X_-$ and the union of conical/Euclidean ends $X_+$. The latter is isometric to the reference surface $\mathring{X}$ with a compact part
$\mathring{X}_-$ removed. There is some latitude in choosing the initial $\Sigma$. In order to choose $\Sigma$ we first specify some large $R$ and then in each end of $(X,|df|^2)$ we take a circle whose radius depends on R and on the cone angle of the end, see Definition~\ref{def:Sigma}. As expected the gluing formula then involves the Neumann jump operator $\Ncal$ on $\Sigma$ and the Dirichlet Laplacian $\Delta^D_-$ on $X_-$, see Theorem~\ref{Mintro} below.
 \begin{thm} \label{Mintro}
For $R$ large enough we have the BFK gluing formula
$$
\det_\zeta (\Delta,\mathring\Delta)= C \det^*_\zeta \mathcal N\cdot\det_\zeta\Delta^D_-,
$$
where $\mathcal N$, $\Delta_-^D$ depend on $R$. The constant $C$ depends on $R$ but not on the moduli parameters
$z_1,\dots z_M$ as long as the corresponding critical points $P_m$ do not approach $\Sigma.$
\end{thm}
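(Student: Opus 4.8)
The plan is to combine the classical BFK philosophy with the relative-determinant formalism adapted to the non-compact ends. The starting point is the surgery decomposition $X = X_-\cup_\Sigma X_+$ induced by the hypersurface $\Sigma$ of Definition~\ref{def:Sigma}, together with the isometry identifying $X_+$ with $\mathring X\setminus \mathring X_-$. On the reference side we decompose $\mathring X = \mathring X_-\cup_\Sigma X_+$ in exactly the same way. First I would record the standard gluing identities for \emph{heat kernels} or, equivalently, for resolvents: for a closed manifold $Y$ cut along $\Sigma$ into $Y_-$ and $Y_+$, one has
$$
(\Delta_Y + \mu)^{-1} = (\Delta^D_{Y_-}+\mu)^{-1}\oplus(\Delta^D_{Y_+}+\mu)^{-1} + \text{(Poisson operators)}\circ \mathcal N(\mu)^{-1}\circ(\cdots),
$$
where $\mathcal N(\mu)$ is the Dirichlet-to-Neumann / Neumann jump operator on $\Sigma$ at spectral parameter $-\mu$. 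Since both $X$ and $\mathring X$ share the identical piece $X_+$, when one forms the \emph{difference} of the corresponding trace expressions, the contribution of $\Delta^D_{X_+}$ cancels exactly. This is the mechanism by which a formula valid a priori only for compact surfaces produces a statement about the relative operator $(\Delta,\mathring\Delta)$: the non-compact, continuous-spectrum part lives entirely in $X_+$ and drops out of every difference.

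The second step is to make this precise at the level of $\zeta$-regularized determinants. I would introduce a large auxiliary mass $\mu$, write the BFK formula for the \emph{shifted} operators — where everything is trace-class and the classical BFK theorem of \cite{BFK} applies on the compact pieces — obtaining
$$
\frac{\det_\zeta(\Delta+\mu)}{\det_\zeta(\Delta^D_-+\mu)\,\det_\zeta(\Delta^D_{X_+}+\mu)} = C(\mu)\,\det{}^*_\zeta \mathcal N(\mu),
$$
and the analogous identity with $\Delta$ replaced by $\mathring\Delta$ and $X_-$ by $\mathring X_-$. Dividing the two identities, the $\det_\zeta(\Delta^D_{X_+}+\mu)$ factors cancel, and one is left with
$$
\frac{\det_\zeta(\Delta+\mu)}{\det_\zeta(\mathring\Delta+\mu)} = \frac{C(\mu)}{\mathring C(\mu)}\cdot\frac{\det^*_\zeta\mathcal N(\mu)}{\det^*_\zeta\mathring{\mathcal N}(\mu)}\cdot\frac{\det_\zeta(\Delta^D_-+\mu)}{\det_\zeta(\mathring X_-;\ \Delta^D+\mu)}.
$$
The left-hand side is, by definition of the relative $\zeta$-function \eqref{111}, controlled as $\mu\to 0^+$: its behavior yields $\det_\zeta(\Delta,\mathring\Delta)$ up to explicit powers of $\mu$ coming from zero modes and from the small-$t$ heat asymptotics (the relative heat trace has an asymptotic expansion whose constant term feeds into $\zeta(0;\Delta,\mathring\Delta)$). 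I would then take $\mu\to 0$ on both sides, tracking each factor: on $X_-$ and $\mathring X_-$ (genuinely compact, Dirichlet conditions, no kernel) the limits are the honest $\det_\zeta\Delta^D_-$ and a fixed constant; the ratio $C(\mu)/\mathring C(\mu)$ — built from the short-time heat coefficients on $\Sigma$, hence from the intrinsic and extrinsic geometry of the circles in the ends — has a finite nonzero limit $C$ that depends only on $R$ and the cone angles, \emph{not} on $z_1,\dots,z_M$; and $\mathring{\mathcal N}(\mu)$, the Neumann jump operator built from the two sides of $\Sigma$ inside the reference cones, is likewise $z$-independent, so its determinant can be absorbed into $C$.

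The main obstacle — and the place where I expect the real work to be — is the analysis near $\mu=0$ on the non-compact side, i.e., justifying that $\det_\zeta(\mathring\Delta+\mu)$ and $\mathcal N(\mu)$ have the required controlled expansions as $\mu\downarrow 0$ despite the continuous spectrum reaching down to $0$. Concretely: (i) one must show the relative heat trace $\Tr(e^{-t\Delta}-e^{-t\mathring\Delta})$ is integrable at $t=\infty$ and has a genuine small-$t$ asymptotic expansion (this is where the hypotheses on the ends, Euclidean vs.\ conical, and the choice of $\Sigma$ enter — the circles must be placed so that the relevant model operators on the cones are explicitly diagonalizable via Bessel functions); (ii) one must establish that the Neumann jump operator $\mathcal N(\mu)$ on $\Sigma$, which involves the resolvent of the Laplacian on the \emph{infinite} cone $X_+$, is a well-defined invertible elliptic pseudodifferential operator of order $1$ on the compact hypersurface $\Sigma$ with a $\mu\to0$ limit $\mathcal N$ whose modified determinant $\det^*_\zeta\mathcal N$ exists — this is exactly the content imported from \cite{Carron1,Carron2} and the Hassell–Zelditch argument \cite{Hassell-Zelditch}, and the generalization here is to conical rather than purely Euclidean ends. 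Once these two analytic inputs are in place, the algebra of dividing the two BFK identities and letting $\mu\to0$ is routine, and the $z$-independence of $C$ follows because every $\mu$-dependent quantity other than $\det_\zeta\Delta^D_-$ and $\det_\zeta(\Delta,\mathring\Delta)$ is computed from geometry supported in $X_+$ or on $\Sigma$, which is frozen as long as the critical points $P_m$ stay away from $\Sigma$.
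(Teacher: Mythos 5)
Your overall strategy coincides with the paper's: write the gluing formula for shifted operators at regular (negative) spectral parameter, divide the identity for $(X,\Delta)$ by the one for $(\mathring X,\mathring\Delta)$ so that the common piece $X_+$ cancels, and pass to the limit as the spectral parameter goes to $0$. Two points, however, are genuine gaps rather than deferred technicalities. First, the intermediate identity cannot be written with standalone factors $\det_\zeta(\Delta+\mu)$ and $\det_\zeta(\Delta^D_{X_+}+\mu)$: these operators have continuous spectrum and no zeta determinant. The cancellation has to be performed at the level of heat-trace differences, i.e.\ one writes $e^{-t\Delta}-e^{-t\mathring\Delta}$ as a sum of three trace-class differences and obtains the exact analogue of your displayed ratio but with every entry a \emph{relative} determinant (this is identity (2.11) of the paper). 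This is fixable, but it is what makes the algebra legitimate.

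Second, and more seriously, your description of the $\mu\to0$ limit is wrong when the number of ends $K$ exceeds $1$. You expect the divergences to be ``explicit powers of $\mu$ coming from zero modes''; in fact the relative determinant $\det_\zeta(\Delta-\lambda^2,\mathring\Delta-\lambda^2)$ and the ratio $\det_\zeta\mathcal N(\lambda)/\det_\zeta\mathring{\mathcal N}(\lambda)$ each diverge like $\bigl(\ln\tfrac{i}{\lambda}\bigr)^{K-1}$, a logarithmic threshold effect produced by the lowest eigenvalue $\mu_0(\lambda)\sim -1/(R\ln\lambda)$ of the exterior Dirichlet-to-Neumann operator on each end. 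Consequently $\det_\zeta(\Delta,\mathring\Delta)$ is not obtained as a limit at all: the relative zeta function has a term $s(K-1)\ln(-s)$ at $s=0$ and the determinant must be \emph{defined} by subtracting this singular part (the Hassell--Zelditch regularization), after which the two $(\ln\frac{i}{\lambda})^{K-1}$ divergences cancel and leave the explicit factor $(Re^{\gamma+\ln 2})^{K-1}$ in the constant $C$. Tracking this requires the refined asymptotics $\det_\zeta\mathcal N(\lambda)=-\frac{1}{R\ln\lambda}\det^*_\zeta\mathcal N(0)\bigl(1+O(1/\ln\lambda)\bigr)$ with the coefficient and the first correction made explicit, together with the resulting sharp asymptotics $\xi(\lambda)=(\ln\lambda^2)^{-1}+O((\ln\lambda)^{-2})$ of the spectral shift function (which is also what makes the relative heat trace integrable at $t=\infty$). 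The paper notes that the versions of these estimates available in the literature you cite contain an unspecified constant and are not sharp enough, so this refinement is the core of the proof rather than an importable black box.
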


 Note that the proof of the gluing formula also holds for a more general class of metric (see Remark \ref{rk:mgm}).

Let us now sketch some steps leading to this Theorem.
First we start from the BFK gluing formula for $\det_\zeta(\Delta-\lambda,\mathring\Delta-\lambda)$ obtained in~\cite{Carron1} for negative (regular) values of the spectral parameter $\lambda$.
In order to obtain a gluing formula for $\det_\zeta(\Delta,\mathring\Delta)$,  we study the behaviour of all ingredients in the gluing formula for  $\det_\zeta(\Delta-\lambda,\mathring\Delta-\lambda)$ as $\lambda\to0-$
(i.e. at the bottom of the continuous spectrum of $\Delta$ and $\mathring\Delta$)
and then pass to the limit. As usual, this essentially reduces to derivation of asymptotics as $\lambda\to0-$ for the zeta regularized determinant of the  Neumann jump operator and for the spectral shift function of the pair $(\Delta,\mathring\Delta)$.  In principle both asymptotics were obtained in~\cite{Carron2} for  Schr\"odinger type operators on manifolds with conical ends. Unfortunately those asymptotics cannot be used for our purposes because  the asymptotic for the Neumann jump operator contains an unspecified constant and the asymptotic for the spectral shift function is not sufficiently sharp.   We demonstrate that at least in our setting (no potential) the methods of~\cite{Carron2} can be improved to specify the constant and to obtain a sufficiently sharp asymptotic of the spectral shift function as needed for the proof of our BFK formula. Once
these asymptotics are obtained, we follow the lines of~\cite{Hassell-Zelditch} in our study of the behaviour of  $\det_\zeta(\Delta-\lambda,\mathring\Delta-\lambda)$ as $\lambda\to0-$
and also in definition of $\det_\zeta(\Delta,\mathring\Delta)$.

Using this BFK formula, we prove (as it was done in similar situations in \cite{K-PAMS}, \cite{HKK}) that the variations of the determinant of the Laplacian with respect to the moduli parameters $z_k$  remain the same if we replace the metric $\bfm=|df|^2$ of infinite volume by a metric $\tbfm$ of finite volume, where  $\tbfm$ coincides with $\bfm$ outside vicinities of the poles of $f$ and with some standard nonsingular metric of finite volume inside these vicinities. The aim of the second part of the paper is thus to study the zeta-regularized determinant of this new metric $\tbfm$
and its variation with respect to moduli parameters.

We show that these variations can be conveniently expressed using the so-called $S$-matrix, so we start the second part of the paper by introducing this object and deriving
some of its properties. We think that the $S$-matrix is  an important characteristic of a compact Riemann surface $X$ equipped with a conformal metric $\tbfm$ with conical
singularities. It is introduced in analogy with scattering problems and the general theory of boundary triples (see \cite{Grubb}).
Basically, the $S$-matrix is a meromorphic  matrix function of the spectral parameter $\lambda$. It serves as a formal analog of the scattering matrix on complete non-compact manifolds (say, on  manifolds with cylindrical ends) in the sense that the elements of $S$ are coefficients in asymptotics of certain eigenfunctions   (all growing terms in the asymptotics near conical singularities of   $\tbfm$ are   interpreted as incoming waves and all decaying terms --- as outgoing). Let us also note that there is no true scattering on the (incomplete) manifold $(X, \tbfm)$  (corresponding operators have no continuous spectrum) and that $S(\lambda)$ is a non-unitary matrix with poles at the eigenvalues $\lambda$ of the Friedrichs self-adjoint Laplacian on  $(X, \tbfm)$~\cite{HK}.

It turns out that the value, $S(0)$, of the $S$-matrix can be found
explicitly: it depends only on the conical angle at the conical point,
the conformal class of the surface $X$, and  the choice of the
holomorphic local parameter near the conical point (the so-called
distinguished local parameter for the metric $\tbfm$). For instance,
when the conical angle is $4\pi,$ we express the matrix elements of
$S(0)$ through the Bergman reproducing kernel for holomorphic
differentials and a certain special projective connection on $X$ (the
so-called Schiffer projective connection).

In the general case we prove that a certain  linear combination of the
matrix elements of $S(0)$ (actually, the one that appears later in the
variational formulas for the determinant) can be expressed as the
derivative (of the order depending on the conical angle) of the
Schiffer projective connection.


We continue by studying the moduli variations of the zeta-regularized determinant of $\Delta^{\tbfm}.$ We use the Kato-Rellich perturbation
theory to compute the variation of individual eigenvalue branches and then a contour argument similar to the one from \cite{HK} to get the variational formula
for the determinant. This formula involves a combination of the matrix elements of $S(0)$ and hence the Schiffer projective connection.
Writing it into an invariant form, we obtain  the following theorem.

\begin{thm}\label{thm:introvardet}
Let $P_m$ be a zero of the meromorphic differential $df$ of
multiplicity $\ell_m$ and let $z_m=f(P_m)$ be the corresponding
critical value of $f$. Let the metric $\tbfm$ be obtained from the
metric $|df|^2$  via smoothing of the conical ends.
Then
\begin{equation}
\partial_{z_m}\ln \frac{{\rm det}_\zeta^* (\Delta^{\tbfm})}{{\rm det}\Im {\mathbb B}}=-\frac{1}{12\pi i}\oint_{P_m}\frac{S_B-S_f}
{df}\,
\end{equation}
where $S_B$ is the Bergman projective connection, $S_f=\frac{f'''f'-\frac{3}{2}(f'')^2}{(f')^2}$ is the Schwarzian derivative, and $\mathbb B$ is the matrix of $b$-periods.
\end{thm}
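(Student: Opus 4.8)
The plan is to reduce the variational formula for $\ln\det_\zeta^*(\Delta^{\tbfm})$ to a local residue computation at the critical point $P_m$. First I would invoke the Kato--Rellich perturbation theory to differentiate individual eigenvalue branches $\lambda_j(z_m)$ of the Friedrichs Laplacian $\Delta^{\tbfm}$: since varying $z_m$ moves the conical point of multiplicity $\ell_m$, the perturbation is supported near $P_m$, and the variation $\partial_{z_m}\lambda_j$ can be expressed through the behaviour of the corresponding normalized eigenfunction near $P_m$ --- concretely, through the coefficients appearing in its asymptotic expansion in the distinguished local parameter, i.e. through the matrix elements of the $S$-matrix $S(\lambda_j)$. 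This is exactly the mechanism by which the $S$-matrix, introduced in the second part of the paper, enters: the diagonal of $\partial_{z_m} S(\lambda)$ (or rather a specific linear combination of its entries) governs $\partial_{z_m}\lambda_j$. Summing over eigenvalues with a contour-integral regularization (as in \cite{HK}), the zeta-regularized determinant $\partial_{z_m}\ln\det_\zeta^*(\Delta^{\tbfm})$ picks up the value of this linear combination at $\lambda=0$, i.e. the relevant combination of entries of $S(0)$.

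Next I would feed in the explicit description of $S(0)$ obtained earlier: the particular linear combination of matrix elements of $S(0)$ that shows up in the variational formula is, by the results quoted in the excerpt, expressible as a derivative (of order dictated by $\ell_m$, equivalently by the cone angle $2\pi(\ell_m+1)$) of the Schiffer projective connection. The Schiffer connection differs from the Bergman projective connection $S_B$ by a globally defined quadratic differential built from the Bergman kernel, and crucially it differs from the flat (Euclidean) projective connection in the distinguished local parameter by the Schwarzian $S_f$ of $f$; this is where the combination $S_B-S_f$ comes from. I would then assemble these local data into the contour integral $\oint_{P_m}\frac{S_B-S_f}{df}$, checking that the order of the pole of $S_f$ at $P_m$ (which has a pole of order $2\ell_m$ in a local parameter, since $df$ vanishes to order $\ell_m$) together with the $\ell_m$-th order zero of $df$ in the denominator produces a genuine residue, and that the combinatorial constant coming from the perturbation-theory sum and the order of differentiation collapses to $-\tfrac{1}{12\pi i}$. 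The factor $1/12$ is the familiar central-charge-type constant attached to projective connections and should emerge from the normalization of the Schiffer connection together with the $\frac{1}{2}$'s in the asymptotic expansions; I would pin it down by comparing with the genus-one or trivial-holonomy special cases of \cite{KK-DG}.

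The invariance of the right-hand side is then a matter of bookkeeping: although $S_B$, $S_f$, and $df$ are each only locally defined objects (projective connections transform with a Schwarzian cocycle, $df$ is a differential), the combination $(S_B-S_f)/df$ transforms as a meromorphic function times $df^{-1}$, i.e. the contour integral $\oint_{P_m}$ is coordinate-independent because the cocycle contributions in $S_B$ and in $S_f$ cancel --- both are the Schwarzian of the coordinate change --- leaving a well-defined meromorphic $1/2$-order-free residue. I would state this cancellation as a short lemma. The $\det\Im\mathbb B$ normalization enters through the $b$-period dependence of the Bergman kernel and is handled exactly as in \cite{KK-DG}: $\partial_{z_m}\ln\det\Im\mathbb B$ contributes the term that converts the "naive" variational formula into the $S_B$ (rather than bare Bergman kernel) form.

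The main obstacle I expect is the perturbation-theory step: making rigorous the claim that moving a conical point of arbitrary multiplicity $\ell_m$ produces an eigenvalue variation controlled precisely by the advertised linear combination of $S(0)$-entries. One must handle the fact that the eigenfunctions are singular at $P_m$ (they lie in the Friedrichs domain but have nontrivial asymptotic expansions there), so the "Hellmann--Feynman" differentiation has to be carried out with a careful integration-by-parts argument on $X$ with a small disc around $P_m$ removed, tracking boundary terms on the shrinking circle --- these boundary terms are exactly what assemble into the $S$-matrix entries, and getting their coefficients (including signs and the $1/12$) exactly right is the delicate part. A secondary difficulty is the uniformity needed to justify the contour-integral resummation over all eigenvalue branches and the interchange of $\partial_{z_m}$ with the zeta-regularized sum, but this follows the template already used in \cite{HK} and \cite{HKK}.
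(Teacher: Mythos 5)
Your proposal follows essentially the same route as the paper: Kato--Rellich/Feynman--Hellmann differentiation of eigenvalue branches with boundary terms on shrinking circles around $P_m$ assembling into the combination $\sum_{\mu+\nu=1}\sqrt{\mu\nu}\,S^{hh}_{\mu\nu}$, a contour-integral resummation yielding $\partial_w\zeta'(0;\Delta^{\tbfm})=\Tr\left(AS^{hh}(0)\right)$, the identification of that combination with a derivative of order $\ell_m-1$ of the Schiffer projective connection, and the passage to the invariant $S_B-S_f$ form via the Rauch formulas for $\partial_{z_m}\ln\det\Im{\mathbb B}$. One minor slip: in the distinguished local parameter the Schwarzian $S_f$ has only a double pole at $P_m$ (with coefficient $\tfrac{1-(\ell_m+1)^2}{2}$), not a pole of order $2\ell_m$, but this does not affect the structure of your argument.
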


In this theorem we can replace $\det_\zeta^*(\Delta^{\tbfm})$ by $\det
(\Delta,\mathring{\Delta})$ since we  proved before that the moduli
variations of both functions coincide.

The system of PDE for  $\det\Delta^{\tbfm}$ that appears in Theorem \ref{thm:introvardet}  is the governing system for the Bergman tau-function on the Hurwitz space (introduced and studied in \cite{KK-MPAG}, \cite{KK-DG}, \cite{KKZ}, \cite{KS}).
The latter system was explicitly integrated in \cite{KK-IMRN} and in \S5 we remind this result (unfortunately, technically involved). This leads to the following
explicit formula for $\det_\zeta (\Delta, \mathring\Delta )$.

\begin{thm}\label{thm:introfinal}
Let $(X, f)$ be an element of the Hurwitz space $\Hcal(M, N)$ and let $\tau(X, f)$ be given by expressions (\ref{E0}, \ref{E1}, \ref{Eg}).
There is the following explicit expression for the regularized relative determinant of the Laplacian $\Delta$ on the Riemann surface $X$:
\begin{equation}
{\rm det}_\zeta (\Delta,\mathring\Delta)=C\,{\rm det}\Im {\mathbb B}\,|\tau|^2\,,\end{equation}
 where $C$ is a constant that depends only on the connected component of the space $\Hcal(M, N)$ containing the element $(X, f)$.
\end{thm}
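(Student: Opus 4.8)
The plan is to combine the variational formula of Theorem~\ref{thm:introvardet} with the known integration of the Bergman tau-function system. The starting observation is that Theorem~\ref{thm:introvardet} together with the remark immediately following it asserts that the moduli variations of $\ln\bigl({\rm det}_\zeta(\Delta,\mathring\Delta)/{\rm det}\,\Im\mathbb B\bigr)$ coincide with the moduli variations of $\ln\bigl({\rm det}_\zeta^*(\Delta^{\tbfm})/{\rm det}\,\Im\mathbb B\bigr)$, and both equal $-\frac{1}{12\pi i}\oint_{P_m}\frac{S_B-S_f}{df}$. I would first recall from \S5 that the Bergman tau-function $\tau(X,f)$ on the Hurwitz space $\Hcal(M,N)$, as defined by the explicit expressions (\ref{E0}, \ref{E1}, \ref{Eg}), is characterized (up to a constant depending only on the connected component) as a holomorphic solution of exactly the system
\begin{equation}
\partial_{z_m}\ln\tau(X,f)=-\frac{1}{12\pi i}\oint_{P_m}\frac{S_B-S_f}{df}\,,\qquad m=1,\dots,M.
\end{equation}
This is the content of \cite{KK-IMRN} reproduced in \S5, so I may assume it.

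Next I would compare the two functionals. Set
\begin{equation}
F(X,f)=\ln\frac{{\rm det}_\zeta(\Delta,\mathring\Delta)}{{\rm det}\,\Im\mathbb B}\,,\qquad
G(X,f)=\ln|\tau(X,f)|^2\,.
\end{equation}
From Theorem~\ref{thm:introvardet} and the holomorphy of $\tau$ one has $\partial_{z_m}F=\partial_{z_m}\ln\tau=\partial_{z_m}G$ for every $m$, where on the right one uses that $\partial_{z_m}\ln|\tau|^2=\partial_{z_m}\ln\tau$ because $\ln\bar\tau$ is anti-holomorphic in the $z_m$. Taking complex conjugates, $\partial_{\bar z_m}F=\partial_{\bar z_m}G$ as well, since $F$ is real-valued and the right-hand side of the variational formula is, after conjugation, exactly $\partial_{\bar z_m}\ln\bar\tau$. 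Hence $d(F-G)=0$ on each coordinate chart given by the critical values $z_1,\dots,z_M$, so $F-G$ is locally constant; it is therefore constant on each connected component of $\Hcal(M,N)$. Exponentiating gives
\begin{equation}
{\rm det}_\zeta(\Delta,\mathring\Delta)=C\,{\rm det}\,\Im\mathbb B\,|\tau|^2
\end{equation}
with $C$ constant on each connected component, which is the assertion.

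The main obstacle I anticipate is not in this last gluing argument — which is essentially a one-line consequence once the pieces are in place — but in making sure the two real ingredients are genuinely available in the form needed: first, that the variational formula of Theorem~\ref{thm:introvardet} is valid for ${\rm det}_\zeta(\Delta,\mathring\Delta)$ itself and not merely for the smoothed determinant (this is where the BFK formula of Theorem~\ref{Mintro} and the earlier comparison of moduli variations of the infinite- and finite-volume determinants are used, so one must check that those results cover the full Hurwitz space, including configurations where critical points are near the ends, by a limiting/continuity argument), and second, that the $z_m$ really do furnish local holomorphic coordinates on $\Hcal(M,N)$ so that "$dF=dG$ pointwise" legitimately yields "$F-G$ locally constant." A secondary point worth spelling out is why the constant $C$ cannot depend on continuous moduli: this is precisely because $F-G$ has vanishing differential in the only coordinates around, so its level sets are open, forcing constancy on connected components; any residual dependence on discrete data (the connected component, determined by the ramification profile $(M,N)$ and combinatorial monodromy) is allowed and is exactly what the statement permits.
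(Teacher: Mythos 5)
Your proposal is correct and follows essentially the same route as the paper: the paper likewise combines the variational formula \eqref{vardwa} with the holomorphy of its right-hand side to conclude that $\partial^2_{z_m\bar z_n}\ln\bigl({\rm det}_\zeta(\Delta,\mathring\Delta)/{\rm det}\,\Im\mathbb B\bigr)=0$, whence $\det_\zeta(\Delta,\mathring\Delta)=C\,{\rm det}\,\Im\mathbb B\,|\tau|^2$ with $\tau$ a holomorphic solution of the system \eqref{tau-gov}, and then identifies $\tau$ with the explicit expressions \eqref{E0}, \eqref{E1}, \eqref{Eg} integrated in \cite{KK-IMRN}. Your reorganization (starting from the explicit $\tau$ and showing $d\bigl(F-\ln|\tau|^2\bigr)=0$ using the reality of $F$) is logically the same argument, and the caveats you flag are exactly the ones the paper itself leaves at the level of "local considerations."
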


We finish the paper with two illustrating examples in genus $0$, deriving the formulas for the determinant of the Laplacian on the space of polynomials of degree $N$ and on the space of rational functions with three simple poles.

{\bf Acknowledgements.} This work was finished during the stay of the third author (A. K.) at the Max Planck Institute of Mathematics (Bonn). A. K. thanks the Institute for hospitality and excellent working conditions. The work of L.H  is partly supported by the ANR program Gerasic-ANR-13-BS01-0007-0. The work of A. K. is supported by NSERC and ANR program Gerasic-ANR-13-BS01-0007-0.

\section{The regularized determinant as a functional on Hurwitz space and a BFK gluing formula}
To a pair $(X,f)$, where $X$ is a compact Riemann surface and $f$ is a meromorphic
function on $X$ (i. e. to an element of the Hurwitz space), there corresponds a Riemannian manifold $(X, |df|^2)$.
Our aim is to define a regularized determinant of the corrresponding Laplacian and to prove a BFK-type gluing formula.
Since the metric $\bfm=|df|^2$ has conical singularities and non-compact conical ends, this is not that straightforward and requires several steps.
First, we consider regular values of the spectral parameter $\lambda^2$, i.e.  $\lambda^2\in \C\setminus [0,\infty)$.
In that case, the definition of the relative determinant and the BFK-gluing formula are the same as in \cite{Carron1}.
Then we derive estimates for the determinant of the Dirichlet-to-Neumann operator as $\lambda$ approaches $0$; our
methods here are closely related to those of~\cite{Carron2}.  These estimates allow us to
define a zeta-regularized determinant for $\lambda=0$ similarly to~\cite{Hassell-Zelditch}. Then we prove the gluing formula for thus defined determinant.
At the end of this section we use the gluing formula to compactify $(X,|df|^2)$ in such a way, that
locally, the moduli variations remain the same.

\subsection{The flat Laplacian of an element in Hurwitz space}
We will be dealing with conical singularities and conical ends. These are defined in the following way.

\begin{definition}\hfill \\
\vspace{-11pt}
\begin{itemize}
\item For any $\ell  \in \Nbb$ the Euclidean cone of total angle $2\ell\pi$ is the Riemannian manifold $(\C, |\ell y^{\ell-1}dy|^2).$
\item A point $P$ in a Riemannian manifold will be a conical singularity of angle $2\ell \pi$ if there is a neighbourhood of $P$ that is isometric to
the set $\left(\{|y|<\eps\},\,|\ell y^{\ell-1}dy|^2\right)$ for some positive $\eps.$
\item A open set $\Omega \subset X$ of a Riemannian manifold $(X,\bfm)$ such that $(\Omega,\bfm)$ is isometric to $\left( \{|y|>R\},\,|\ell y^{\ell-1}dy|^2\right)$
for some positive $R$ will be called a conical end of angle $2\ell \pi$ (Euclidean end if $\ell=1$).
\end{itemize}
\end{definition}

Let $(X,\bfm)$ be a Riemannian manifold such that the metric is flat with a finite number of conical singularities and conical ends. Denote by
$\Delta$ the self-adjoint  Friedrichs extension of the (non-negative) symmetric Laplace operator defined on  smooth compactly supported functions that vanish near the conical singularities.

 Let $f$ be a meromorphic function on a compact Riemann surface $X$ of genus $g\geq 0$ or, what is the same, a ramified covering of the Riemann sphere
 \begin{equation}\label{covering}
  f: X\to \Pl.
 \end{equation}

Two coverings $f_1:X_1\to P^1$ and $f_2:X_2\to P^1$ are called equivalent if there exists a biholomorhic map $g:X_1\to X_2$ such that $f_1=f_2\circ g$.

The following constructions are standard, we recall them for convenience of the reader.

The critical points, $P_m$, $m=1, \dots, M,$ of the function $f$  (i. e. those points for which $df(P_m)=0$) are the ramification points of the covering, the points $z_m=f(P_m)$ are called the critical values. The ramification index of the covering at the point $P_m$ equals to $\ell_m+1$, where $\ell_m$ is the order of the zero of the one-form $df$ at $P_m$. Denote by $\infty_1, \dots, \infty_K$ the poles of $f$, and let $k_1,\dots, k_K$ be their multiplicities.

Then the covering (\ref{covering}) has degree $N=k_1+\dots\,+ k_K$ and the following Riemann-Hurwitz formula holds:

$$\sum_{m=1}^M \ell_m-\sum_{j=1}^{K}(k_j+1)=2g-2\,,$$
where $g$ is the genus of $X.$

Pick some regular value $z_0\in \Pl$ and draw on $\Pl$ the segments $I_0:= [z_0,\infty], ~I_m=[z_0,z_m],~m=1,\dots,M$.
It may happen that some segment is repeated several times if different critical points take the same critical value.
We may also choose $z_0$ such that all these segments have pairwise disjoint interiors.
Denote by $L:= \bigcup_{m=0}^M I_m$ the union of these segments and
observe that $\Pl\setminus L$ contains only regular values of $f$. It follows that $X\setminus f^{-1}(L)$, the complement of the preimage of $L$ by $f$ in $X$ has $N$
connected components. By construction $f$ is a biholomorphic map from each of these connected components onto $\Pl \setminus L.$ We denote these
connected components by $\C_n,\,n=1\dots N$ and call them the sheets
of the covering. Each $\C_n$ can be seen as a copy of the complex plane equipped with the cuts provided by $L.$

%
%

On each sheet, the metric $|dz|^2$ lifted from the base $\Pl$ to the covering space coincides with the metric $|df|^2$. The Riemannian manifold $(X, |df|^2)$ is thus obtained by gluing $N$ copies of a Euclidean plane (${\mathbb C}, |dz|^2$) with a system of
non-intersecting cuts, one of which extends to infinity.

For each critical point $P_m$ of ramification index $\ell_m+1$, we obtain a $(\ell_m+1)$-cycle $\gamma_j$ obtained by looking in
which order the sheets are following one another when making a small loop around $P_m.$ It follows that $P_m$
is a conical singularity of angle $2\pi(\ell_m+1).$

For each $z_m,~ m\neq 0$, we obtain a permutation in  $S_N$ by composing the cycles for each critical point
in $f^{-1}(z_m).$ We thus obtain $M'$ permutations $\sigma_{m'},~m'=1\dots M'$, where $M'$ is the number of different critical values.
The critical value $z_m$ is thus associated with one cycle in one of the permutations $\sigma_{m'},\, m'=1\dots M'.$

In the same manner we obtain a permutation $\sigma_0$ by looking at
the preimage of a large loop that surrounds $z_0$ (or equivalently, a
small loop around $\infty$ in the base $\Pl$).
This permutation describes the structure at infinity of the Riemannian manifold $(X,|df|^2)$ :
each fixed point of $\sigma_0$ corresponds to a flat Euclidean end and a cycle of length $k$ to a conical end of angle $2k\pi.$
A pole in $f$ of order $k$ corresponds to a conical end of angle $2k\pi$ (and therefore a Euclidean end for a simple pole).

The flat structure on $(X,|df|^2)$ is completely characterized by the positions of the critical values $z_m, \, m=1\dots M$, and by the
permutations $\sigma_{m'}, \,m'=0\dots M'$. Conversely, starting from $M'+1$ permutations of $S_N,$ and $M'$ distinct points $w_1,\dots w_{m'}$
in $\C,$ we construct the sequence $z_0,\dots z_M$ by choosing a distinct point $z_0$ and, for $m>0$, by repeating $w_{m'}$ as many times as there are disjoint
cycles in $\sigma_{m'}.$ We then glue $N$ sheets according to the scheme prescribed by the  permutations and
 obtain a (not necessarily connected) flat surface $(X,\bfm)$ with conical singularities and conical ends.

It turns out that it is always possible to find a meromorphic function $f$ from $X$ to $\Pl$ such that $(X,\bfm)$ is isometric to $(X,|df|^2)$.

Introduce the Hurwitz space $\Hcal(N, M)$ of equivalence classes of
coverings $f: X\to P^1$ of  degree $N$ with  $M$ ramification points
of (fixed) indices $\ell_1+1$, $\dots$, $\ell_M+1$ and $K$ poles of
(fixed) multiplicities $k_1, \dots, k_\ell$; $k_1+\dots, k_K=N$. The
space $\Hcal(N, M)$ is a complex manifold of dimension $M$ (see
\cite{Fulton}, we notice here that it may have more than one connected
components) and the critical values $z_1, \dots, z_M$ can be taken as
local coordinates on $\Hcal(N, M)$.

If all the critical points of the maps $f$ are simple, then the corresponding Hurwitz space is usually denoted by
$\Hcal_{g, N}(k_1, \dots, k_K)$ and is known to be connected (see \cite{Natanzon}).

\begin{definition}\label{def:moduli}
We will refer to the coordinates $z_1, \dots, z_M$ as {\it moduli}.
\end{definition}

From the flat metric point of view, moving $z_m$ can be  easily realized by cutting a small ball around $P_m$, then move $P_m$ inside this
ball. Since the boundary of the ball does not change we can  glue the new ball back into the surface.


For such a Riemannian manifold $(X,|df|^2)$ we define a reference manifold $(\mathring{X},\mathring{\bfm})$ which is obtained in the following way.
Take those $N$ sheets with cuts that correspond to $X,$ and in the gluing scheme of $X$, keep $\sigma_0$ and replace all the permutations $\sigma_{m'}$, $m'>0$ by
the identity. It can be easily seen that $(\mathring{X}, \mathring{\bfm})$ is the disjoint union of cones, those cones  correspond to the conical ends
of $X$ and the tip of each cone is now located above $z_0.$

The Laplacian $\Delta$ can be considered as a perturbation of the free Laplacian $\mathring\Delta:=\Delta^{\mathring{\bfm}}$ acting in $L^2(\mathring{X})$. The perturbation is basically reduced to the change of the domain of the unbounded operator:
when we make slits on $\mathring{X}$ and glue them according to a certain gluing scheme, it induces boundary conditions on the sides of the cuts.
The determinant of $\Delta$ will then be defined in terms of the relative zeta function~\eqref{111} as a regularized relative determinant $\det_\zeta (\Delta, \mathring\Delta)$.

The main goal of this work is to study the relative determinant $\det_\zeta (\Delta, \mathring\Delta)$
as a functional on the space $\Hcal(N, M)$.


\subsection{Relative Determinant and BFK gluing formula for negative energies}

Let $X$ be a compact Riemann surface and let $f$ be a nonconstant meromorphic function on $X$. Introduce the flat singular metric $\bfm=|df|^2$ on $X$.
As it is explained in the previous section, the flat singular Riemannian manifold $(X, \bfm)$ has conical points (at the zeros, $P_1, \dots, P_M$, of the differential $df$) and
conical ends of angle $2\pi k_j$ at the poles, $\infty_1, \dots, \infty_K$, of $f$, where $k_j$ is the order of the corresponding pole. Let $\Delta$ be the
(Friedrichs) Laplacian on $(X,\bfm)$.

Let $(\mathring{X},\mathring{\bfm})$ be the reference unperturbed manifold and  let $\mathring{\Delta}$ be the associated Laplace operator.
We recall that $(\mathring{X},\mathring{\bfm})= \bigcup_{j=1}^K (\C, |k_jy_j^{k_j-1} dy_j|^2).$

Since $(X,\bfm)$ and $(\mathring{X},\mathring{\bfm})$ are isometric outside a compact region the methods and results of \cite{Carron2} apply.

For $R>0$ large enough, there is a  subset $X_+(R)\subset X$ that is isometric to
\be\label{X+}
\cup_{j=1}^K\{y_j\in\Bbb C : |y_j|\geq R ^{1/k_j}\}\subset \mathring{X}.
\ee

\begin{definition}\label{def:Sigma}
We denote by $\Sigma_R$ the boundary of the region $X_+(R).$
It is the union of $K$ circles $\{y\in\Bbb C\,:|y|=  R^{1/k_j}\}$ on $X.$
\end{definition}

Note that $R$ will be chosen at the very beginning of construction and will then be fixed. In what follows we omit the reference to $R$ and simply write $\Sigma,~ X_+$.

We represent $X$ in the form
$$
X=X_-\cup_\Sigma X_+,
$$
where  $X_-= X\setminus (X_+\cup\Sigma)$.

Following \cite{Carron1} we first define the external Dirichlet-to-Neumann operator.
We consider each conical end  $\{y_j\in\Bbb C_j : |y_j|\geq R ^{1/k_j}\}$ of $X_+$ separately and omit the subscript $j$ for brevity of notation.
 We introduce the  coordinates $(r, \varphi)$ where $r=|y|^k\in [R,\infty)$  and  $\varphi=\arg y\in (-\pi,\pi]$. We have
$$
g= dr^2+k^2 r^2\,d\varphi^2,\quad \Delta=
r^{-2}\bigl((r\partial_r)^2+k^{-2}\partial_\varphi^2\bigr).
$$
Separation of variables  shows that for $\lambda\in\Bbb C\setminus\{0\}$ with $\Im\lambda\geq 0$ the exterior Dirichlet problem
\be\label{DPext}
(\Delta-\lambda^2)u(\lambda)=0\text { on } X_+,\quad u(\lambda)=f\text{ on } \Sigma,
\ee
has a unique solution of the form
$$
u(r,\varphi;\lambda)=\sum_{n=-\infty}^\infty C_n \frac {H^{(1)}_{\nu_n}(\lambda r)}{ H^{(1)}_{\nu_n}(\lambda R) }e^{in\varphi} ,\quad \nu_n=\frac{|n|}{kR},$$
where $f\in C^\infty(\Sigma)$, $C_n=(2\pi)^{-1}\int_{-\pi}^{\pi} f(\varphi) e^{-i n \varphi } \,d\varphi$, and $H^{(1)}_n$ is the Hankel function.
This solution is in $L^2(X_+)$ if $\Im\lambda>0$. If $\Im\lambda=0,$ it is the unique outgoing solution that satisfies the Sommerfeld radiation condition
$$\sqrt {r}\bigl( \partial_r u(\lambda)-i\lambda r u(\lambda)\bigr)\to 0\text { as } r\to\infty.$$

The external Dirichlet-to-Neumann operator on $\Sigma$ acts by the formula
\be\label{Next}
\mathcal N_+(\lambda) f =-\partial_r u(\lambda)\upharpoonright_{r=R}.
\ee
Thus $\psi_n(\varphi)=(\operatorname{Vol} \Sigma )^{-1/2}e^{in\varphi}$ are eigenfunctions of $\mathcal N_+(\lambda)$ with $\|\psi_n\|_{L^2(\Sigma)}=1$, and
\be\label{eig}
\mu_n(\lambda)=\mu_{-n}(\lambda)=-\frac {\partial_r H^{(1)}_{\nu_n}(\lambda r)\upharpoonright_{r=R}}{ H^{(1)}_{\nu_n}(\lambda R) }
\ee
are the corresponding eigenvalues (if  $\Im\lambda\geq 0$, $\lambda\neq 0$).

We can also consider $\Sigma$ and $X^+$ as subsets of $\mathring{X}$. Then  in the same manner we have $\mathring{X} = \mathring{X}_- \cup_\Sigma \mathring{X}_+.$

Let $\Delta^D_{\pm}$ be the Friedrichs extensions of the Dirichlet Laplacians in $L^2(X_{\pm})$.
We denote by $\Delta^D:= \Delta^D_- \oplus \Delta^D_+ $ the Friedrichs Laplace operator on $L^2(X)$ with
Dirichlet boundary condition on $\Sigma$. Similarly, we define $\mathring{\Delta}^D_{\pm}$ and $\mathring{\Delta}^D.$

The spectrum $\spec(\Delta_-^D)$ of the  positive self-adjoint operator $\Delta_-^D$ is discrete. For any $\lambda^2\in\Bbb C\setminus\sigma(\Delta_-^D)$ and $f\in H^1(\Sigma)$ there exists a unique solution $u(\lambda)\in H^{3/2}(X_-)$ to the Dirichlet problem
\begin{equation}\label{Din}
(\Delta-\lambda^2)u(\lambda)=0\text { on } X_-\setminus\Sigma, \quad u(\lambda)=f\text{ on } \Sigma,
\end{equation}
such that
\be
\label{u}
u(\lambda)=\tilde f- (\Delta_-^D-\lambda^2)^{-1}(\Delta-\lambda^2)\tilde f,
\ee
where $\tilde f\in H^{3/2}(X_-)$ is a continuation of $f$ and
$$(\Delta_-^D-\lambda^2)^{-1}: H^{-1/2}(X_-)\to H^{3/2}(X_-)$$
is a holomorphic function of $\lambda^2\in\Bbb C\setminus\sigma(\Delta_-^D)$; here $\|u; H^s(X_-)\|=\|(\Delta_-^D)^{s/2}u; L^2(X_-)\|$. The Dirichlet-to-Neumann operator  $\mathcal N_-(\lambda)$  on $\Sigma$ acts by the formula
$$\mathcal N_-(\lambda) f={\partial_r u(\lambda)}\upharpoonright_{r=R},$$
 where $f$ is the right hand side in~\eqref{Din} and $u(\lambda)$ is defined by~\eqref{u}. The function  $\lambda^2\mapsto \mathcal N_-(\lambda)\in \mathcal B(H^1(\Sigma), L^2(\Sigma)) $  is  holomorphic  in   $\Bbb C\setminus\sigma(\Delta_-^D)$; here and elsewhere $\mathcal B(\cal X,\cal Y) $ stands for the space  of bounded operators  from $\cal X$ to $\cal Y$.

Finally, we introduce the Neumann jump operator
  $$
  \mathcal N(\lambda)=\mathcal N_+(\lambda)+\mathcal N_-(\lambda),
  $$
 which is a first order elliptic classical pseudodifferential operator on $\Sigma$ with the principal symbol $2|\xi|$.

For $\lambda^2\leq 0$ the operator $\mathcal N(\lambda)$ is formally self-adjoint and nonnegative, it is positive if $\lambda^2<0$, and $\ker\mathcal N(0)=\{c\in\Bbb C\}$  (see e.g.~\cite[Sec. 3.3]{Carron2}  for details). (Note that in Theorem~\ref{M} the operator $\mathcal N(0)$ is denoted by  $\mathcal N$.)  Let $\lambda^2<0$. The function $\zeta(s)=\Tr\mathcal N(\lambda)^{-s}$ is holomorphic in $\{s\in\Bbb C: \Re s >1\}$ and admits a meromorphic continuation to $\Bbb C$ with no pole at zero; we set $\det_\zeta \mathcal N(\lambda)=e^{-\zeta'(0)}$.

It is known~(see \cite[Theorem 2.2]{Carron1}) that the difference
$$
(\Delta+1)^{-1}-(\Delta^D+1)^{-1}
$$ is in the trace class. By the Krein theorem, see e.g.~\cite[Chapter 8.9]{yafaev} or~\cite[Theorem 3.3]{Carron1}, there exists a spectral
shift function $\xi(\cdot\,;\,\Delta,\,\Delta^D)\in L^1(\Bbb R_+, (1+\lambda^2)^{-2}\lambda\,d\lambda)$
such that
\be\label{aux}
\Tr\bigl( (\Delta+1)^{-1}-(\Delta^D+1)^{-1}
\bigr)=-\int_0^\infty \xi(\lambda\,;\,\Delta,\,\Delta^D)(1+\lambda^2)^{-2}2\lambda\,d\lambda.
\ee
Moreover, the following representation is valid
\begin{equation}\label{Krein}
\Tr\bigl( e^{-t\Delta}-e^{-t\Delta^D}
\bigr)=-t\int_0^\infty e^{-t\lambda^2} \xi(\lambda\,;\,\Delta,\,\Delta^D)2\lambda\,d\lambda,
\end{equation}
which implies that  the left hand side in~\eqref{Krein} is absolutely bounded uniformly in $t>\epsilon>0$.
 The heat trace asymptotic
 \begin{equation}\label{t0}
 \Tr \bigl(e^{-t\Delta }- e^{-t\Delta^D}\bigr)\sim \sum_{j\geq- 2} a_j t^{j/2},\quad t\to 0+,
\end{equation}
can be obtained in a usual way, see e.g.~\cite[Lemma 4]{HKK}.
Thus for $\lambda^2<0$ the relative zeta function given by
$$
\zeta(s; \Delta-\lambda^2,\Delta^D-\lambda^2)=\frac 1 {\Gamma(s)}\int_0^\infty t^{s-1} e^{\lambda^2 t } \Tr (e^{-t\Delta }- e^{-t\Delta^D})\,dt
$$
is defined for $\Re s>1$ and continues meromorphically to the complex plane with no pole at $s=0$ by the usual argument. The relative determinant is defined to be
$$
\det_\zeta(\Delta-\lambda^2,\Delta^D-\lambda^2)=e^{-\zeta'(0; \Delta-\lambda^2,\Delta^D-\lambda^2)}.
$$
By~\cite[Theorem 4.2]{Carron1} we have the gluing formula
\be\label{sur1}
\det_\zeta(\Delta-\lambda^2,\Delta^D-\lambda^2)=\det_\zeta \mathcal N(\lambda),\quad \lambda^2<0.
\ee
(Although only smooth manifolds are considered in~\cite{Carron1}, it is fairly straightforward to see that the argument in~\cite{Carron1} remains valid for~\eqref{sur1} as far as we consider only Friederichs extensions and there are no conical points on $\Sigma$.)

All the constructions above can also be done for $(\mathring{X},\mathring{\bfm})$. Thus  similarly to~\eqref{sur1} we have
\be\label{sur2}
\det_\zeta(\mathring\Delta-\lambda^2,\mathring\Delta^D-\lambda^2)=\det \mathring{\mathcal N}_\zeta(\lambda),\quad \lambda^2<0.
\ee

Observe that since all operators can be seen as acting on $L^2(\mathring{X})$ we have
\begin{equation*}
\begin{split}
e^{-t\Delta}-e^{-t\mathring{\Delta}}& = \left( e^{-t\Delta}-e^{-t\Delta^D}\right) - \left(e^{-t\mathring{\Delta}}-e^{-t\mathring{\Delta}^D}\right ) +\left( e^{-t\Delta^D}-e^{-t\mathring{\Delta}^D}\right) \\
&= \left( e^{-t\Delta}-e^{-t\Delta^D}\right) - \left(e^{-t\mathring{\Delta}}-e^{-t\mathring{\Delta}^D}\right ) +\left( e^{-t\Delta^D_-}-e^{-t\mathring{\Delta}^D_-}\right), \\
\end{split}
\end{equation*}
where for the last line we have used that $\Delta^D=\Delta^D_-\oplus \Delta_+^D,~\mathring{\Delta}^D=\mathring{\Delta}^D_-\oplus \Delta_+^D$ since
$X_+$ and $\mathring{X}_+$ are isometric.

It follows that we can take the trace of both sides and thus define the following relative zeta function for $\Re s>1$
$$
\zeta(s;\Delta-\lambda^2,\mathring\Delta -\lambda^2)=\frac 1 {\Gamma(s)}\int_0^\infty t^{s-1} e^{\lambda^2 t }  \Tr (e^{-t\Delta }- e^{-t\mathring\Delta})\,dt,\quad \lambda^2<0.
$$
Moreover,  we obtain the relation
\begin{equation*}
\begin{split}
\zeta(s;\Delta-\lambda^2,\mathring\Delta -\lambda^2)\,=&\,\zeta(s;\Delta-\lambda^2,\mathring\Delta^D -\lambda^2)\,
-\,\zeta(s;\mathring{\Delta}-\lambda^2,\mathring{\Delta}^D -\lambda^2)\\
& ~~ \,+\,\zeta(s,\Delta^D_--\lambda^2)\,-\,\zeta(s,\mathring{\Delta}^D_--\lambda^2).
\end{split}
\end{equation*}
All the functions continue meromorphically to the complex plane with no pole at $0.$
Passing to the determinant, we obtain
$$
\frac {\det_\zeta(\Delta-\lambda^2,\Delta^D-\lambda^2)}{\det_\zeta(\mathring\Delta-\lambda^2,\mathring\Delta^D-\lambda^2)}=\frac{\det_\zeta(\Delta-\lambda^2,\mathring\Delta -\lambda^2)\det_\zeta(\mathring\Delta^D_--\lambda^2)}{\det_\zeta(\Delta_-^D-\lambda^2)}.
$$

Thus dividing~\eqref{sur1} by~\eqref{sur2} we obtain
\begin{equation}\label{sg1}
\frac{\det_\zeta(\Delta-\lambda^2,\mathring\Delta -\lambda^2)\det_\zeta(\mathring\Delta^D_--\lambda^2)}{\det_\zeta(\Delta_-^D-\lambda^2)}=\frac{\det_\zeta\mathcal N(\lambda)}{\det_\zeta\mathring{\mathcal N}(\lambda)},\quad\lambda^2<0,
\end{equation}
 where $\mathring {\mathcal N}(\lambda)$ and $\mathring\Delta^D_-$ are moduli independent.

In order to take the limit $\lambda^2\to 0 -$ in~\eqref{sg1}, we will need the asymptotic behavior of all the ingredients
in the latter equation. We start with $\det_\zeta \mathcal N(\lambda).$

 \subsection{Asymptotic of $\det_\zeta \mathcal N(\lambda)$ as $|\lambda|\to 0+$, $\Im\lambda\geq 0$}
In this section we follow~\cite{Carron2}, where a similar problem is studied. Our purpose here is to refine the asymptotic of  $\det_\zeta \mathcal N(\lambda)$ from~\cite{Carron2} as needed for the proof of our gluing formula.

First we need to understand the behavior of the internal and external Dirichlet-to-Neumann operators.
Since the internal Dirichlet Laplacian $\Delta^D_-$ is positive, there is no problem in letting $\lambda$
go to $0$ in the definition of $\Ncal_-(\lambda).$

Concerning the external Dirichlet-to-Neumann operator, by separation of variables in each conical end,
we see that in the case $\lambda=0$ the exterior Dirichlet problem~\eqref{DPext} has a unique solution of the form
\be\label{oo}
u(r, \varphi;0)=\sum_{n=-\infty}^\infty C_n\Bigl(\frac{R}{r}\Bigr)^{\nu_n}e^{in\varphi},
\ee
where $f= \sum C_n e^{in\varphi}$. We recall that $\nu_n= \frac{|n|}{kR}$ and $\psi_n(\phi)= (2\pi)^{-\frac{1}{2}}e^{in\phi}$.
The external Dirichlet-to-Neumann $\Ncal_+(0)$ is obtained by applying $-\partial_r$ to this solution.Clearly, $\{|n|/(kR^2),\psi_n\}_{n=-\infty}^\infty $ is a complete set of the eigenvalues and orthonormal eigenfunctions of the operator $\mathcal N_+(0)$.

\begin{Remark} Note that thanks to the special choice of the lower bound on $y$ in~\eqref{X+} the eigenvalue $\mu_0(\lambda)$ of $\mathcal N_+(\lambda)$ corresponding to the constant eigenfunction  $\psi_0$ does not depend on $k$. This will be important  in our proof of the BFK gluing formula in the case $K\geq 1$ if $k_j\neq k_i$ for some $i,j=1,\dots,K$.
 \end{Remark}

It is convenient to present the argument in the case where $K=1$ so that $X$ has only one conical end.
We will explain afterwards how the proof is modified for $K>1.$

\subsubsection{The case $K=1$}

 In the series~\eqref{oo} only the terms with $\nu_{n}>1$ are in $L^{2}(X_+)$. As a result,  in a neighborhood of zero,  properties of $\lambda\mapsto\mathcal N_+(\lambda)$ on the eigenspaces of $\mathcal N_+(0)$ corresponding to the eigenvalues $|n|/(kR^2)>1/R$ and $|n|/(kR^2)\leq 1/R$ are essentially different.
 Consider the spectral projector ${\bf P}=\sum_{0\leq n\leq k R} P_n$ of $\mathcal N_+(0)$ on the interval $[0,1/R]$; here
$$
P_0=(\cdot,\psi_0)_{L^2(\Sigma)};\quad P_n= (\cdot, \psi_n)_{L^2(\Sigma)}\psi_n+(\cdot, \psi_{-n})_{L^2(\Sigma)}\psi_{-n}.
$$
\begin{lem}[see {\cite[Prop 4.5]{Carron2}}]   \label{L1}
 We have
 $$
\mathcal N_+(\lambda)\bigl(\Id-{\bf P}\bigr)=\Psi(\lambda^2)+\mathcal L(\lambda^2),
$$
where $\Psi(z)$ is an elliptic pseudo-differential operator of order $1$ which is a holomorphic function of $z$ in a neighbourhood of zero   and $\mathcal L(z)$ is an operator with smooth integral kernel which is a $C^1$
function of $z$ in a neighbourhood of zero with $\Im z\geq 0$.
\end{lem}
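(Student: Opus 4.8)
The plan is to work end by end. Since $K=1$ there is a single conical end isometric to $\{r\geq R\}$ with metric $dr^2+k^2r^2\,d\varphi^2$, and the external Dirichlet-to-Neumann operator $\mathcal N_+(\lambda)$ is diagonalized by the Fourier basis $\psi_n(\varphi)=(2\pi)^{-1/2}e^{in\varphi}$ with eigenvalues $\mu_n(\lambda)=-\partial_r H^{(1)}_{\nu_n}(\lambda r)|_{r=R}\big/H^{(1)}_{\nu_n}(\lambda R)$, $\nu_n=|n|/(kR)$. The restriction $\mathcal N_+(\lambda)(\Id-{\bf P})$ acts only on the modes with $\nu_n>1/R$, equivalently $|n|>kR$. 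The idea is that for these ``high'' modes the order $\nu_n$ of the Hankel function stays bounded away from $1$, so the eigenvalues $\mu_n(\lambda)$ extend analytically in $\lambda^2$ across $\lambda=0$ \emph{uniformly} in $n$ after subtracting their leading behaviour in $n$; the dangerous nonanalytic dependence on $\lambda$ (powers $\lambda^{2\nu_n}$, logarithms when $\nu_n\in\Z$) only occurs for the finitely many low modes killed by ${\bf P}$, plus it degenerates precisely at $\nu_n=1$, which is why the cutoff is placed at the eigenvalue $1/R$.

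The key steps, in order. First, recall the expansion of $H^{(1)}_\nu(z)$ for small $z$: writing $H^{(1)}_\nu=J_\nu+iY_\nu$ and using the series $J_\nu(z)=\sum_{k\geq 0}\frac{(-1)^k}{k!\,\Gamma(\nu+k+1)}(z/2)^{\nu+2k}$ together with $Y_\nu=(J_\nu\cos\nu\pi-J_{-\nu})/\sin\nu\pi$, one gets for non-integer $\nu$
$$
\mu_n(\lambda)=\frac{\nu_n}{R}\cdot\frac{1+a(\nu_n,\lambda^2)+(\lambda R/2)^{2\nu_n}b(\nu_n,\lambda^2)}{1+\tilde a(\nu_n,\lambda^2)+(\lambda R/2)^{2\nu_n}\tilde b(\nu_n,\lambda^2)},
$$
where $a,b,\tilde a,\tilde b$ are power series in $\lambda^2$ whose coefficients are rational in $\nu_n$ and $O(1/\nu_n)$ as $\nu_n\to\infty$ (uniformly for $\lambda^2$ in a fixed small disc). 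The crucial gain is that on the range $\nu_n>1/R$, the term $(\lambda R/2)^{2\nu_n}$ and its coefficient decay geometrically in $n$ and are $C^1$ in $z=\lambda^2$ for $\Im z\geq 0$ (the only obstruction to holomorphy being the branch of $z^{\nu_n}$, but $\nu_n\geq 1/R>0$ so $z\mapsto z^{\nu_n}$ is $C^1$ up to $z=0$ including the real axis, since there $2\nu_n>1$ is not needed — one only needs $\nu_n>0$ for continuity and $2\nu_n>1$ for $C^1$, and here $2\nu_n>2/R$; shrinking $R$ if necessary, or noting $2\nu_n\geq 2$ on the complement of $\bf P$ when $kR\geq 1$, gives the $C^1$ claim cleanly). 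Second, define $\Psi(z)$ to be the operator which on $\psi_n$, $|n|>kR$, acts by $\frac{\nu_n}{R}\cdot\frac{1+a(\nu_n,z)}{1+\tilde a(\nu_n,z)}$ (the ``Hankel-order-independent'', i.e. $\lambda^{2\nu_n}$-free, part) and by $0$ on the range of ${\bf P}$; the symbol estimates on $a,\tilde a$ show $\Psi(z)$ is an elliptic classical $\Psi$DO of order $1$ with principal symbol $|\xi|$ on $(\Id-{\bf P})L^2(\Sigma)$, holomorphic in $z$ near $0$ because each $a,\tilde a$ is a convergent power series in $z$ with uniformly controlled coefficients (so the series for $\Psi(z)$ converges in the appropriate $\Psi$DO topology). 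Third, set $\mathcal L(z)=\mathcal N_+(\lambda)(\Id-\bf P)-\Psi(z)$, which on $\psi_n$ equals $\frac{\nu_n}{R}$ times a quantity bounded by $C(\lambda R/2)^{2\nu_n}\lesssim C\rho^{|n|}$ for some $\rho<1$ on the small disc; hence the integral kernel $\sum_{|n|>kR}(\mathcal L(z))_{nn}\psi_n(\varphi)\overline{\psi_n(\varphi')}$ converges with all $\varphi,\varphi'$-derivatives, i.e. $\mathcal L(z)$ is smoothing, and its coefficients are $C^1$ in $z$ (up to $\Im z=0$) by the $C^1$-property of $z\mapsto z^{\nu_n}$ with geometric control. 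Fourth, for the finitely many integer values of $\nu_n$ in the high range (if any occur, i.e. $kR\in\Z$ and $|n|$ a multiple of $kR$) one replaces the $(\lambda R/2)^{2\nu_n}$ term by $(\lambda R/2)^{2\nu_n}\log(\lambda R)$ times an analytic factor; this is still $C^1$ for $\Im z\geq 0$ since $2\nu_n\geq 2$, and still geometrically small in $n$, so it is absorbed into $\mathcal L(z)$ without changing anything.

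The main obstacle is the uniformity: one must control the small-$\lambda$ Hankel asymptotics simultaneously for all orders $\nu_n\in[1/R,\infty)$, showing both that the remainder after extracting the analytic part is geometrically small in $n$ (so that it assembles into a smoothing operator rather than merely a bounded one) and that the analytic part $1+a(\nu,z)$ over $1+\tilde a(\nu,z)$ has the right symbolic expansion in $\nu$ to be a classical order-$1$ $\Psi$DO. This is essentially a careful bookkeeping of the Bessel series combined with the fact that $H^{(1)}_\nu(\lambda R)$ has no zeros for $\Im\lambda\geq0$, $\lambda\neq0$ (so no spurious poles in $\mu_n$), plus the large-$\nu$ uniform asymptotics of $H^{(1)}_\nu(\nu\zeta)$ to get the symbol in the regime $|n|\gg\lambda$. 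Everything else is routine; this is the content of \cite[Prop.~4.5]{Carron2}, which we invoke, adding only the remark that in our potential-free setting the decomposition is as clean as stated.
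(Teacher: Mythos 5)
The paper does not actually prove this lemma: it is quoted verbatim from \cite[Prop.~4.5]{Carron2}, whose argument is a parametrix/resolvent-expansion argument valid for general (perturbed) conical ends. Your explicit Fourier--Bessel route, exploiting that in this potential-free setting $\mathcal N_+(\lambda)$ is exactly diagonal in the $\psi_n$ basis, is a legitimate alternative in principle, and the overall decomposition (analytic-in-$\lambda^2$ part of each $\mu_n(\lambda)$ assembling into $\Psi$, the $\lambda^{2\nu_n}$-part assembling into a smoothing $\mathcal L$) is the right idea. Since you end by invoking Carron anyway, the sketch is at worst redundant; but as a standalone proof it has two problems.

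First, a concrete error: the modes surviving $\Id-{\bf P}$ are those whose $\mathcal N_+(0)$-eigenvalue exceeds $1/R$, i.e.\ $|n|>kR$, i.e.\ $\nu_n>1$ --- precisely the modes whose decaying solutions $r^{-\nu_n}$ lie in $L^2(X_+)$. Your condition ``$\nu_n>1/R$'' is not equivalent to ``$|n|>kR$'' and is the wrong threshold; it propagates into the muddled discussion of why $z\mapsto z^{\nu_n}$ is $C^1$ at $z=0$, including the spurious suggestion to ``shrink $R$''. With the correct threshold $\nu_n>1$, the derivative $\nu_n z^{\nu_n-1}$ is continuous up to $z=0$ with no extra hypotheses --- this is exactly why the spectral cutoff is placed at $1/R$. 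Second, a genuine gap at the point you yourself identify as the main obstacle: writing $H^{(1)}_\nu$ through $J_{\pm\nu}$ produces coefficients $b,\tilde b$ containing $\Gamma(1-\nu)/\Gamma(1+\nu)$-type factors, which blow up not only at integer $\nu$ (which you treat) but also as $\nu_n$ merely approaches positive integers; the claim that the coefficients are ``$O(1/\nu_n)$ uniformly'' is false as stated. One must check that
$(\lambda R/2)^{2\nu}\,\Gamma(1-\nu)/\Gamma(1+\nu)=\pi(\lambda R/2)^{2\nu}\big/\bigl(\sin(\pi\nu)\,\Gamma(\nu)\Gamma(\nu+1)\bigr)$
stays geometrically small in $n$, which requires pairing the super-exponential decay of $1/\Gamma(\nu_n)^2$ against a lower bound on $|\sin\pi\nu_n|$ (or, better, working with the $\nu$-entire function $H^{(1)}_\nu$ directly rather than the singular $J_{\pm\nu}$ splitting). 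This uniformity is asserted, not proved, and it is the entire content of the lemma beyond routine bookkeeping.
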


 Recall that the eigenfunctions $\psi_{n}$ of $\mathcal N_+(\lambda)$ do not depend on $\lambda$ and we have
 $$
 \mathcal N_+(\lambda) {\bf P}f=\sum_{0\leq n\leq k R} \mu_{n}(\lambda)P_n
 $$
where  the $\mu_n$ have been defined in \eqref{eig}.

 The eigenvalues of $\mathcal N_+(0)$ on $[0,1/R]$ are the limits of
 $\mu_n(\lambda)$. As $|\lambda|\to 0+$, $\Im\lambda\geq 0$,  the
 formula \eqref{eig} and properties of the Hankel functions
(see \cite{AS}) imply that
\be\label{fe}
\mu_0(\lambda)=-\frac {1}{R\ln \lambda}\Bigl(1-\Bigl(\ln\frac {R} {2} +\frac{\pi\gamma} {2}-i\frac {\pi }{2}\Bigr)\frac{1}{\ln\lambda} +O\Bigl(\frac {1}{(\ln\lambda)^{2}}\Bigr)\Bigr),
\ee
where $\gamma$ is the Euler's constant, and
\be\label{star2}
\mu_{n}(\lambda)=|n|/(kR^2)+O(\lambda^{\epsilon}),\quad 0<|n|<k{R},
\ee
with some $\epsilon>0$.

We show the following proposition.

\begin{prop}\label{th1}
Assume $(X,|df|^2)$ has only one conical end then, for any $R$ large enough we have,
as  $|\lambda| \to 0+$, $\Im\lambda\geq 0$,
\begin{equation*}
\det_\zeta\mathcal N(\lambda)=-\frac {1}{R\ln \lambda}\det^*_\zeta\mathcal N(0)\Bigl(1-\Bigl(\ln\frac {R} {2} +\frac{\pi\gamma} {2}-i\frac {\pi }{2}\Bigr)\frac{1}{\ln\lambda} +O\Bigl(\frac {1}{(\ln\lambda)^{2}}\Bigr)\Bigr),
\end{equation*}
where $\det_\zeta\mathcal N(\lambda)$  is the zeta regularized determinant of $\mathcal N(\lambda)$,  and $\det^*_\zeta\mathcal N(0)$ is  the zeta regularized determinant of $\mathcal N(0)$ with zero eigenvalue excluded.
\end{prop}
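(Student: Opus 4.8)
The plan is to isolate inside $\mathcal N(\lambda)$ the single eigenmode $\psi_0=(\operatorname{Vol}\Sigma)^{-1/2}$ that carries the logarithmic behaviour, and to show that everything else contributes $\det^*_\zeta\mathcal N(0)$ up to a power‑small error. Put $Q=\Id-P_0$ and decompose $L^2(\Sigma)=\C\psi_0\oplus\operatorname{ran}Q$. In the corresponding block form write
$$
\mathcal N(\lambda)=\begin{pmatrix}a(\lambda)&b(\lambda)\\ c(\lambda)&D(\lambda)\end{pmatrix},\qquad a(\lambda)=\langle\mathcal N(\lambda)\psi_0,\psi_0\rangle,\quad D(\lambda)=Q\mathcal N(\lambda)Q.
$$
Since $\psi_0$ is an eigenfunction of $\mathcal N_+(\lambda)$ with eigenvalue $\mu_0(\lambda)$, the operator $\mathcal N_+(\lambda)$ is block‑diagonal, so $b(\lambda)=P_0\mathcal N_-(\lambda)Q$, $c(\lambda)=Q\mathcal N_-(\lambda)P_0$ and $a(\lambda)=\mu_0(\lambda)+\langle\mathcal N_-(\lambda)\psi_0,\psi_0\rangle$. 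By Lemma~\ref{L1} and~\eqref{star2}, and using that $\lambda^2\mapsto\mathcal N_-(\lambda)$ is holomorphic near $0$ (here $0\notin\sigma(\Delta^D_-)$ since $\Delta^D_-$ is positive), all the entries extend to $\lambda=0$. Moreover $D(0)=Q\mathcal N(0)Q$ is the restriction of $\mathcal N(0)$ to $\operatorname{ran}Q=(\ker\mathcal N(0))^\perp$, and since $\ker\mathcal N(0)=\C\psi_0$ this is an invertible elliptic pseudodifferential operator of order one on $\operatorname{ran}Q$ with $\det_\zeta D(0)=\det^*_\zeta\mathcal N(0)$.

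The crucial observation is that the off‑diagonal blocks, and the deviation of $a$ from $\mu_0$, are $O(\lambda^2)$ rather than merely $O(1)$. Indeed, $\mathcal N_+(0)\psi_0=\mu_0(0)\psi_0=0$, so from $\ker\mathcal N(0)=\C\psi_0$ we get $\mathcal N_-(0)\psi_0=0$; since $\mathcal N_-(0)$ is (formally) self‑adjoint this forces $b(0)=0$, $c(0)=0$ and $\langle\mathcal N_-(0)\psi_0,\psi_0\rangle=0$. Combining with the holomorphy of $\mathcal N_-(\lambda)$ in $\lambda^2$ near $0$ and with~\eqref{star2} for the finite‑rank terms $\mu_n(\lambda)-|n|/(kR^2)$, $0<|n|\le kR$, inside $Q\mathcal N_+(\lambda)Q$, we obtain, for some $\epsilon>0$,
$$
a(\lambda)=\mu_0(\lambda)+O(\lambda^2),\quad b(\lambda)=O(\lambda^2),\quad c(\lambda)=O(\lambda^2),\quad D(\lambda)=D(0)+O(\lambda^{\epsilon}),
$$
and in particular $D(\lambda)$ stays invertible for $|\lambda|$ small.

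For such $\lambda$ (with $\lambda\neq0$, $\Im\lambda\ge0$) one then uses the Schur factorization, with $s(\lambda)=a(\lambda)-b(\lambda)D(\lambda)^{-1}c(\lambda)$,
$$
\mathcal N(\lambda)=\begin{pmatrix}\Id&b(\lambda)D(\lambda)^{-1}\\ 0&\Id\end{pmatrix}\begin{pmatrix}s(\lambda)&0\\ 0&D(\lambda)\end{pmatrix}\begin{pmatrix}\Id&0\\ D(\lambda)^{-1}c(\lambda)&\Id\end{pmatrix}.
$$
The two triangular factors are of the form $\Id+(\text{nilpotent finite rank})$, hence have Fredholm determinant $1$; since $\det_\zeta(A(\Id+K))=\det_\zeta A\cdot\det_{\mathrm{Fr}}(\Id+K)$ and $\det_\zeta((\Id+K)A)=\det_{\mathrm{Fr}}(\Id+K)\cdot\det_\zeta A$ for an invertible elliptic pseudodifferential operator $A$ of positive order and trace‑class $K$ (the multiplicative anomaly vanishes, as $\partial_t\ln\det_\zeta(A(\Id+tK))=\Tr((\Id+tK)^{-1}K)$ and the Wodzicki residue of a trace‑class operator is zero), and since the $\zeta$‑function of the block‑diagonal operator $\operatorname{diag}(s(\lambda),D(\lambda))$ splits as a sum, giving $\det_\zeta\operatorname{diag}(s(\lambda),D(\lambda))=s(\lambda)\det_\zeta D(\lambda)$, one concludes
$$
\det_\zeta\mathcal N(\lambda)=s(\lambda)\,\det_\zeta D(\lambda).
$$

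To finish, $s(\lambda)=a(\lambda)-b(\lambda)D(\lambda)^{-1}c(\lambda)=\mu_0(\lambda)+O(\lambda^2)$, while by continuity of the zeta‑regularized determinant along the family $D(\lambda)$ (as in~\cite{Carron2}) one has $\det_\zeta D(\lambda)=\det^*_\zeta\mathcal N(0)(1+O(\lambda^{\epsilon}))$. Substituting the Hankel asymptotic~\eqref{fe} for $\mu_0(\lambda)$ and using that $\lambda^2$ and $\lambda^{\epsilon}$ are $o((\ln\lambda)^{-m})$ for every $m$ — so that they are absorbed into the $O((\ln\lambda)^{-2})$ remainder once the factor $-1/(R\ln\lambda)$ is pulled out — yields the claimed expansion. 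The main obstacle is the second step: proving $\mathcal N_-(0)\psi_0=0$ exactly, and therefore that the coupling of $\psi_0$ to $\operatorname{ran}Q$ and the correction $a(\lambda)-\mu_0(\lambda)$ are genuinely $O(\lambda^2)$; this is precisely what forces the constant to be exactly $\mu_0(\lambda)$ times $\det^*_\zeta\mathcal N(0)$. The continuity with a power rate of $\det_\zeta D(\lambda)$ as $\lambda\to0$ is routine within the framework of~\cite{Carron2} but also requires care.
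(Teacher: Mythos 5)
Your proposal is correct and rests on exactly the same pillars as the paper's proof: the identity $\mathcal N_-(0)P_0=P_0\mathcal N_-(0)=0$ (forced by self-adjointness and $\ker\mathcal N(0)=\C\psi_0$), the holomorphy of $\lambda^2\mapsto\mathcal N_-(\lambda)$ near $0$ giving the $O(\lambda^2)$ decoupling of the constant mode, the Hankel asymptotics \eqref{fe} for $\mu_0(\lambda)$, and the multiplicativity of $\det_\zeta$ under $\Id$-plus-trace-class factors. The only organizational difference is that you use an exact $2\times 2$ Schur factorization isolating $\C\psi_0$, whereas the paper uses a $3\times 3$ splitting $\mathcal P_0\oplus({\bf P}-P_0)\oplus(\Id-{\bf P})$; the extra block is there precisely because the non-constant modes with $0<|n|\leq kR$ are only H\"older continuous in $\lambda$ (see \eqref{star2}), so they cannot be controlled by the trace formula $\partial_\lambda\ln\det_\zeta=\Tr(\mathcal N_{2,2}^{-1}\partial_\lambda\mathcal N_{2,2})$ and are instead handled through the Fredholm-determinant Lipschitz estimate \eqref{13}. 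In your version this issue is buried inside the claim $\det_\zeta D(\lambda)=\det^*_\zeta\mathcal N(0)\bigl(1+O(\lambda^{\epsilon})\bigr)$, which you rightly flag as needing care: to prove it you would have to reintroduce essentially the paper's splitting of $D(\lambda)$ into its $C^1$ infinite-dimensional part and the finitely many H\"older modes, so nothing is ultimately saved there, while your Schur complement does give a slightly cleaner extraction of the factor $\mu_0(\lambda)+O(\lambda^2)$.
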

\begin{proof}


 Due to the representation $L^2( \Sigma)= \ker P_0\oplus \ker ({\bf P}-P_0)\oplus \ker (\Id-{\bf P})$ we have
\be\label{ee}
\mathcal N(\lambda)=
\left(
\begin{array}{ccc}
\mathcal  N_{0,0} (\lambda) & \mathcal N_{0,1}(\lambda)&  \mathcal N_{0,2}(\lambda) \\
\mathcal  N_{1,0} (\lambda) & \mathcal N_{1,1}(\lambda)&  \mathcal N_{1,2}(\lambda) \\
\mathcal  N_{2,0} (\lambda) & \mathcal N_{2,1}(\lambda)&  \mathcal N_{2,2}(\lambda) \\
\end{array}
\right),
\ee
where $\mathcal N_{i,j}(\lambda)=\mathcal P_i\mathcal N(\lambda) \mathcal P_j$ with $\mathcal P_0=P_0$, $\mathcal P_1={\bf P}-P_0$, and $\mathcal P_2=\Id-{\bf P}$.

The operator $\mathcal N_{2,2}(0)$
is invertible and therefore
$
\det_\zeta \mathcal N_{2,2}(0)\neq 0
$.
Note that $\mathcal N_{2,2}(\lambda)=\mathcal P_2\mathcal N_-(\lambda)\mathcal P_2+ \mathcal P_2 \mathcal N_+(\lambda)\mathcal P_2$, where $\mathcal N_+(\lambda)\mathcal P_2=\mathcal N_+(\lambda)(\Id-{\bf P})$  is the same as in Lemma~\ref{L1} and $\mathcal N_-(\lambda)$ is a holomorphic function of $\lambda^2$ in a small neighbourhood of zero.
 This implies that
 \be\label{11}
 \det_\zeta \mathcal N_{2,2}(\lambda)- \det_\zeta  \mathcal N_{2,2}(0)=o(1), \quad|\lambda| \to 0+, \Im\lambda\geq 0.
 \ee

 Thanks to Lemma~\ref{L1} we also have
 \be\label{ps}
 \begin{aligned}
 \|\mathcal N_{2,2}(\lambda)-\mathcal N_{2,2}(0); \mathcal B(H^{1}(\Sigma),L^2(\Sigma))\|=O(\lambda^2),\\
 \|\partial_\lambda\mathcal N_{2,2}(\lambda)-\partial_\lambda\mathcal N_{2,2}(0); \mathcal B(H^{1}(\Sigma),L^2(\Sigma))\|=O(\lambda).
 \end{aligned}
 \ee

 In order to refine~\eqref{11},  we estimate the absolute value of $\partial_\lambda\ln\det_\zeta  \mathcal N_{2,2}(\lambda)$.
Since $\partial_\lambda \mathcal N_{2,2}(\lambda)$ and $\mathcal N_{2,2}^{-1}(\lambda)$ are pseudodifferential operators of order $-1$,  the operator $$\mathcal N_{2,2}^{-1}(\lambda)\partial_\lambda \mathcal N_{2,2}(\lambda)$$ is in the trace class, and hence
\begin{equation}\label{*}
\begin{aligned}
\partial_\lambda\ln\det_\zeta  \mathcal N_{2,2}(\lambda)=\Tr\bigl \{\mathcal N_{2,2}^{-1}(\lambda)\partial_\lambda \mathcal N_{2,2}(\lambda)\bigr\},
\end{aligned}\end{equation}
see~\cite{BFK, Forman}. The first estimate in~\eqref{ps} and the Neumann series for $\mathcal N^{-1}_{2,2}(\lambda)$ give
\be\label{-1}
\begin{aligned}
\mathcal N_{2,2}^{-1}(\lambda) =\bigl(\Id+L(\lambda)\bigr)\mathcal N_{2,2}^{-1}(0),\quad
\|L(\lambda); \mathcal B(H^{1}(\Sigma))\|=O(\lambda^2);\\
\mathcal N_{2,2}^{-1}(\lambda) =\mathcal N_{2,2}^{-1}(0)  \bigl(\Id+R(\lambda)\bigr),\quad
\|R(\lambda); \mathcal B(L^{2}(\Sigma))\|=O(\lambda^2).
\end{aligned}
\ee
As a consequence of~\eqref{*},~\eqref{-1}, and~\eqref{ps} we get
$$
\begin{aligned}
|\partial_\lambda & \ln\det_\zeta \mathcal N_{2,2}(\lambda)|=|\Tr\bigl \{\mathcal N_{2,2}^{-1}(\lambda)\partial_\lambda \mathcal N_{2,2}(\lambda)\bigr\}|\\
& \leq \|  \mathcal N_{2,2}(\lambda) (\Id+L(\lambda))\mathcal  N_{2,2}^{-2}(0)(\Id+R(\lambda))\partial_\lambda \mathcal N_{2,2}(\lambda) \|_1\\
& \leq\|\mathcal N_{2,2}(\lambda);\mathcal B(H^{1}(\Sigma),L^2(\Sigma))\|  \|\Id+L(\lambda); \mathcal B(H^{1}(\Sigma))\|\|\mathcal N_{2,2}^{-2}(0)\|_1
 \\& \times\|\Id-R(\lambda); \mathcal B(L^2(\Sigma))\|  \|\partial_\lambda \mathcal N_{2,2}(\lambda); \mathcal B(H^{1}(\Sigma),L^2(\Sigma))\| =O(1),
\end{aligned}
$$
where $\|\cdot\|_1$ is the trace norm. This together with~\eqref{11} implies $|\partial_\lambda \det_\zeta  \mathcal N_{2,2}(\lambda) |=O(1)$. Now, as a refinement of~\eqref{11}, we obtain
$$
\det_\zeta \mathcal N_{2,2}(\lambda)-\det_\zeta \mathcal N_{2,2}(0)
=O(\lambda).
$$
This together with~\eqref{ee} implies
\be\label{12}
\begin{aligned}
\det_\zeta  &  \mathcal N(\lambda)=\detF\left(
\begin{array}{ccc}
 \mathcal N_{0,0}(\lambda) & \mathcal N_{0,1}(\lambda) &   \mathcal N_{0,2}(\lambda)  \mathcal N_{2,2}(\lambda)^{-1}\\
 \mathcal N_{1,0}(\lambda) & \mathcal N_{1,1}(\lambda) &   \mathcal N_{1,2}(\lambda)  \mathcal N_{2,2}(\lambda)^{-1}\\
 \mathcal N_{2,0}(\lambda) & \mathcal N_{2,1}(\lambda) &  \Id
 \end{array}
\right)\det_\zeta\left(
\begin{array}{ccc}
 \Id & 0 &   0 \\
  0&  \Id&0 \\
  0& 0 & \mathcal N_{2,2}(\lambda)
\end{array}
\right)
\\
&=\detF\left(
\begin{array}{ccc}
 \mathcal N_{0,0}(\lambda) & \mathcal N_{0,1}(\lambda) &   \mathcal N_{0,2}(\lambda)  \mathcal N_{2,2}(\lambda)^{-1}\\
 \mathcal N_{1,0}(\lambda) & \mathcal N_{1,1}(\lambda) &   \mathcal N_{1,2}(\lambda)  \mathcal N_{2,2}(\lambda)^{-1}\\
 \mathcal N_{2,0}(\lambda) & \mathcal N_{2,1}(\lambda) &  \Id
 \end{array}
\right)\bigl(\det_\zeta \mathcal N_{2,2}(0)\bigr)
\bigl(1+O(\lambda)
\bigr);
\end{aligned}
\ee
see~\cite{KV} for the first equality. On the next step we rely on the estimate
\be\label{13}
|\detF(\Id +A)-\detF(\Id+B)|\leq \|A-B\|_1 e^{\|A\|_1+\|B\|_1+1},
\ee
see~\cite[f-la (3.7), and references therein]{Simon}, for
$$
\Id+A=\left(
\begin{array}{ccc}
 \mathcal N_{0,0}(\lambda) & 0 &  0\\
 0 & \mathcal N_{1,1}(0) &  \mathcal N_{1,2}(0)\mathcal N_{2,2}^{-1}(0)\\
0 &\mathcal N_{2,1}(0) &  \Id
 \end{array}
\right),
$$
$$
 \Id+B=\left(\begin{array}{ccc}
 \mathcal N_{0,0}(\lambda) & \mathcal N_{0,1}(\lambda) &   \mathcal N_{0,2}(\lambda)  \mathcal N_{2,2}(\lambda)^{-1}\\
 \mathcal N_{1,0}(\lambda) & \mathcal N_{1,1}(\lambda) &   \mathcal N_{1,2}(\lambda)  \mathcal N_{2,2}(\lambda)^{-1}\\
 \mathcal N_{2,0}(\lambda) & \mathcal N_{2,1}(\lambda) &  \Id
 \end{array}
\right).
$$
Since $\mathcal N_-(0)$ is a selfadjoint operator in $L^2(\Sigma)$ and $\ker \mathcal N_-(0)=\{c\in\Bbb C\}$, we have $\mathcal N_-(0)P_0=P_0\mathcal N_-(0)=0$. Then thanks to
$$P_i\bigl(\mathcal N(\lambda)-\mathcal N(0)\bigr)P_j=\delta_{ij }\bigl(\mu_j(\lambda)-\mu_j(0)\bigr)+P_i\bigl(\mathcal N_-(\lambda)-\mathcal N_-(0)\bigr)P_j,\quad i,j\in[0,1/R],$$
where $\delta_{ij }$ is the Kronecker delta function, and
$$P_j\bigl(\mathcal N(\lambda)-\mathcal N(0)\bigr)(\Id-{\bf P})=P_j\bigl(\mathcal N_-(\lambda)-\mathcal N_-(0)\bigr)(\Id-{\bf P}),\quad j\in[0,1/R],$$
together with~\eqref{star2} and~\eqref{-1}, we obtain $\|A-B\|_1=O(\lambda^\epsilon)$ with some $\epsilon>0$.
From~\eqref{12} and~\eqref{13} we get
\be\label{lll}
\begin{aligned}
\det_\zeta    \mathcal N(\lambda)=\detF\left(
\begin{array}{ccc}
 \mathcal N_{0,0}(\lambda) & 0&   0\\
 0 & \mathcal N_{1,1}(0) &   \mathcal N_{1,2}(0)  \mathcal N_{2,2}(0)^{-1}\\
 0 & \mathcal N_{2,1}(0) &  \Id
 \end{array}
\right)\det_\zeta \mathcal N_{2,2}(0)
(1+O(\lambda^\epsilon)).
\end{aligned}
\ee
It remains to note that
$$
\mathcal N_{0,0}(\lambda)= (\mu_0(\lambda)+P_0(\mathcal N_-(\lambda)-\mathcal N_-(0))P_0=(\mu_0(\lambda)+O(\lambda^2))P_0,
$$
$$\det_\zeta^*\mathcal N(0)= \detF\left(
\begin{array}{cc}
 \mathcal N_{1,1}(0) &   \mathcal N_{1,2}(0)  \mathcal N_{2,2}(0)^{-1}\\
 \mathcal N_{2,1}(0) &  \Id
 \end{array}
\right)\det_\zeta \mathcal N_{2,2}(0).$$
This together with~\eqref{lll} and~\eqref{fe} completes the proof.
\end{proof}

\begin{cor}\label{C1} The spectral shift function $\xi$ in~\eqref{Krein} satisfies
\be\label{cc1}
\xi(\lambda\,;\,\Delta,\Delta^D)=( \ln\lambda^2)^{-1} +O\bigl((\ln\lambda)^{-2}\bigr),\quad \lambda \to 0+.
\ee
\end{cor}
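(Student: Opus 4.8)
The goal is to extract the asymptotic of the spectral shift function $\xi(\lambda;\Delta,\Delta^D)$ as $\lambda\to0+$ from the asymptotic of $\det_\zeta\mathcal N(\lambda)$ established in Proposition~\ref{th1}. The link between the two objects is the classical Birman--Krein-type formula relating the modulus of a perturbation determinant to the spectral shift function; here the gluing formula~\eqref{sur1} identifies the relative determinant $\det_\zeta(\Delta-\lambda^2,\Delta^D-\lambda^2)$ with $\det_\zeta\mathcal N(\lambda)$ for $\lambda^2<0$, and this identity continues to the upper half-plane (in the variable $\lambda$) by analyticity, with $\det_\zeta\mathcal N(\lambda)$ playing the role of the perturbation determinant of the pair $(\Delta,\Delta^D)$. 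So first I would recall (or cite, e.g. \cite{Carron1, Carron2}) that
$$
\xi(\lambda;\Delta,\Delta^D)=\frac1\pi\,\Im\,\ln\det_\zeta\mathcal N(\lambda+i0)
$$
for $\lambda>0$, with an appropriate branch of the logarithm fixed by the fact that $\det_\zeta\mathcal N(\lambda)>0$ for $\lambda^2<0$ and by continuity.

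\textbf{Key steps.} (i) Write $\ln\det_\zeta\mathcal N(\lambda)=\ln\bigl(-\frac1{R\ln\lambda}\bigr)+\ln\det^*_\zeta\mathcal N(0)+\ln\bigl(1+O((\ln\lambda)^{-1})\bigr)$ from Proposition~\ref{th1}. The term $\ln\det^*_\zeta\mathcal N(0)$ is a real constant, so it does not contribute to $\Im\ln\det_\zeta\mathcal N$. (ii) Take $\lambda>0$ real and compute the imaginary part. With $\lambda\to0+$, $\ln\lambda$ is real and negative, so $-1/(R\ln\lambda)>0$ and $\ln(-1/(R\ln\lambda))$ is real; the leading imaginary contribution must come from the correction factor. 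Expanding $\ln(1+z)=z+O(z^2)$ with $z=-(\ln\frac R2+\frac{\pi\gamma}2-i\frac\pi2)(\ln\lambda)^{-1}+O((\ln\lambda)^{-2})$ gives
$$
\Im\ln\det_\zeta\mathcal N(\lambda)=\Im z+O\bigl((\ln\lambda)^{-2}\bigr)=\frac{\pi/2}{\ln\lambda}+O\bigl((\ln\lambda)^{-2}\bigr).
$$
(iii) Divide by $\pi$: $\xi(\lambda;\Delta,\Delta^D)=\frac1{2\ln\lambda}+O((\ln\lambda)^{-2})=(\ln\lambda^2)^{-1}+O((\ln\lambda)^{-2})$, which is exactly~\eqref{cc1}, since $\ln\lambda^2=2\ln\lambda$ and the error term is unchanged under $\ln\lambda\mapsto\frac12\ln\lambda^2$.

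\textbf{Main obstacle.} The only genuinely delicate point is justifying the Birman--Krein representation $\xi=\frac1\pi\Im\ln\det_\zeta\mathcal N(\lambda+i0)$ in this singular, non-compact setting and, in particular, fixing the branch of the logarithm so that no integer multiple of $\pi$ is lost. This requires that $\det_\zeta\mathcal N(\lambda)$ be the honest perturbation determinant attached to the trace-class perturbation of resolvents $(\Delta+1)^{-1}-(\Delta^D+1)^{-1}$ from~\eqref{aux}, that it extend holomorphically and non-vanishingly from $\lambda^2<0$ into a neighbourhood of the positive real $\lambda$-axis near $0$ (which follows from Proposition~\ref{th1} since the leading factor $-1/(R\ln\lambda)$ is nonzero for small $\lambda$, and $\det^*_\zeta\mathcal N(0)\neq0$), and that the normalization $\det_\zeta\mathcal N(\lambda)\to$ positive real as $\lambda^2\to0-$ picks out the branch with $\arg=0$ there. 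Granting these facts — all available from \cite{Carron1,Carron2} and the structure of Proposition~\ref{th1} — the computation of $\Im\ln$ above is immediate, and the corollary follows. I would also remark that the same argument simultaneously fixes the previously ``unspecified constant'' issue alluded to in the introduction, because Proposition~\ref{th1} pins down $\det^*_\zeta\mathcal N(0)$ and the full $O((\ln\lambda)^{-1})$ expansion, not merely the leading order.
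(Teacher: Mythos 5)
Your proposal is correct and follows essentially the same route as the paper: the paper cites \cite[Theorem 3.5]{Carron1} for the relation $\xi(\lambda)=\pi^{-1}\operatorname{Arg}\det\mathcal N(\sqrt{\lambda^2+i0})$ (with $\operatorname{Arg}\in(-\pi,\pi]$ and $\xi=0$ for $\lambda^2<0$, which fixes the branch exactly as you describe) and then computes the argument from the asymptotic of Proposition~\ref{th1}, yielding $\frac{1}{\pi}\cdot\frac{\pi/2}{\ln\lambda}=(\ln\lambda^2)^{-1}$ plus the stated error. Your expansion of $\Im\ln(1+z)$ and the observation that the leading factor $-1/(R\ln\lambda)$ and $\det^*_\zeta\mathcal N(0)$ contribute nothing to the argument are precisely the content of that calculation.
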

\begin{proof} By~\cite[Theorem 3.5]{Carron1}  we have $\xi(\lambda)=\pi^{-1} \operatorname{Arg}\det \mathcal N (\sqrt{\lambda^2+i0})$ as $\lambda^2\to 0+$, where $\operatorname{Arg}z\in (-\pi,\pi]$, and  $\xi(\lambda)=0$ if  $\lambda^2<0$. Calculation of the argument in the asymptotic obtained in Propositon~\ref{th1} gives~\eqref{cc1}.
\end{proof}

\subsubsection{The case $K>1$}
 \label{R2} Let us outline the  changes in Proposition~\ref{th1} and Corollary~\ref{C1} needed in the case  $K>1.$
Now we have  $\mathcal N_+(\lambda)=\oplus_{j=1}^K \mathcal N_+^{(j)}(\lambda)$, where each  $\mathcal N_+^{(j)}(\lambda)$ is defined on the circle $\{y\in\Bbb C_j: |y|=R^{1/k_j}\}$ as in~\eqref{Next}. The first eigenvalue of $\mathcal N_+^{(j)}(\lambda)$ is $\mu_0(\lambda)$ and the corresponding eigenspace consists of constant functions on the circle.
As a consequence, in the estimate \eqref{lll} the eigenvectors in $\Ncal_+$  with
eigenvalue $\mu_0(\lambda)$ contribute at the order $O(\frac{1}{\ln \,\lambda})$ instead of $O(\lambda^\epsilon)$ and this is not good enough for our purpose.

We thus introduce $\bf P_0$ the orthogonal projection onto the eigenspace of $\mathcal N_+(\lambda)$ corresponding to $\mu_0(\lambda)$   (note that  $\bf P_0$ does not depend on $\lambda$,  and that $\operatorname{rank} {\bf P_0}=K$). Observe that we have $\ker \mathcal N(0)\subset \ker(\Id-\bf{P_0})$.  We repeat the argument of Proposition~\ref{th1}, where $\mathcal P_0$ is now the orthogonal projection onto $\ker\mathcal N(0)=\{c\in\Bbb C\}$, $\mathcal P_1=(\Id-\mathcal P_0 ) {\bf P}$ where $\bf P$ is the spectral projection of $\mathcal N_+(0)$ on the interval $[0,1/R]$, and $\mathcal P_2=(\Id-{\bf P})$. Clearly, $\mathcal N_+(\lambda)\mathcal P_0=\mu_0(\lambda)\mathcal P_0$ and $\mathcal N_+(\lambda)(\Id-\mathcal P_0)=(\Id-\mathcal P_0)\mathcal N_+(\lambda)$.

 The same argument as in the case $K=1$ leads to
$$
\begin{aligned}
\det_\zeta    \mathcal N(\lambda)=-\frac{1}{R\ln\lambda}\detF\left(
\begin{array}{cc}
  \mathcal N_{1,1}(0)+\mu_0(\lambda){\bf P_0}(\Id-\mathcal P_0) &   \mathcal N_{1,2}(0)  \mathcal N_{2,2}(0)^{-1}\\
 \mathcal N_{2,1}(0) &  \Id
 \end{array}
\right)
\\
\times \det_\zeta \mathcal N_{2,2}(0)\Bigl(1-\Bigl(\ln\frac {R} {2} +\frac{\pi\gamma} {2}-i\frac {\pi }{2}\Bigr)\frac{1}{\ln\lambda} +O\Bigl(\frac {1}{(\ln\lambda)^{2}}\Bigr)\Bigr).
\end{aligned}
$$
(Note that in the case $K=1$ we have $\bf P_0=\mathcal P_0$ and the term  $\mu_0(\lambda){\bf P_0}(\Id-\mathcal P_0)$ does not appear.)
This together with~\eqref{fe} and~\eqref{13}  gives
$$
\begin{aligned}
\det_\zeta    \mathcal N(\lambda)=-\frac{1}{R\ln\lambda}\detF\left(
\begin{array}{cc}
  \mathcal N_{1,1}(0)-\frac 1 {R\ln \lambda}{\bf P_0}(\Id-\mathcal P_0) &   \mathcal N_{1,2}(0)  \mathcal N_{2,2}(0)^{-1}\\
 \mathcal N_{2,1}(0) &  \Id
 \end{array}
\right)
\\
\times \det_\zeta \mathcal N_{2,2}(0)\Bigl(1-\Bigl(\ln\frac {R} {2} +\frac{\pi\gamma} {2}-i\frac {\pi }{2}\Bigr)\frac{1}{\ln\lambda} +O\Bigl(\frac {1}{(\ln\lambda)^{2}}\Bigr)\Bigr).
\end{aligned}
$$
Here the Fredholm determinant is a holomorphic function of the parameter $\tau:=\frac{1}{R\ln\lambda}$, we have
\be\label{LR}
\det_\zeta    \mathcal N(\lambda)=-\frac {1}{R\ln \lambda}\det^*_\zeta\mathcal N(0)\Bigl(1-\Bigl(C+\ln\frac {R} {2} +\frac{\pi\gamma} {2}-i\frac {\pi }{2}\Bigr)\frac{1}{\ln\lambda} +O\Bigl(\frac {1}{(\ln\lambda)^{2}}\Bigr)\Bigr)
\ee
with some constant $C=C(R)$.

We observe that $C$ must be real since $\det_\zeta    \mathcal N(\lambda)$ is positive for $\lambda\in\Bbb C$, $ \operatorname{Arg}\lambda=\pi/2$ (as  $\mathcal N(\lambda)$ is a positive self-adjoint operator for those values of $\lambda$).  Thus $C$ does not influence the calculation of the argument in the asymptotic of $\det_\zeta    \mathcal N(\lambda)$ and Corollary~\ref{C1} remains valid for $K>1$.
We will use Corollary~\ref{C1}  to define a relative determinant of $(\Delta,\mathring{\Delta})$ at $\lambda=0$.

\subsection{The relative determinant and the gluing formula at  $\lambda=0$}
In this section we prove the following Theorem.

\begin{thm} \label{M}The gluing formula
$$
\det_\zeta (\Delta,\mathring\Delta)= C \det^*_\zeta \mathcal N\cdot\det_\zeta\Delta^D_-
$$
is valid, where $\mathcal N$, $\Delta_-^D$ depend on $R$. The constant $C$ depends on $R$ but not on the moduli parameters
$z_1,\dots z_M.$
\end{thm}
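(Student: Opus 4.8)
The plan is to pass to the limit $\lambda^2 \to 0-$ in the gluing formula~\eqref{sg1} for negative energies, using the asymptotics established in the previous subsections. First I would recall what is known on the right-hand side of~\eqref{sg1}: by Proposition~\ref{th1} (and its extension~\eqref{LR} to $K>1$) we have, as $\lambda \to 0+$ with $\lambda^2 < 0$ (i.e. $\operatorname{Arg}\lambda = \pi/2$),
\begin{equation*}
\det_\zeta \mathcal N(\lambda) = -\frac{1}{R\ln\lambda}\det^*_\zeta\mathcal N(0)\bigl(1+o(1)\bigr),
\end{equation*}
and the same asymptotic holds for the moduli-independent reference quantity $\det_\zeta\mathring{\mathcal N}(\lambda)$ with its own $\det^*_\zeta\mathring{\mathcal N}(0)$ (and the identical leading factor $-1/(R\ln\lambda)$, since by the Remark the eigenvalue $\mu_0$ is the same for all cone angles). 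Hence the ratio $\det_\zeta\mathcal N(\lambda)/\det_\zeta\mathring{\mathcal N}(\lambda)$ has a finite, nonzero limit as $\lambda^2\to0-$, namely $\det^*_\zeta\mathcal N(0)/\det^*_\zeta\mathring{\mathcal N}(0)$. On the left-hand side, $\det_\zeta(\Delta_-^D-\lambda^2)$ and $\det_\zeta(\mathring\Delta_-^D-\lambda^2)$ are manifestly continuous at $\lambda^2=0$ since $\Delta_-^D$ and $\mathring\Delta_-^D$ are positive with discrete spectrum, so $\det_\zeta\Delta_-^D$ and $\det_\zeta\mathring\Delta_-^D$ are well-defined and the relevant factors converge.

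The crux is therefore to show that $\det_\zeta(\Delta-\lambda^2,\mathring\Delta-\lambda^2)$ has a finite nonzero limit as $\lambda^2\to0-$, and to \emph{define} $\det_\zeta(\Delta,\mathring\Delta)$ as that limit. For this I would follow the strategy of Hassell--Zelditch~\cite{Hassell-Zelditch}: write $\ln\det_\zeta(\Delta-\lambda^2,\mathring\Delta-\lambda^2) = -\zeta'(0;\Delta-\lambda^2,\mathring\Delta-\lambda^2)$ and differentiate in $\lambda^2$. The derivative of the relative $\zeta$-determinant in the spectral parameter is controlled by the relative heat trace / spectral shift function; using the Krein-type representation~\eqref{Krein} together with the spectral shift function $\xi(\cdot\,;\Delta,\mathring\Delta)$ of the pair $(\Delta,\mathring\Delta)$, one gets $\partial_{\lambda^2}\ln\det_\zeta(\Delta-\lambda^2,\mathring\Delta-\lambda^2)$ expressed as an integral against $\xi$. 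The asymptotic $\xi(\lambda;\Delta,\mathring\Delta) = (\ln\lambda^2)^{-1} + O((\ln\lambda)^{-2})$ as $\lambda\to0+$ — which follows by combining Corollary~\ref{C1} (valid for all $K$, as noted at the end of \S2.3.3) with the analogous statement for the reference manifold and the chain relation $\xi(\cdot;\Delta,\mathring\Delta)=\xi(\cdot;\Delta,\Delta^D)-\xi(\cdot;\mathring\Delta,\mathring\Delta^D)$ (the interior Dirichlet pieces contribute no continuous spectrum and cancel in the relevant range) — shows that this derivative is integrable near $\lambda^2=0$. Consequently the limit $\lambda^2\to0-$ of $\det_\zeta(\Delta-\lambda^2,\mathring\Delta-\lambda^2)$ exists and is finite; that it is nonzero follows because all the other factors in~\eqref{sg1} have finite nonzero limits. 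We take this limit as the \emph{definition} of $\det_\zeta(\Delta,\mathring\Delta)$.

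Having set up both sides, I would simply take $\lambda^2\to0-$ in~\eqref{sg1}. The factors $\det_\zeta(\mathring\Delta_-^D-\lambda^2)\to\det_\zeta\mathring\Delta_-^D$ and $\det_\zeta(\Delta_-^D-\lambda^2)\to\det_\zeta\Delta_-^D$, the ratio $\det_\zeta\mathcal N(\lambda)/\det_\zeta\mathring{\mathcal N}(\lambda)\to\det^*_\zeta\mathcal N(0)/\det^*_\zeta\mathring{\mathcal N}(0)$, and rearranging yields
\begin{equation*}
\det_\zeta(\Delta,\mathring\Delta) = \frac{\det^*_\zeta\mathcal N(0)}{\det^*_\zeta\mathring{\mathcal N}(0)}\cdot\frac{\det_\zeta\Delta_-^D}{\det_\zeta\mathring\Delta_-^D} = C\,\det^*_\zeta\mathcal N(0)\cdot\det_\zeta\Delta_-^D,
\end{equation*}
with $C = \bigl(\det^*_\zeta\mathring{\mathcal N}(0)\,\det_\zeta\mathring\Delta_-^D\bigr)^{-1}$. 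Since $\mathring{\mathcal N}(0)$ and $\mathring\Delta_-^D$ are built solely from the reference cones, which depend on $R$ and the pole multiplicities $k_j$ but not on the critical values $z_1,\dots,z_M$, the constant $C$ is moduli-independent (as long as the $P_m$ stay away from $\Sigma$, so that the splitting $X = X_-\cup_\Sigma X_+$ and the identification of $X_+$ with a fixed subset of $\mathring X$ are unchanged). I expect the main obstacle to be the integrability of $\partial_{\lambda^2}\ln\det_\zeta(\Delta-\lambda^2,\mathring\Delta-\lambda^2)$ near $\lambda^2=0$: this is exactly why the refined (as opposed to merely qualitative) asymptotics of $\det_\zeta\mathcal N(\lambda)$ and of the spectral shift function obtained in Proposition~\ref{th1} and Corollary~\ref{C1} were needed — a weaker asymptotic of $\xi$ would only give an $O((\ln\lambda)^{-1})$-type bound, which is not integrable, and the limit would fail to exist.
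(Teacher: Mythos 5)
Your argument reproduces the paper's proof in the case $K=1$ (a single end), but it breaks down for $K>1$, and that breakdown is precisely the technical heart of the theorem. The error is in the claim that $\det_\zeta\mathring{\mathcal N}(\lambda)$ has ``the identical leading factor $-1/(R\ln\lambda)$''. The reference manifold $\mathring X$ is a \emph{disjoint union of $K$ cones}, so $\mathring{\mathcal N}(\lambda)=\oplus_{j=1}^K\mathring{\mathcal N}^{(j)}(\lambda)$ and its determinant is a product of $K$ factors, each carrying one $-1/(R\ln\lambda)$; hence $\det_\zeta\mathring{\mathcal N}(\lambda)\sim(-R\ln\lambda)^{-K}\det^*_\zeta\mathring{\mathcal N}(0)$ (see \eqref{dev3+}), whereas the connected surface $X$ has $\ker\mathcal N(0)=\{c\in\C\}$ of dimension one and contributes a single such factor. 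The ratio $\det_\zeta\mathcal N(\lambda)/\det_\zeta\mathring{\mathcal N}(\lambda)$ therefore \emph{diverges} like $(\ln\lambda)^{K-1}$ (this is the content of \eqref{NNK}), rather than converging. Correspondingly, your stated asymptotic $\xi(\lambda;\Delta,\mathring\Delta)=(\ln\lambda^2)^{-1}+O((\ln\lambda)^{-2})$ is wrong: the chain relation you invoke, combined with Corollary~\ref{C1} and its analogue $\xi(\lambda;\mathring\Delta,\mathring\Delta^D)=K(\ln\lambda^2)^{-1}+O((\ln\lambda)^{-2})$, yields $\xi(\lambda;\Delta,\mathring\Delta)=-(K-1)(\ln\lambda^2)^{-1}+O((\ln\lambda)^{-2})$ (formula \eqref{devxias}). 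Note also that your statement is internally inconsistent: a nonvanishing $(\ln\lambda^2)^{-1}$ leading term is \emph{not} integrable against $d\mu/\mu$ near $0$ (the integral diverges like $\ln\ln(1/\delta)$), so you cannot simultaneously assert that asymptotic and the integrability needed for your limit to exist.

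For $K>1$ the consequence is that $\det_\zeta(\Delta-\lambda^2,\mathring\Delta-\lambda^2)$ does \emph{not} have a finite limit as $\lambda^2\to0-$: by Proposition~\ref{devp1}, $\zeta'(0;\Delta-\lambda^2,\mathring\Delta-\lambda^2)$ diverges like $(1-K)\ln\ln(i/\lambda)$. One must instead renormalize: the paper defines $\det_\zeta(\Delta,\mathring\Delta):=e^{-Z_0'}$, where $Z_0'$ is the coefficient of $s$ in the expansion $\zeta(s;\Delta,\mathring\Delta)=\zeta_2(0;0)+s(K-1)\ln(-s)+sZ_0'+o(s)$ obtained via the Hassell--Zelditch cutoff decomposition (Proposition~\ref{devp2}), and then shows that the divergence $(\ln(i/\lambda))^{1-K}$ of the left-hand side of \eqref{sg1} exactly cancels the divergence $(\ln(i/\lambda))^{K-1}$ of the Neumann-jump ratio on the right. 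Matching the finite parts produces the additional factor $\bigl(Re^{\gamma+\ln 2}\bigr)^{K-1}$ in the constant $C$, which your computation misses. In short: your proof is essentially complete for a single pole, but the multi-end case requires the regularized definition of the determinant and the cancellation of the log-log divergences, not a naive passage to the limit in \eqref{sg1}.
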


Observe that this theorem first requires a definition for the left-hand side of the equality.
Once this is done, we will let $\lambda$ go to zero in \eqref{sg1} and study the limit of both sides.

As before the case $K =1$ is simpler than the general one. We will present the proof for this case first.
The case $K>1$ is more technically involved but the arguments we need can be adapted from \cite{Hassell-Zelditch}.

\subsubsection{ The case $K=1$}
In this case, the definition of $\det_\zeta
(\Delta,\mathring\Delta)$ is rather straightforward, since, for $K=1$, a conventional regularization (see, e.g.,~\cite{HKK} and references therein) for the relative zeta function makes sense. Indeed, the first integral in   the  representation
\begin{equation}\label{dd1}
\zeta(s; \Delta,\mathring \Delta)=\left(\int_0^1+\int_1^\infty\right)\frac{t^{s-1}}{\Gamma(s)} \Tr \bigl(e^{-t\Delta}-e^{-t\mathring\Delta}\bigr)\,dt
\end{equation}
 defines an analytic in $\Re s>1$ function that has a meromorphic continuation to $\Bbb C$ with no pole at zero by the usual argument based on short time heat trace asymptotic
\begin{equation}\label{ST}
\Tr \bigl(e^{-t\Delta }- e^{-t\mathring{\Delta}}\bigr)\sim \sum_{j\geq- 2} a_j t^{j/2},\quad t\to 0+.
\end{equation}

For the second integral we need the long time heat trace behaviour given by the following lemma.

\begin{lem} \label{l++} Assume that $K=1$. Then
$$
\Tr \bigl(e^{-t\Delta }- e^{-t\mathring\Delta}\bigr)=O\bigl((\ln t)^{-2}\bigr)\text{ as } t\to+\infty.
$$
\end{lem}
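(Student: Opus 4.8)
The plan is to reduce the long-time behaviour of $\Tr\bigl(e^{-t\Delta}-e^{-t\mathring\Delta}\bigr)$ to the small-$\lambda$ asymptotics of spectral shift functions, starting from the decomposition already established above,
\[
e^{-t\Delta}-e^{-t\mathring{\Delta}} = \bigl( e^{-t\Delta}-e^{-t\Delta^D}\bigr) - \bigl(e^{-t\mathring{\Delta}}-e^{-t\mathring{\Delta}^D}\bigr) + \bigl( e^{-t\Delta^D_-}-e^{-t\mathring{\Delta}^D_-}\bigr).
\]
First I would dispose of the last term: the Friedrichs Dirichlet Laplacians $\Delta^D_-$ and $\mathring{\Delta}^D_-$ on the respective compact regions $X_-$ and $\mathring{X}_-$ have discrete spectra with strictly positive lowest eigenvalue, so $\Tr\bigl(e^{-t\Delta^D_-}-e^{-t\mathring{\Delta}^D_-}\bigr)=O(e^{-ct})$ as $t\to+\infty$ for some $c>0$.

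Next, applying the Krein representation~\eqref{Krein} to both pairs $(\Delta,\Delta^D)$ and $(\mathring{\Delta},\mathring{\Delta}^D)$ gives
\[
\Tr\bigl( e^{-t\Delta}-e^{-t\Delta^D}\bigr)-\Tr\bigl(e^{-t\mathring{\Delta}}-e^{-t\mathring{\Delta}^D}\bigr) = -t\int_0^\infty e^{-t\lambda^2}\,\eta(\lambda)\,2\lambda\,d\lambda,
\]
where $\eta(\lambda):=\xi(\lambda\,;\,\Delta,\Delta^D)-\xi(\lambda\,;\,\mathring{\Delta},\mathring{\Delta}^D)$. The key observation is that $\eta(\lambda)=O\bigl((\ln\lambda)^{-2}\bigr)$ as $\lambda\to0+$. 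Indeed, Corollary~\ref{C1} gives the expansion of $\xi(\cdot\,;\,\Delta,\Delta^D)$, and the very same reasoning applies to the reference pair: Proposition~\ref{th1} and its proof go through verbatim with $\mathcal N(\lambda)=\mathcal N_+(\lambda)+\mathcal N_-(\lambda)$ replaced by $\mathring{\mathcal N}(\lambda)=\mathcal N_+(\lambda)+\mathring{\mathcal N}_-(\lambda)$ — the external operator $\mathcal N_+(\lambda)$, and in particular its eigenvalue $\mu_0(\lambda)$ from~\eqref{fe}, being unchanged, while $\mathring{\mathcal N}_-(\lambda)$ is, like $\mathcal N_-(\lambda)$, holomorphic in $\lambda^2$ near $0$ and nonnegative self-adjoint with one-dimensional kernel at $\lambda=0$. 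Hence $\xi(\cdot\,;\,\mathring{\Delta},\mathring{\Delta}^D)$ has the same leading term $(\ln\lambda^2)^{-1}$, which therefore cancels in $\eta$; away from zero, $\eta$ is locally bounded and of at most polynomial growth at infinity (a standard consequence of the heat expansion~\eqref{t0}).

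It then remains to estimate the integral, which I would do by splitting at a fixed small $\delta\in(0,1)$. Away from zero, $e^{-t\lambda^2}\le e^{-t\delta^2/2}\,e^{-t\lambda^2/2}$ together with the polynomial growth of $\eta$ gives a contribution $O\bigl(te^{-t\delta^2/2}\bigr)=O(e^{-ct})$. Near zero, using $|\eta(\lambda)|\le C(\ln\lambda)^{-2}$ and substituting $s=t\lambda^2$,
\[
t\int_0^\delta e^{-t\lambda^2}(\ln\lambda)^{-2}\,2\lambda\,d\lambda = 4\int_0^{t\delta^2}\frac{e^{-s}}{(\ln s-\ln t)^2}\,ds,
\]
and splitting this $s$-integral at $s=\sqrt t$ — where below one has $|\ln s-\ln t|\ge\tfrac12\ln t$, and above (using $t\delta^2<t$) one has $|\ln s-\ln t|\ge2|\ln\delta|$ against a factor $e^{-s}\le e^{-\sqrt t}$ — bounds it by $O\bigl((\ln t)^{-2}\bigr)$. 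Collecting the three contributions yields the claim.

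The part I expect to be the main obstacle is the cancellation of the leading $(\ln\lambda^2)^{-1}$ terms of the two spectral shift functions. Without it one would obtain only $O\bigl((\ln t)^{-1}\bigr)$, which fails to be integrable against $t^{-1}\,dt$ and would not make the second integral in~\eqref{dd1} — hence the definition of $\det_\zeta(\Delta,\mathring\Delta)$ — well behaved at $s=0$. This is precisely why Corollary~\ref{C1} was carried to second order, and why the fact, noted in the Remark after~\eqref{oo}, that $\mu_0(\lambda)$ — the sole source of the $1/\ln\lambda$ behaviour — is common to $X$ and $\mathring X$ is exactly what makes the argument work.
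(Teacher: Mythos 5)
Your proof is correct and follows essentially the same route as the paper: the same three-term decomposition via the Dirichlet operators on $\Sigma$, the exponential decay of the compact contribution, the cancellation of the common $(\ln\lambda^2)^{-1}$ term in the two spectral shift functions via Corollary~\ref{C1}, and a Laplace-type estimate of the resulting integral. The only difference is that you carry out explicitly two steps the paper delegates — the validity of Corollary~\ref{C1} for $\mathring\xi$ (stated parenthetically there) and the conversion of the $O\bigl((\ln\lambda)^{-2}\bigr)$ bound into $O\bigl((\ln t)^{-2}\bigr)$ (cited to Fedoryuk) — and both of your arguments are sound.
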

\begin{proof}  Since $\Delta^D_+\equiv \mathring{\Delta}^D_+$  and the operators  $\Delta_-^D$, $\mathring{\Delta}^D_-$ are Dirichlet Laplacians on compact manifolds, we have
\begin{equation}\label{123}
\begin{split}
\Tr & \bigl(e^{-t\Delta } - e^{-t\mathring\Delta}\bigr)\\ & =\,\Tr \bigl(e^{-t\Delta }- e^{-t\Delta^D_-\oplus\Delta^D_+}\bigr)-\Tr \bigl(e^{-t\mathring{\Delta} }- e^{-t\mathring{\Delta}^D_-\oplus\mathring{\Delta}^D_+}\bigr) +\Tr e^{-t\Delta^D_-}-\Tr e^{-t\mathring{\Delta}^D_-}\\
&=\,-t\int_0^\infty e^{-t\lambda^2} (\xi(\lambda\,;\,\Delta,\Delta^D)-\xi(\lambda\,;\,\mathring{\Delta},\mathring{\Delta}^D))2\lambda\,d\lambda +O(e^{-t\delta}),\quad t\to+\infty,
\end{split}
\end{equation}
where  $\delta>0$ is the smallest eigenvalue  in the spectra of $\Delta^D_-$ and $\mathring{\Delta}^D_-$. (In~\eqref{123}  we also used~\eqref{Krein} for $\Delta$ and $\mathring{\Delta}$.)
As a consequence of Corollary~\ref{C1} (which is also valid for $\mathring\xi$ in the case $K=1$) we have
$$
\xi(\lambda\,;\,\Delta,\Delta^D)-\xi(\lambda,\mathring{\Delta},\mathring{\Delta}^D)=O\bigl((\ln\lambda)^{-2}\bigr),\quad\lambda\to 0+.$$
This together with~\eqref{123} implies the assertion; see e.g.~\cite[Theorem 1.7]{Fedoryuk} for details.
\end{proof}

As a consequence, the second integral in~\eqref{dd1} defines a holomorphic in $\Re s<0 $ function that has a  continuous in $\Re s\leq 0$ derivative. Thus $\zeta(s; \Delta,\mathring \Delta)$ is a meromorphic function in $\Re s<0$ and $\zeta'(s; \Delta,\mathring \Delta)$ tends to a certain limit $\zeta'(0; \Delta,\mathring \Delta)$ as $s\to0-$. The relative zeta regularized determinant is defined to be
\begin{equation}\label{dd2}
\det_\zeta(\Delta,\mathring\Delta)=e^{-\zeta'(0; \Delta,\mathring \Delta)}.
\end{equation}

We now prove the gluing formula in the case $K=1$. First observe that by Proposition~\ref{th1} (applied also to $\mathring{\mathcal N}(\lambda)$) we have
\begin{equation}\label{NN}
\frac{\det_\zeta\mathcal N(\lambda)}{\det_\zeta\mathring{\mathcal N}(\lambda)}\to \frac{\det_\zeta^*\mathcal N(0)}{\det_\zeta^*\mathring{\mathcal N}(0)} \text{ as } \lambda^2\to 0-,\  K=1.
\end{equation}
The limit $\lambda \rightarrow 0$ is then addressed by the
\begin{prop} \label{fff}In the case $K=1$ we have
$$
\det_\zeta(\Delta-\lambda^2,\mathring\Delta-\lambda^2)\to\det_\zeta(\Delta,\mathring\Delta)\text{ as }\lambda^2\to 0-,
$$
where the determinant  $\det_\zeta(\Delta,\mathring\Delta)$ has been defined in~\eqref{dd2}.
\end{prop}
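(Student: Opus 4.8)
The plan is to show that the $\lambda^2\to0-$ limit of $\zeta'(0;\Delta-\lambda^2,\mathring\Delta-\lambda^2)$ equals $\zeta'(0;\Delta,\mathring\Delta)$, by controlling the $t$-integral defining the relative zeta function uniformly in $\lambda$. Write, for $\lambda^2<0$ and $\Re s>1$,
\[
\zeta(s;\Delta-\lambda^2,\mathring\Delta-\lambda^2)=\frac{1}{\Gamma(s)}\int_0^\infty t^{s-1}e^{\lambda^2 t}\,\Tr\bigl(e^{-t\Delta}-e^{-t\mathring\Delta}\bigr)\,dt,
\]
and split the integral at $t=1$. On $(0,1]$ the factor $e^{\lambda^2 t}$ is bounded by $1$ and tends to $1$ uniformly as $\lambda^2\to0-$; combined with the short-time asymptotic~\eqref{ST} (which is uniform in $\lambda$ since it concerns $t\to0+$), the first integral and its $s$-derivative converge, after meromorphic continuation, to the corresponding object with $\lambda=0$. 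This is the routine half.

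The substance is the tail $\int_1^\infty$. Here I would use Lemma~\ref{l++}: $\Tr(e^{-t\Delta}-e^{-t\mathring\Delta})=O((\ln t)^{-2})$ as $t\to+\infty$. For $\lambda^2<0$ the extra factor $e^{\lambda^2 t}\le 1$ only helps with convergence but introduces the $\lambda$-dependence we must kill. Since $\int_1^\infty t^{-1}(\ln t)^{-2}\,dt<\infty$, the integrand $t^{s-1}e^{\lambda^2 t}h(t)$ with $h(t)=\Tr(e^{-t\Delta}-e^{-t\mathring\Delta})$ is dominated, for $\Re s\le 0$, by the integrable function $t^{-1}|h(t)|$ uniformly in $\lambda^2\le0$; dominated convergence then gives
\[
\int_1^\infty t^{s-1}e^{\lambda^2 t}h(t)\,dt\ \longrightarrow\ \int_1^\infty t^{s-1}h(t)\,dt,\qquad \lambda^2\to0-,
\]
and the same argument applied to $\partial_s$ of the integrand (which brings down a harmless factor $\ln t$, still leaving an integrable dominating function by the $(\ln t)^{-2}$ decay) shows convergence of the derivative as well, with continuity up to $\Re s=0$.

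Assembling the two pieces, $\zeta(s;\Delta-\lambda^2,\mathring\Delta-\lambda^2)$ and its $s$-derivative converge, locally uniformly in $s$ near $0$ (in the region $\Re s<0$ where no pole occurs), to $\zeta(s;\Delta,\mathring\Delta)$ and its derivative as constructed in~\eqref{dd1}. Evaluating at $s=0-$ gives $\zeta'(0;\Delta-\lambda^2,\mathring\Delta-\lambda^2)\to\zeta'(0;\Delta,\mathring\Delta)$, hence $\det_\zeta(\Delta-\lambda^2,\mathring\Delta-\lambda^2)\to\det_\zeta(\Delta,\mathring\Delta)$ by~\eqref{dd2}. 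The main obstacle is the interchange of limit and integration in the tail together with the subsequent analytic continuation in $s$: one must check that the meromorphic continuation of $\int_0^1$ behaves continuously in $\lambda$ (clear from the uniform heat expansion) and that no pole of $\Gamma(s)^{-1}\int_0^\infty$ migrates to $s=0$ as $\lambda^2\to0-$ — which it cannot, since the coefficient $a_0$ in~\eqref{ST} is $\lambda$-independent and governs the potential pole at $s=0$, and it is already known to vanish in the sense that there is no pole there. A small point to record is that $e^{\lambda^2 t}$ is monotone in $t$, so one may invoke monotone/dominated convergence cleanly without fussing over signs.
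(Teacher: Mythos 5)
Your overall strategy --- split the $t$-integral at $t=1$, handle $(0,1]$ by the uniform short-time expansion \eqref{ST}, and handle the tail with the long-time bound of Lemma~\ref{l++} plus dominated convergence --- is exactly the paper's. The first half and the convergence of the tail integral itself are fine. But the step where you differentiate the tail in $s$ fails as written: $\partial_s$ of $t^{s-1}e^{\lambda^2 t}h(t)$ produces the factor $\ln t$, and at $\Re s=0$ the natural dominating function is $t^{-1}|\ln t|\,|h(t)|\le C\,t^{-1}(\ln t)^{-1}$ for large $t$, which is \emph{not} integrable on $[2,\infty)$ (substitute $u=\ln t$). So the extra $\ln t$ is not harmless, dominated convergence does not apply, and with only the bound $h(t)=O((\ln t)^{-2})$ the quantity $\partial_s\int_1^\infty t^{s-1}h(t)\,dt$ can diverge like $\ln(1/|s|)$ as $s\to0-$ at $\lambda=0$: the bare tail integral is continuous up to $\Re s=0$ but not differentiable there.

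The missing ingredient is the prefactor $1/\Gamma(s)$, which you drop when analyzing the tail. Writing the tail contribution as $\zeta_2(s;\lambda^2)=F(s,\lambda)/\Gamma(s)$ with $F(s,\lambda)=\int_1^\infty t^{s-1}e^{\lambda^2 t}h(t)\,dt$, one has
\[
\partial_s\zeta_2(s;\lambda^2)=\Bigl(\tfrac{1}{\Gamma}\Bigr)'(s)\,F(s,\lambda)+\frac{F'(s,\lambda)}{\Gamma(s)}.
\]
Since $1/\Gamma$ has a first-order zero at $s=0$ with derivative $1$, the first term tends to $F(0,\lambda)$, which \emph{is} controlled by your dominated-convergence argument for the integral itself; and the second term is $O(|s|\ln(1/|s|))\to 0$ uniformly in $\lambda^2\le 0$, because $|F'(s,\lambda)|\le C\int_2^\infty t^{\Re s-1}(\ln t)^{-1}\,dt+C'=O(\ln(1/|s|))$. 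This is precisely the remark in the paper's proof that ``$1/\Gamma(s)$ has a first order zero at $s=0$''; it is what makes $\zeta_2'$, and hence $\zeta'(0;\Delta-\lambda^2,\mathring\Delta-\lambda^2)$, continuous at $\lambda^2=0$. With this repair your argument goes through and coincides with the paper's.
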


\begin{proof}
Let us write the relative zeta function in the form
$$\begin{aligned}
\zeta(s;\Delta-\lambda^2,\mathring\Delta -\lambda^2)=\left(\int_0^1+\int_1^\infty \right)\frac{t^{s-1} e^{t \lambda^2 } }{\Gamma(s)}\Tr \bigl(e^{-t\Delta }- e^{-t\mathring\Delta}\bigr)\,dt.
\end{aligned}
$$
Thanks to~\eqref{t0} the first integral converges  for $\Re s>1$ uniformly in $\lambda\leq 0$ and has a meromorphic continuation to $\Bbb C$ with no pole at zero (by the usual argument based on the short time heat trace asymptotic~\eqref{ST}). Due to Lemma~\ref{l++} the second integral  defines a holomorphic in $\Re s<0$ and continuous in $\lambda^2\leq 0$ and $\Re s\leq 0$ function. Moreover, as $1/\Gamma(s)$ has a first order zero at $s=0$, Lemma~\ref{l++} also implies that the first derivative with respect to $s$ of the second integral  is also continuous in  $\lambda^2\leq 0$ and $\Re s\leq 0$.  Thus we obtain
$$
\zeta' (0;\Delta-\lambda^2,\mathring\Delta-\lambda^2)\to  \zeta' (0;\Delta,\mathring\Delta), \quad\lambda^2\to 0-.
$$
where $ \zeta' (0;\Delta,\mathring\Delta)$ is defined using \eqref{dd1}.
\end{proof}

\begin{proof}[Proof of Theorem~\ref{M} in the case $K=1$] We pass to the limit as $\lambda^2\to 0-$ in~\eqref{sg1}.  Since $\Delta^D_-$
is positive, we have $\det_\zeta(\Delta^D_--\lambda^2)\to \det_\zeta \Delta^D_-$ as $\lambda^2\to 0-$, and the same is true for $\mathring\Delta^D_-$.
Thanks to~\eqref{NN} and Proposition~\ref{fff} we obtain
$$
\frac{\det_\zeta(\Delta,\mathring\Delta)\det_\zeta \mathring\Delta^D_-}{\det_\zeta \Delta_-^D}=\frac{\det_\zeta^*\mathcal N(0)}{\det_\zeta^*\mathring{\mathcal N}(0)},
$$
which proves~Theorem~\ref{M}, where $\mathcal N(0)$ is denoted by $\mathcal N$ and the constant
$$
C= \bigl(\det_\zeta \mathring\Delta^D_-    \det_\zeta^*\mathring{\mathcal N}(0)\bigr)^{-1}
$$
is moduli independent.
\end{proof}

\subsubsection{The case $K >1$}

In the case $K>1$ we have $\mathring{\mathcal N}(\lambda)=\oplus_{j=1}^K \mathring{\mathcal N}^{(j)}(\lambda)$, where $\mathring{\mathcal N}^{(j)}(\lambda)$ is the Neumann jump operator on the circle $\{y\in\Bbb C_j:|y|=  R^{1/k_j}\}$ located on the infinite cone $(\Bbb C_j, |d y^{k_j}|^2)$. We have
$$
\det_\zeta  \mathring{\mathcal N}(\lambda)=\prod_{j=1}^K \det_\zeta  \mathring{\mathcal N}^{(j)}(\lambda),\quad \det_\zeta^*  \mathring{\mathcal N}(0)=\prod_{j=1}^K \det_\zeta^*  \mathring{\mathcal N}^{(j)}(0).
$$
We apply Proposition~\ref{th1} to each  $\det_\zeta  \mathring{\mathcal N}^{(j)}(\lambda)$, $j=1,\dots,K$ and get
\be\label{dev3+}
\det_\zeta  \mathring{\mathcal N}(\lambda)=
(-R\ln \lambda)^{-K}\det^*_\zeta\mathcal N(0)\Bigl(1-\Bigl(\ln\frac {R} {2} +\frac{\pi\gamma} {2}-i\frac {\pi }{2}\Bigr)\frac{K}{\ln\lambda} +O\Bigl(\frac {1}{(\ln\lambda)^{2}}\Bigr)\Bigr),
\ee
as $|\lambda| \to 0+$, $\Im\lambda\geq 0$.

Thanks to the relation $\xi(\lambda\,;\,\mathring{\Delta},\mathring{\Delta}^D)=\pi^{-1} \operatorname{Arg}\det \mathring{\mathcal N} (\sqrt{\lambda^2+i0})$ as $\lambda^2\to 0+$,  calculation of the argument in~\eqref{dev3+} leads to
$$
\xi(\lambda\,;\,\mathring{\Delta},\mathring{\Delta}^D)\,=\,K(\ln\lambda^2)^{-1}+O((\ln \lambda)^{-2}),\quad \lambda\to 0+,
$$
where $\xi(\cdot \,;\,\mathring{\Delta},\mathring{\Delta}^D)\in L^1(\Bbb R_+, (1+\lambda^2)^{-2}\,d\lambda^2)$  is the spectral shift function satisfying
$$
\Tr\bigl( (\mathring\Delta+1)^{-1}-(\mathring\Delta^D+1)^{-1}
\bigr)=-\int_0^\infty \xi(\lambda\,;\,\mathring{\Delta},\mathring{\Delta}^D)(1+\lambda^2)^{-2}\,d\lambda^2;
$$
cf.~Corollary~\ref{C1}.
This together with  Corollary~\ref{C1} gives
\be\label{dev2}
\xi(\lambda,\Delta,\Delta^D)-\xi(\lambda,\mathring{\Delta},\mathring{\Delta}^D) =-(K-1)(\ln\lambda^2)^{-1}+O((\ln \lambda)^{-2}),\quad \lambda\to 0+.
\ee
Besides, Proposition~\ref{th1} together with~\eqref{dev3+} implies that
\be\label{NNK}
\left(\ln\frac{i}{\lambda}\right)^{1-K}\frac{\det_\zeta\mathcal N(\lambda)}{\det_\zeta\mathring{\mathcal N}(\lambda)}\to R^{K-1}\frac{\det_\zeta^*\mathcal N(0)}{\det_\zeta^*\mathring{\mathcal N}(0)} \text{ as } \lambda^2\to 0-,\ K\geq 1.
\ee

Recall that for $\lambda^2<0$ the relative zeta function is defined as the meromorphic  continuation of
\begin{equation}\label{eq:defzet}
\zeta(s;\Delta-\lambda^2,\mathring\Delta -\lambda^2)=\int_0^\infty\frac{t^{s-1} e^{t \lambda^2 } }{\Gamma(s)}\Tr \bigl(e^{-t\Delta }- e^{-t\mathring\Delta}\bigr)\,dt
\end{equation}
from $\Re s >1$.

We have
$$
\Tr \bigl(e^{-t\Delta }- e^{-t\mathring\Delta}\bigr)=\Tr \bigl(e^{-t\Delta }-e^{-t\Delta^D_-\oplus\Delta_+^D}\bigr)-\Tr \bigl( e^{-t\mathring\Delta}-e^{-t\mathring\Delta^D_-\oplus\mathring\Delta^D_+}\bigr)+\Tr e^{-t\Delta^D_-}-\Tr e^{-t\mathring{\Delta}^D_-}.
$$
(Now the short time asymptotic~\eqref{ST}  is a consequence of~\eqref{t0} and similar short time asymptotics for $ \Tr \bigl(e^{-t\mathring{\Delta} }- e^{-t\mathring{\Delta}^D}\bigr)$, $\Tr e^{-t\Delta_-^D}$,  and $\Tr e^{-t\mathring{\Delta}_-^D}$.)
Let $$N(\lambda)=\sum_{j: \lambda_j^2\leq \lambda^2} \dim\ker (\Delta_-^D-\lambda_j^2),$$ where $\lambda_j^2$ are the eigenvalues of $\Delta_-^D$,  be the counting function of $\Delta_-^D$.
Similarly, let $\mathring{N}(\lambda)$ be the counting function of $\mathring{\Delta}_-^D$. Then
$$\Tr (e^{-t\Delta^D_-}-e^{-t\mathring{\Delta}^D_-})=t\int_0^\infty e^{-t\lambda^2 }\Bigl(N(\lambda)-\mathring{N}(\lambda)\Bigr)2\lambda\,d\lambda$$
and
$$
\xi(\lambda; \Delta\,;\,\mathring{\Delta})=\xi(\lambda\,;\,\Delta,\Delta^D)-\xi(\lambda\,;\,\mathring{\Delta},\mathring{\Delta}^D)-N(\lambda)+\mathring N(\lambda)
$$
is the spectral shift function for the pair $(\Delta,\mathring{\Delta})$ such that
$$
\Tr \bigl(e^{-t\Delta }- e^{-t\mathring\Delta}\bigr)=-t\int_0^\infty e^{-t\lambda^2}\xi(\lambda; \Delta,\mathring{\Delta})2\lambda\,d\lambda.
$$
Since the operators $\Delta_-^D$ and $\mathring{\Delta}_-^D$ are positive, from~\eqref{dev2} it follows the asymptotic
\be\label{devxias}
\xi(\lambda; \Delta,\mathring{\Delta})=-(K-1)(\ln\lambda^2)^{-1}+O((\ln \lambda)^{-2}),\quad \lambda\to 0+.
\ee

 Introduce a cutoff function $\chi\in C^\infty(\Bbb R)$ such that $\chi(\mu)=1$ for $\mu<1/2$ and $\chi(\mu)=0$ for $\mu>3/4$.
Following the scheme in~\cite{Hassell-Zelditch} we write
$$
\Tr \bigl(e^{-t\Delta }- e^{-t\mathring\Delta}\bigr)=e_1(t)+e_2(t),
$$
where
$$
e_1(t)=-t\int_0^\infty e^{-t\mu^2}\chi(\mu)\xi(\mu; \Delta,\mathring{\Delta})2\mu\,d\mu,
$$
$$
e_2(t)=-t\int_0^\infty e^{-t\mu^2}(1-\chi(\mu)) \xi(\mu; \Delta,\mathring{\Delta})2\mu\,d\mu;
$$
cf.~\eqref{Krein}. Note that  $e_2$ is exponentially decreasing as $t\to+\infty$. Thanks to the short time asymptotic~\eqref{ST} and smoothness of $e_1$ at $t=0$, we see that $e_2(t)$ has a short time asymptotic of the same form. Therefore for $\lambda^2\leq 0$ the holomorphic in $\Re s>1 $ zeta function
$$
\zeta_2(s; \lambda^2)= \int_0^\infty\frac{t^{s-1} e^{t \lambda^2 } }{\Gamma(s)} e_2(t)\,dt
$$
continues as a meromorphic function to  $\Bbb C$ with no pole at zero and  $ \zeta_2'(0,\lambda^2)\to \zeta_2'(0,0)$ as $\lambda^2\to0-$ by the usual argument.

We are now in position to define the regularized determinant $\det_\zeta(\Delta,\mathring{\Delta})$
We start from
$
\zeta(s;\Delta,\mathring\Delta) =\zeta_1(s;0)+\zeta_2(s;0),
$
where $\zeta_1(s;0)$ is defined by
\be\label{devlt0}
\zeta_1(s; 0)= \int_0^\infty\frac{t^{s-1}}{\Gamma(s)} e_1(t)\,dt=s\int_0^\infty (-\mu^2)^{-s-1}\chi(\mu)\xi(\lambda; \Delta,\mathring{\Delta})2\mu\,d\mu.
\ee
 From this expression and~\eqref{dev2} one can easily see that $\zeta_1(s;0)$ is a holomorphic function in $\Re s<0$, and we already know that
$\zeta_2(s;0)$is a meromorphic function of $s\in \Bbb C$ with no pole at $0$.

The asymptotic behaviour of $\zeta(s;0)$ near $s=0$ is given by the following proposition.
\begin{prop}\label{devp2}
Set
\begin{equation*}
\begin{split}
Z'_0:=& \zeta_2'(0;0)+(K-1)\lim_{\delta\to0+}\left(\int_\delta^\infty \frac{\chi(\mu)d\mu}{\mu\ln\mu}+\ln\ln\frac{1}{\delta}\right)\\
&- 2\int_0^\infty \mu^{-1}\chi(\mu)  \bigl(\xi(\mu; \Delta,\mathring{\Delta})-(K-1)(\ln\mu^2)^{-1} \bigr)\,d\mu+(K-1)(\gamma+\ln 2).
\end{split}
\end{equation*}
When $s\to 0-$ we have
$$
\zeta(s;\Delta,\mathring\Delta)=  \zeta_2(0;0)+s(K-1)\ln (-s) +s Z'_0 +o(s).
$$
\end{prop}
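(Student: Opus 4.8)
The plan is to compute the small-$s$ expansion of $\zeta(s;\Delta,\mathring\Delta) = \zeta_1(s;0) + \zeta_2(s;0)$ term by term. For $\zeta_2(s;0)$ this is immediate: it is meromorphic with no pole at $0$, so $\zeta_2(s;0) = \zeta_2(0;0) + s\zeta_2'(0;0) + o(s)$ as $s\to 0$. All the subtlety is in $\zeta_1(s;0)$, whose logarithmic term $s(K-1)\ln(-s)$ is the signature of the fact that $\xi(\mu;\Delta,\mathring\Delta)$ decays only like $(K-1)(\ln\mu^2)^{-1}$ near $\mu=0$ rather than being bounded.

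First I would rewrite \eqref{devlt0} as
$$
\zeta_1(s;0) = s\int_0^\infty (-\mu^2)^{-s-1}\chi(\mu)\,\xi(\mu;\Delta,\mathring\Delta)\,2\mu\,d\mu,
$$
and split the integrand by adding and subtracting the model tail $(K-1)(\ln\mu^2)^{-1}$. The ``remainder'' piece, with $\xi - (K-1)(\ln\mu^2)^{-1} = O((\ln\mu)^{-2})$ by~\eqref{devxias}, gives an integral that converges absolutely at $\mu=0$ after removing the overall factor $s$, and the computation of its $s\to 0$ limit is routine: setting $(-\mu^2)^{-s-1} = \mu^{-2s-2}e^{-i\pi(s+1)}$ (or the appropriate branch, consistent with the definition of the relative determinant for $\lambda^2<0$), one expands $\mu^{-2s} = 1 - 2s\ln\mu + o(s)$ and extracts the $O(s)$ coefficient; this produces the term $-2\int_0^\infty \mu^{-1}\chi(\mu)(\xi-(K-1)(\ln\mu^2)^{-1})\,d\mu$ together with a contribution to the constant $Z_0'$. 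The main work is the model piece
$$
s(K-1)\int_0^\infty (-\mu^2)^{-s-1}\,\chi(\mu)\,(\ln\mu^2)^{-1}\,2\mu\,d\mu.
$$
Changing variables to $x=\mu^2$ this becomes $s(K-1)\int_0^\infty x^{-s-1}\chi(\sqrt x)(\ln x)^{-1}\,dx$ times a branch factor; the factor $(\ln x)^{-1}$ is exactly what turns the naive pole/finite behavior into a $\ln(-s)$. I would evaluate it by writing $x^{-s-1}(\ln x)^{-1} = -\partial_s \int_0^1 x^{-us-1}\,du$ (valid up to handling the $x\to 0$ endpoint via the cutoff), or more directly by recognizing $\int_0^\infty x^{-s-1}(\ln x)^{-1}\,dx$-type integrals via the substitution $x = e^{-t}$, reducing to $\int (\cdots) t^{-1}e^{-st}\,dt$, whose regularized $s\to0$ behavior is $-\ln s + \text{const}$. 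Bookkeeping the cutoff $\chi$ and the Euler-constant terms here is what yields both the $s(K-1)\ln(-s)$ term and the explicit $(K-1)\lim_{\delta\to0+}(\int_\delta^\infty \chi(\mu)(\mu\ln\mu)^{-1}d\mu + \ln\ln(1/\delta))$ and $(K-1)(\gamma+\ln 2)$ pieces of $Z_0'$.

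The hard part will be tracking the precise constant $Z_0'$ through the branch choice $(-\mu^2)^{-s-1}$, the change of variables, and the regularization of the logarithmically-divergent model integral — in particular verifying that the auxiliary divergences introduced by splitting off $(\ln\mu^2)^{-1}$ (the $\ln\ln(1/\delta)$ counterterm) cancel exactly against the corresponding divergence in the model computation, so that the stated $Z_0'$ is finite and correct. I would also need to justify interchanging $\partial_s$ (or the $s\to0$ limit) with the $\mu$-integral, which follows from the uniform bound \eqref{devxias} on the cutoff's support together with dominated convergence. Once the model integral is evaluated, assembling $\zeta_1(s;0)+\zeta_2(s;0)$ and collecting the $s^0$, $s\ln(-s)$, and $s^1$ coefficients gives exactly the claimed expansion, with the $s^0$ coefficient equal to $\zeta_2(0;0)$ since $\zeta_1(s;0)$ vanishes to first order at $s=0$.
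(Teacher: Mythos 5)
Your proposal follows essentially the same route as the paper: expand $\zeta_2(s;0)$ trivially, split $\zeta_1(s;0)$ by adding and subtracting the model tail $(K-1)(\ln\mu^2)^{-1}$, and evaluate the resulting logarithmically singular model integral (which the paper simply quotes from Hassell--Zelditch, p.~13, rather than recomputing via your $x=e^{-t}$ substitution) to produce the $s(K-1)\ln(-s)$ term together with the $\gamma+\ln 2$ and $\ln\ln(1/\delta)$ constants. One small caveat: by \eqref{devxias} the combination that is $O((\ln\mu)^{-2})$ and hence integrable against $\mu^{-1}\,d\mu$ is $\xi(\mu;\Delta,\mathring\Delta)+(K-1)(\ln\mu^2)^{-1}$, not $\xi-(K-1)(\ln\mu^2)^{-1}$ as you assert --- you have inherited a sign inconsistency between the proposition statement and the paper's own proof, and the $+$ sign is the one that makes the argument work.
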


This proposition gives way to the following definition of the regularized relative determinant of $\Delta$ in case $K>1$.

\begin{definition}
 $$\det_\zeta (\Delta, \mathring{\Delta}):= e^{-Z'_0}\,.$$
\end{definition}

\begin{proof}  Since $\zeta_2(s;0)$ is a meromorphic function of $s$ with no pole at zero, as $s\to 0-$
we have
$$
\zeta(s;\Delta,\mathring\Delta)=\zeta_1(s;0)+\zeta_2(0;0)+s\zeta_2'(0;0)+O(s^2).
$$
It remains to study the behaviour of
$$
\begin{aligned}
\zeta_1(s;0)=s\int_0^\infty (-\mu^2)^{-s-1}\chi(\mu)\xi(\mu; \Delta,\mathring{\Delta})2\mu\,d\mu
\end{aligned}
$$
as $s\to0-$. We represent the last integral as a sum of two integrals.
Due to~\eqref{devxias} the first integral
$$
s\int_0^\infty (-\mu^2)^{-s-1}\chi(\mu)  \bigl(\xi(\mu; \Delta,\mathring{\Delta})+(K-1)(\ln\mu^2)^{-1} \bigr)2\mu\,d\mu
$$
converges uniformly in $s\leq 0$ and thus gives the contribution
$$
-s\int_0^\infty 2\mu^{-1}\chi(\mu)  \bigl(\xi(\mu; \Delta,\mathring{\Delta})+(K-1)(\ln\mu^2)^{-1} \bigr)\,d\mu
$$
into the expansion of $\zeta(s;\Delta,\mathring\Delta)$. For the second integral we have
\begin{gather*}
s(1-K)\int_0^\infty (-\mu^2)^{-s-1}\chi(\mu) (\ln\mu^2)^{-1}
2\mu\,d\mu = \\
s(1-K)\left(-\ln(-s)+\gamma+\ln 2 -\lim_{\delta\to0+}\left(\int_\delta^\infty \frac{\chi(\mu)d\mu}{\mu\ln\mu}+\ln\ln\frac{1}{\delta}\right)+o(1)\right);
\end{gather*}
see~\cite[p.13]{Hassell-Zelditch}.
\end{proof}

The proof of the gluing formula will also require that we understand the limit when $\lambda$ goes to $0.$
At this stage we have
$$
\zeta(s;\Delta-\lambda^2,\mathring\Delta -\lambda^2)=\zeta_1(s;\lambda^2)+\zeta_2(s;\lambda^2),
$$
where only properties of the zeta function
\be\label{devlt}
\zeta_1(s; \lambda^2)= \int_0^\infty\frac{t^{s-1} e^{t \lambda^2 } }{\Gamma(s)} e_1(t)\,dt=s\int_0^\infty (\lambda^2-\mu^2)^{-s-1}\chi(\mu)\xi(\lambda; \Delta,\mathring{\Delta})2\mu\,d\mu
\ee
remain unknown. Notice that  the last integrand is compactly supported and therefore the integral converges uniformly near $s=0$ for fixed $\lambda^2<0$ due to~\eqref{devxias}. We get
\be\label{devstst}
 \zeta_1'(0; \lambda^2)=\int_0^\infty (\lambda^2-\mu^2)^{-1}\chi(\mu)\xi(\mu; \Delta,\mathring{\Delta})2\mu\,d\mu.
\ee

\begin{prop}\label{devp1}
As $\lambda^2\to 0-$ we have
$$
\begin{aligned}
\zeta'(0;\Delta-\lambda^2,\mathring\Delta -\lambda^2)=  \ln\left(\ln\frac i \lambda\right)^{1-K} +(K-1)\lim_{\delta\to0+}\left(\int_\delta^\infty\frac{\chi(\mu)\,d\mu}{\mu\ln\mu}+\ln\ln\frac{1}{\delta}\right)\\
-2\int_0^\infty\mu^{-1}\chi(\mu)\Bigl(\xi(\mu; \Delta,\mathring{\Delta})+(K-1)(\ln\mu^2)^{-1}\Bigr)\,d\mu +\zeta_2'(0;0)+o(1).
\end{aligned}
$$
\end{prop}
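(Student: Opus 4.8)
The plan is to differentiate the splitting $\zeta(s;\Delta-\lambda^2,\mathring\Delta-\lambda^2)=\zeta_1(s;\lambda^2)+\zeta_2(s;\lambda^2)$ at $s=0$ and pass to the limit $\lambda^2\to0-$ in each summand. The $\zeta_2$-part is already in hand: we recalled above that $\zeta_2(s;\lambda^2)$ has no pole at $s=0$ and that $\zeta_2'(0;\lambda^2)\to\zeta_2'(0;0)$ as $\lambda^2\to0-$, which supplies the last term $\zeta_2'(0;0)$ of the asserted expansion. Everything thus reduces to the behaviour of $\zeta_1'(0;\lambda^2)$, which by~\eqref{devstst} equals $\int_0^\infty(\lambda^2-\mu^2)^{-1}\chi(\mu)\xi(\mu;\Delta,\mathring\Delta)\,2\mu\,d\mu$, a convergent integral for each fixed $\lambda^2<0$ since $\chi$ is compactly supported. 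I would write $\xi(\mu;\Delta,\mathring\Delta)=\bigl(\xi(\mu;\Delta,\mathring\Delta)+(K-1)(\ln\mu^2)^{-1}\bigr)-(K-1)(\ln\mu^2)^{-1}$ and split accordingly, $\zeta_1'(0;\lambda^2)=I_A(\lambda^2)+I_B(\lambda^2)$. For $I_A$, the asymptotic~\eqref{devxias} gives $\chi(\mu)\bigl(\xi(\mu;\Delta,\mathring\Delta)+(K-1)(\ln\mu^2)^{-1}\bigr)2\mu=O\bigl(\mu(\ln\mu)^{-2}\bigr)$ near $\mu=0$, while $|\lambda^2-\mu^2|^{-1}\le\mu^{-2}$ for $\lambda^2\le0$; so the integrand is dominated by a fixed $L^1$ function and dominated convergence yields $I_A(\lambda^2)\to-2\int_0^\infty\mu^{-1}\chi(\mu)\bigl(\xi(\mu;\Delta,\mathring\Delta)+(K-1)(\ln\mu^2)^{-1}\bigr)\,d\mu$, i.e. the third term in the claim.

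The heart of the matter is the asymptotics, as $\lambda^2\to0-$, of $I_B(\lambda^2)=(K-1)\int_0^\infty\frac{2\mu\,\chi(\mu)}{(\mu^2-\lambda^2)\ln\mu^2}\,d\mu$; this term carries the doubly-logarithmic divergence, and its evaluation is the analogue of the computation in the proof of Proposition~\ref{devp2} and of~\cite[p.~13]{Hassell-Zelditch}. I would set $a=\sqrt{-\lambda^2}>0$, so that $\lambda=ia$, $\ln(i/\lambda)=\ln(1/a)$ and $\ln\mu^2=2\ln\mu$, and split the integral at the intermediate scale $\mu=\sqrt a$. On the outer region $\mu>\sqrt a$ one has $\mu^2-\lambda^2=\mu^2+a^2=\mu^2\bigl(1+O(a)\bigr)$ uniformly, with error of order $a\ln\ln\frac{1}{a}=o(1)$, so the outer part equals $\int_{\sqrt a}^\infty\frac{\chi(\mu)}{\mu\ln\mu}\,d\mu+o(1)$, and by the very definition of $\lim_{\delta\to0+}\bigl(\int_\delta^\infty\frac{\chi(\mu)\,d\mu}{\mu\ln\mu}+\ln\ln\frac{1}{\delta}\bigr)$, applied with $\delta=\sqrt a$, this is $-\ln\ln\frac{1}{a}+\ln2+\lim_{\delta\to0+}\bigl(\int_\delta^\infty\frac{\chi(\mu)\,d\mu}{\mu\ln\mu}+\ln\ln\frac{1}{\delta}\bigr)+o(1)$. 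On the inner region $\mu<\sqrt a$, where $\chi\equiv1$, the rescaling $\mu^2=a^2w$ turns the integral into $\int_0^{1/a}\frac{dw}{(w+1)\bigl(\ln w-\ln\frac{1}{a^2}\bigr)}$; splitting this at $w=1$ and substituting $w=e^\sigma$ on the range $(1,1/a)$ makes it elementary and shows it tends to $-\ln2$. Adding the two regions and restoring the factor $K-1$ gives $I_B(\lambda^2)=(1-K)\ln\ln\frac{1}{a}+(K-1)\lim_{\delta\to0+}\bigl(\int_\delta^\infty\frac{\chi(\mu)\,d\mu}{\mu\ln\mu}+\ln\ln\frac{1}{\delta}\bigr)+o(1)$, and since $(1-K)\ln\ln\frac{1}{a}=\ln\bigl(\ln\frac{i}{\lambda}\bigr)^{1-K}$ this recovers the first two terms of the claim.

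Summing $I_A(\lambda^2)$, $I_B(\lambda^2)$ and $\zeta_2'(0;\lambda^2)$ and letting $\lambda^2\to0-$ then produces the stated expansion. I expect the $I_B$ computation to be the main obstacle: one must track all the $O(1)$ constants generated by the two changes of variables and check that the several $\ln2$'s cancel while the remainder assembles precisely into $\lim_{\delta\to0+}\bigl(\int_\delta^\infty\frac{\chi(\mu)\,d\mu}{\mu\ln\mu}+\ln\ln\frac{1}{\delta}\bigr)$. The other ingredients --- dominated convergence for $I_A$, the elementary bounds on the inner and outer pieces of $I_B$, and the already established convergence $\zeta_2'(0;\lambda^2)\to\zeta_2'(0;0)$ --- are routine.
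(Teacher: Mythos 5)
Your proposal is correct and follows the same decomposition as the paper: split $\xi$ as $\bigl(\xi+(K-1)(\ln\mu^2)^{-1}\bigr)-(K-1)(\ln\mu^2)^{-1}$, pass to the limit in the regularized part by uniform (dominated) convergence, and extract the $\ln\ln\frac{i}{\lambda}$ divergence from the remaining model integral. The only difference is that the paper simply cites~\cite[p.~12 and appendix]{Hassell-Zelditch} for the identity $\int_0^\infty (\lambda^2-\mu^2)^{-1}\chi(\mu)(\ln\mu^2)^{-1}2\mu\,d\mu=\ln\ln\frac{i}{\lambda}-\lim_{\delta\to0+}\bigl(\int_\delta^\infty\frac{\chi(\mu)\,d\mu}{\mu\ln\mu}+\ln\ln\frac1\delta\bigr)+o(1)$, whereas you prove it directly by splitting at $\mu=\sqrt{a}$; your bookkeeping is right --- the inner integral tends to $-\ln 2$, the outer one contributes $+\ln 2$ via $\ln\ln\frac{1}{\sqrt a}=\ln\ln\frac1a-\ln2$, and the two cancel as claimed.
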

\begin{proof} We only need to study the behaviour of $\zeta'(0;\lambda^2)$ in~\eqref{devstst} as $\lambda^2\to0-$.
Thanks to~\eqref{devxias} the integral
$$
\int_0^\infty (\lambda^2-\mu^2)^{-1}\chi(\mu)\Bigl(\xi(\mu; \Delta,\mathring{\Delta})+(K-1)(\ln\mu^2)^{-1} \Bigr)2\mu\,d\mu
$$
converges uniformly in $\lambda^2\leq 0$ and thus tends to
$$
-\int_0^\infty 2 \mu^{-1}\chi(\mu)\Bigl(\xi(\mu; \Delta,\mathring{\Delta})+(K-1)(\ln\mu^2)^{-1} \Bigr)\,d\mu
$$
as $\lambda^2\to0-$. It remains to note that
$$
\int_0^\infty (\lambda^2-\mu^2)^{-1}\chi(\mu)(\ln\mu^2)^{-1}2\mu\,d\mu=\ln \ln\frac{i}{\lambda}-\lim_{\delta\to 0+}\left(\int_\delta^\infty \frac{\chi(\mu)\,d\mu}{\mu\ln\mu}+\ln\ln \frac 1 \delta  \right)+o(1);
$$
as $\lambda^2\to 0-$; see~\cite[p. 12 and appendix]{Hassell-Zelditch}.
\end{proof}

Note that  by definition of $\det_\zeta (\Delta,\mathring{\Delta})=e^{-Z'_0}$ we have
 $$\zeta'(0;\Delta-\lambda^2,\mathring\Delta -\lambda^2)=  \ln\left(\ln\frac i \lambda\right)^{1-K} -\bigl(-Z'_0-(K-1)(\gamma+\ln 2)\bigr) +o(1),\quad \lambda^2\to0-,$$ see Prop.~\ref{devp1}.

\begin{proof}[Proof of Theorem~\ref{M} in the general case]
From Propositions~\ref{devp1} and~\ref{devp2} we immediately get
\be\label{DD}
\left(\ln\frac{i}{\lambda}\right)^{1-K}\det_\zeta(\Delta-\lambda^2;\mathring{\Delta}-\lambda^2)\to e^{(1-K)(\gamma+\ln 2)}\det_\zeta^*(\Delta,\mathring{\Delta}),\quad\lambda^2\to0-.
\ee
We  pass in~\eqref{sg1} to the limit as $\lambda^2\to0-$. Taking into account~\eqref{NNK} and~\eqref{DD} we obtain
$$
\frac{\det_\zeta(\Delta,\mathring{\Delta})\det_\zeta\mathring\Delta^D_-}{\det_\zeta\Delta_-^D}=\bigl(Re^{\gamma+\ln 2}\bigr)^{K-1}\frac{\det_\zeta^*\mathcal N(0)}{\det_\zeta^*\mathring{\mathcal N}(0)}.
$$
This proves Theorem~\ref{M}, where $\mathcal N\equiv\mathcal N(0)$ and the constant $$C=\bigl(R e^{\gamma+\ln 2}\bigr)^{K-1}/\bigl(\det_\zeta\mathring\Delta^D_- \det_\zeta^*\mathring{\mathcal N}(0)\bigr)$$
is moduli independent.
\end{proof}

\begin{remark}\label{rk:mgm}
The proof of the gluing formula holds verbatim for a more general class of metrics under the following two assumptions.
First, the structure at infinity should be given by a finite union of conical/Euclidean ends. Second, we have to assume that nothing bad happens with
the Laplace operator $\Delta_-^D$ of the compact part. In particular,  it should have a well-defined zeta-function that extends
to the complex plane with no pole at $0$. This works for instance if the metric is smooth in the compact part or, if it is flat with conical singularities
and the Friedrichs extension is chosen.
\end{remark}
\subsection  {Closing the Euclidean (conical) ends with the help of gluing formulas}\label{sec:Closing}
Let $R$ be a sufficiently large positive number such that all the critical values of the meromorphic function $f$ lie in the ball $\{|z|<R\}$.

In the holomorphic local parameter $\eta_j=y^{-1/k_j}$ in a vicinity $U_j$ ($|y_j|> R$) of the $j$-th conical end  of the angle $2\pi k_j$ ($k_j\geq 1$)
of the Riemannian manifold $(X, |df|^2)$ (i. e. a pole of $f$ of order $k_j$)
the metric $\bfm=|df|^2$ takes the form
$$\bfm=k_j^2\frac{|d\eta_j|^2}{|\eta_j|^{2k_j+2}}\ \ .$$

Let $\chi_j$ be a smooth function on ${\mathbb C}$ such that $\chi_j(\eta)=\chi_j(|\eta|)$, $|\chi_j(\eta)|\leq 1$,
$\chi_j(\eta)=0$ if $|\eta|>(R+1)^{-1/k_j}$, $\chi_j(\eta)=1$ if $|\eta|<(R+2)^{-1/k_j}$. Introduce the metric $\tbfm$
on $X$ such that
$$\tbfm=
\begin{cases} \bfm \ \ \ {\rm for}\ \ \ \ |z|<R\\
[1+(|\eta_j|^{2k_j+2}-1)\chi_j(\eta_j)]\bfm \ \ \ {\rm in}\ \ \ U_j\,.
\end{cases}
$$

Since the (Friedrichs extension of) the Laplace operator $\Delta^{\tbfm}$ has discrete spectrum and the corresponding operator $\zeta$-function is regular at $s=0$ (see e. g. \cite{K-LN}and references therein), one can define the determinant ${\det}^*\Delta^{\tbfm}$ via usual Ray-Singer zeta regularization.
Moreover, for  this determinant the usual BFK gluing formula (\cite{BFK}, Theorem $B^*$) holds (under the condition that the contour cutting the surface $X$ does not pass through
the conical singularities of the metric $\tbfm$).
Applying this standard BFK gluing formula, we get
\begin{equation}\label{smoothing}
\ln {\rm det}_\zeta^*\Delta^{\tbfm}=\ln C_0+\ln {\rm det}_\zeta \Delta_-^D+\ln {\rm det}^*_\zeta {\cal N}+\ln{\rm det}\Delta^{\tbfm}_{ext}\,,
\end{equation}
where $\Delta^{\tbfm}_{ext}$ is the operator of the Dirichlet problem for $\Delta^{\tbfm}$ in the union $\cup_j U_j.$ Using conformal invariance we see that
${\cal N}$ is the same as in Theorem 1 and $C_0$ is a moduli independent constant ($C_0=\frac{{\rm Area}(X, \tbfm)}{{\rm length}(\Sigma)}$).

Now equation (\ref{smoothing}) and Theorem 1 imply the following proposition.
\begin{proposition}\label{relord}
 The relative zeta regularized determinant $\det_\zeta (\Delta,\mathring\Delta)$ and the zeta-regularized determinant
${\rm det}_\zeta^*\Delta^{\tbfm}$ has the same variations with respect to moduli i. e. one has
\begin{equation}
\partial_{z_k}\ln \det_\zeta (\Delta,\mathring\Delta)=\partial_{z_k} \ln {\rm det}^*_\zeta\Delta^{\tbfm} \end{equation}
for $k=1, \dots, M$.
\end{proposition}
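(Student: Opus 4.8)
The plan is to obtain the statement by comparing two gluing identities written for the \emph{same} cut $\Sigma$. On one hand, Theorem~\ref{M} gives
\[
\ln\det_\zeta(\Delta,\mathring\Delta)=\ln C+\ln\det_\zeta^*\mathcal N+\ln\det_\zeta\Delta_-^D ,
\]
and on the other hand the classical BFK formula \eqref{smoothing} for the smoothed metric reads
\[
\ln\det_\zeta^*\Delta^{\tbfm}=\ln C_0+\ln\det_\zeta\Delta_-^D+\ln\det_\zeta^*\mathcal N+\ln\det\Delta^{\tbfm}_{ext}.
\]
First I would make sure that the two inner factors $\det_\zeta\Delta_-^D$ and $\det_\zeta^*\mathcal N$ occurring here are literally the same objects in both formulas. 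This uses the placement of $\Sigma$: the compact part $X_-$ lies in $\{|z|<R\}$, where $\tbfm=\bfm$, so the interior Dirichlet Laplacian $\Delta_-^D$ and hence the interior Dirichlet-to-Neumann operator $\mathcal N_-(0)$ are unchanged; and the exterior operator $\mathcal N_+(0)$ is unchanged because in two dimensions the harmonic Dirichlet-to-Neumann operator depends only on the conformal class of the metric, while a bounded harmonic function on a punctured disk extends across the puncture, so harmonic extension into the modified end $U_j$ agrees with harmonic extension into the infinite cone. This is exactly the content of the remark "using conformal invariance we see that $\mathcal N$ is the same as in Theorem~1" made after \eqref{smoothing}.

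Subtracting the two displays, all the moduli-dependent material cancels and we are left with
\[
\ln\det_\zeta^*\Delta^{\tbfm}-\ln\det_\zeta(\Delta,\mathring\Delta)=\bigl(\ln C_0-\ln C\bigr)+\ln\det\Delta^{\tbfm}_{ext}.
\]
Here $C$ is moduli independent by Theorem~\ref{M}, and $C_0=\mathrm{Area}(X,\tbfm)/\mathrm{length}(\Sigma)$ is moduli independent as recorded just before the statement. The remaining point — the only one genuinely requiring an argument — is that $\det\Delta^{\tbfm}_{ext}$ is moduli independent as well. Although the distinguished local parameter $\eta_j$ is attached to $f$, the metric $\tbfm$ expressed in $\eta_j$ is the completely explicit, $f$-free metric $k_j^2\bigl[|\eta_j|^{-2k_j-2}\bigl(1-\chi_j(\eta_j)\bigr)+\chi_j(\eta_j)\bigr]\,|d\eta_j|^2$ on the fixed disk $\{|\eta_j|\le (R+1)^{-1/k_j}\}$; hence $(U_j,\tbfm)$, together with its boundary, is one and the same Riemannian manifold for every point of $\Hcal(N,M)$, and the Dirichlet determinant $\det\Delta^{\tbfm}_{ext}$ does not depend on the critical values $z_k$.

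Applying $\partial_{z_k}$ to the last display therefore annihilates the right-hand side and yields $\partial_{z_k}\ln\det_\zeta^*\Delta^{\tbfm}=\partial_{z_k}\ln\det_\zeta(\Delta,\mathring\Delta)$ for $k=1,\dots,M$, which is the assertion; this mirrors the way analogous statements were proved in the cited works. No further analysis is involved. The one step that has to be checked rather than quoted is the identification of the two inner factors in the gluing formulas — that is, that $\Sigma$ has been chosen so the metric surgery happens strictly on the end side of $\Sigma$ and in a conformally invariant way — together with the observation that, read in the distinguished parameter, the exterior operator $\Delta^{\tbfm}_{ext}$ carries no moduli dependence.
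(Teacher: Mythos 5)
Your proposal is correct and follows exactly the paper's argument: the paper derives the proposition by comparing the standard BFK formula \eqref{smoothing} for $(X,\tbfm)$ with Theorem~\ref{M}, observing that the shared factors $\det_\zeta\Delta_-^D$ and $\det_\zeta^*\mathcal N$ cancel and that $C$, $C_0$ and $\det\Delta^{\tbfm}_{ext}$ are moduli independent. You merely spell out the details (conformal invariance of $\mathcal N$, and the fact that $(U_j,\tbfm)$ written in the parameter $\eta_j$ carries no dependence on the $z_k$) that the paper leaves implicit.
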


Thus, the relative determinant of the Laplacian on a noncompact surface  $(X, \bfm)$ with conical points and conical/Euclidean ends can be studied via consideration of the zeta-regularized determinant of Laplacian on a compact surface $(X, \tbfm)$ with conical points. The latter surface is flat everywhere except the conical singularities (whose positions vary when one changes the moduli $z_1, \dots, z_M$) and smooth ends of nonzero curvature which remain unchanged.

In the next two sections we study some spectral properties of compact surfaces with conical points.
The final goal is to derive the variational formulas for $\ln {\rm det}^*_\zeta\Delta^{\tbfm}$.

\section{$S$-matrix}\label{SM}
In this section we introduce the so-called $S$-matrix and relate its behavior at $\lambda=0$ with the Schiffer
projective connection. The definition of the $S$-matrix originates in
the general theory of boundary triplets (see \cite{Grubb} sect. 13)
and, more specifically, the general theory of self-adjoint extensions
of elliptic operators in singular settings (see \cite{NP}).  Here we
will follow closely \cite{HK}. However, we should point out that some normalization constants in the latter
reference are erroneous and that the normalization we use here is slightly different.

It is convenient to introduce the $S$-matric in the following general setting.
\subsection{General Setting and normalizations}
Let $(X,\tbfm)$ be a compact singular $2$d Riemannian manifold (possibly with boundary).
Let $P$ be an interior point of $X$ such that in a neighborhood $V$ of it $X$ is
isometric to a neighborhood of the tip of the Euclidean cone of angle $2\ell \pi.$

We set $X_0:= X\backslash \{ P \}$ and $X_\eps := X \backslash B(p,\eps).$
We also denote by $\gamma_r$ the circle of radius $r$ centered at $P$.

We will occasionnally use several different ways of parametrizing $V.$
\begin{itemize}
\item Polar coordinates $(r,\theta) \in (0,r_\ma)\times \R/ 2\ell \pi\Z,$
\item Local complex coordinate $z$. The $1$-form $dz$ is well-defined
on $V\backslash \{P \}$ and extends to $V$ as a holomorphic one form $\alp$ with a zero of
order $\ell-1$ at $P$. Note that it may not have a global holomorphic extension to $X$.
\item Distinguished complex parameter $y$ such that $\alp = \ell y^{\ell-1} dy$ near $P$.
\end{itemize}

We now want to consider the Laplace operator that is associated with $\tbfm.$
We assume that the set of singularities of $\tbfm$ consists of a finite number of conical points (in particular, it may consist of a single point
$P$).  Let $\Delta$  be the Friedrichs extension of the Laplace operator on $X$ with domain consisting of smooth functions that vanish near the singularities.

\begin{remark}
Actually the choice of extension (that is the prescription of a certain asymptotics at singular points to functions from the domain of the self-adjoint extension) should be made at each singularity (see \cite {HK}). Here we really care only about the choice of
extension at $P$, where we are choosing the Friedrichs extension (all the functions from the domain of this extension are bounded near $P$).  At other singularities one can chose any other extension, not necessarily the Friedrichs one.
\end{remark}

By definition we set $H^2(X)$ to be the domain of $\Delta$ and by $H^1(X)$ to be its form domain. We denote by $\Delta_0$
the restriction of $\Delta$ to functions in $H^2(X)$ that vanish near $p$ and
by $\Delta_0^*$ its formal adjoint. By choice,  the self-adjoint extension $\Delta$ corresponds to
the Friedrichs extension of $\Delta_0$. We will also denote by $H^2_0(X):= \dom \,\overline{\Delta_0}$.
Near $P$, we have
\[
\Delta_0^*=\, -4\partial_z \partial_{\overline{z}}
\,=\, -4\left( \ell^2|y|^{2(\ell-1)}\right)^{-1} \partial_y \partial_{\overline{y}}.
\]

Introduce a cut-off function $\rho$ such that $\rho$ has support in  $r\leq r_\ma$ and
equals $1$ near $r=0$. Define the functions $F^0$, $F^a_\nu$ and $F^h_\nu$ via
\begin{gather*}
F^0(z) \,= \,c_0 \ln (z\ovz)\rho(z)\,=\,c_0\ln (r^2)\rho(z),  \\
F^{a}_\nu(z) \,= c_\nu \overline{z}^{-\nu}\rho(z), ~~\nu= \frac{k}{\ell},~0<k<\ell\\
F^{h}_\nu(z) \,= c_\nu z^{-\nu}\rho(z), ~~\nu= \frac{k}{\ell},~0<k<\ell\,,\\
\end{gather*}
where
\begin{equation}\label{fixingc}
c_0 = \frac{1}{2\sqrt{\ell\pi}}, \quad
c_\nu =\frac{1}{2\sqrt{\nu \ell \pi}}.
\end{equation}

\begin{remark} The indices $a$ and $h$ correspond to ``antiholomorphic'' and ``holomorphic'' behavior of the corresponding functions $F$ at $0$.
In all the formulas below the index $\nu$ runs through the set $\{k/\ell\}_{k=1, \dots, \ell-1}$, where $2\pi \ell$ is the conical angle at $P$.
\end{remark}

Separating variables near $P$, it can be shown that any function in $\dom(\Delta_0^*)$ admits
the following expression (cf. \cite{K-LN}, \cite{Mo})
\begin{equation}\label{qe1}
\begin{split}
u = & \, c_0\Lambda^0(u) + c_0\Lambda^{0,-}(u) F^0(z) \\
& \,+\, \sum_{\nu} c_\nu \Lambda_\nu^{h,-}(u)z^{-\nu} \,+\, c_\nu \Lambda_\nu^{a,-}(u)\ovz^{-\nu}\\
&\,+\,\sum_\nu  c_\nu \Lambda_\nu^{h}(u)z^{\nu} \,+\, c_\nu \Lambda_\nu^{a}(u)\ovz^{-\nu} + u_0,
\end{split}
\end{equation}
where the $\Lambda$ are linear functionals on $\dom(\Delta_0^*)$ that vanish on $H^2_0$ and $u_0$
is in $H^2_0$.
Moreover, one has
\[
\ov{\Lambda_\nu^a(u)} \,=\, \Lambda_\nu^h(\ov{u}).
\]

For $u,v \in \dom(\Delta^*)$ we define
$$\Green(u,v):=\langle \Delta^* u, v\rangle -\langle u, \Delta^*v\rangle.$$
The Green formula implies
\begin{equation}
\Green(u,v) = \lim_{\eps\rightarrow 0} \frac{2}{i} \int_{\gamma_{\eps}} \partial_z u\ov{v} \,dz + u\partial_{\overline{z}}\ov{v} \,d\ovz,
\end{equation}\label{qe2}
where the circle $\gamma_\epsilon$ is positively oriented.
Since  $c_0$ and $c_\nu$ satisfy
$\displaystyle \frac{2c_0^2}{i}\int_{\gamma_\eps} \frac{dz}{z}=1$ and
$\displaystyle \frac{2\nu c_\nu^2}{i}\int_{\gamma_\eps} \frac{dz}{z}=1,$
the asymptotics (\ref{qe1}) and (\ref{qe2}) imply that
\begin{equation*}
\begin{split}
\Green(u,v) & = \Lambda^{0,-}(u)\ov{\Lambda^0(v)}-\Lambda^{0}(u)\ov{\Lambda^{0,-}(v)} \\
& \,+\,\sum_\nu \Lambda_\nu^{h,-}(u)\ov{\Lambda_\nu^{a}(v)} -\Lambda_\nu^a(u)\ov{\Lambda_\nu^{h,-}(v)} \\
& \,+\,\sum_\nu \Lambda_\nu^{h}(u)\ov{\Lambda_\nu^{a,-}(v)} -\Lambda_\nu^{a,-}(u)\ov{\Lambda_\nu^{h}(v)}. \\
\end{split}
\end{equation*}

The domain of the Friedrichs extension of $\Delta$ is characterized by requiring that all the coefficients with the superscript ``$-$'' vanish;  it follows from
 (\ref{qe2}) that the linear functionals $\Lambda^{a,h,0}_\nu$ are continuous over $H^2(X)$ and supported at $P$.
It can be proved that any linear functional $\Lambda$ that is continuous on $H^2(X)$ and supported at $P$ can be written
as a linear combination of the linear functionals $\Lambda_0,\,\Lambda_\nu ^a$ and $\Lambda_\nu^h.$
Finally, we notice that one has the following representation for the space $H^2_0(X)$:
\begin{equation*}
H^2_0(X) = \{ u\in H^2(X): ~ \Lambda^\sharp_\nu(u)= 0,~\forall \nu,\sharp=a,h~\;; \Lambda^0(u)=0\}\,.
\end{equation*}

\subsection{Definition of the $S$-matrix}
We follow \cite{HK}, paying special attention to conjugations and normalizing constants.

In the following the symbols $\sharp$ and $\dia$ are to be substituted by $0$, $h$ or $a$.
When the superscript is $0$ the subscript $\nu$ is $0$, when it is $a$ or $h,$ $\nu = \frac{j}{\ell}$ where $j$ ranges from $1$ to $\ell-1.$

We define
\begin{equation}
\begin{split}
f_\nu^\sharp(\cdot ;\,\lambda) & =\, (\Delta_0^* -\lambda)F_\nu^\sharp, \quad g_\nu^\sharp(\cdot ;\,\lambda) = -\left(\Delta-\lambda\right)^{-1} f_\nu^\sharp(\cdot ;\, \lambda), \\
G_\nu^\sharp(\cdot ; \,\lambda) &= F_\nu^\sharp \,+\, g_\nu^\sharp(\cdot ; \lambda), \quad
S^{\sharp \dia}_{\mu \nu}  = \Lambda^\sharp_\mu\left( g_\nu^\dia(\cdot ; \,\lambda) \right).\\
\end{split}
\end{equation}
Observe that by definition the $g$-functions belong to $H^2$, which makes the latter definition consistent when seeing
$\Lambda^\sharp_\mu$ as a linear functional over $H^2$.  Since $\Lambda^\sharp_\mu$ also makes sense as a linear functional
over $\ker (\Delta^*-\lambda)$, we may also write
\[
S^{\sharp \dia}_{\mu \nu}  = \Lambda^\sharp_\mu\left( G_\nu^\dia(\cdot ; \,\lambda) \right).
\]

\begin{remark}
The functions $F,~f,~g$ depend on the initial choice of $\rho$ but the linear functionals $\Lambda$ and the functions $G$ are cut-off independent.
\end{remark}

The $S$-matrix is defined by blocks :
\begin{equation}
S := \left(
\begin{array}{ccc}
S^{00} & S^{0h}_{0 \nu} & S^{0a}_{0 \nu} \\
S^{h0}_{\mu 0} & S^{hh}_{\mu\nu} & S^{ha}_{\mu\nu} \\
S^{a0}_{\mu 0} & S^{ah}_{\mu\nu} & S^{aa}_{\mu\nu}
\end{array}
\right).
\end{equation}
\hfill \\

\begin{remark}
If there are several conical points on the surface, then there are several ways to define the $S$-matrix depending on how
many points we want to take into account. The $S$-matrix defined in \cite{HK}  takes into account  all the conical points whereas the one defined here deals only with the conical point $P$ (even if there are other conical points on the surface).
Thus the $S$-matrix that is constructed here is only a part of the one from \cite{HK}.
\end{remark}

Applying Green's formula, we get
\begin{equation}\label{qe3}
\begin{split}
\Lambda^{\sharp}_\nu(u) & = \, \Green(u, \ov{F^{\sharp}})  = \, \dis \int (\Delta-\lambda)u\cdot  F_\nu^{\sharp} - u\cdot  f_{\nu}^\sharp( \cdot ; \lambda) dS\\
 & = \,\dis \int (\Delta-\lambda)u\cdot F_\nu^{\sharp} + u\cdot  (\Delta-\lambda)g_\nu^\sharp( \cdot ;\lambda) dS = \, \dis \int (\Delta-\lambda) u \cdot G_\nu^\sharp \, dS
\end{split}
\end{equation}
for any test function $u \in H^2(X)$ (here $dS$ is the area element on
$X$). We have used here that $\ov{F^{0,a,h}} = F^{0,h,a}\,\in
\dom(\Delta_0^*)$, $u$ and $g$ are in $H^2(X),$ and that $\Delta$ is
real (i.e. commutes with complex conjugation) and self-adjoint.

Applying (\ref{qe3}) to the $g$-functions gives the following alternative expressions for the $S$-matrix entries (we omit the dependence
on $\lambda$):
\begin{equation}
S^{\sharp\dia}_{\mu\nu} = \dis \int G^\sharp_\mu \left( \Delta-\lambda\right ) g_\nu^\dia  \, dS = \dis - \int G^\sharp_\mu f_\nu^\dia  \, dS.
\end{equation}

\begin{remark}
 The $S$-matrix allows the description of the elements of $\ker(\Delta_0^* -\lambda)$ in the following way.
For any element $u\in \dom(\Delta_0^*),$ denote by $L^{\pm}$ the column of its coefficients
$\Lambda^{\sharp,\pm}_\nu$ that describe the singular  behaviour of $u$ near $P$.  We have
\[
(\Delta_0^*-\lambda)u=0
\Leftrightarrow L^+=S(\lambda)L^-.
\]
This gives a (pure) formal analogy with a typical scattering
situation. 
In our setting, any solution to the equation $(\Delta_0^*-\lambda)u=0$ plays the role of scattered field, $L^\pm$ is the ``incoming'' and ``outgoing''
parts and the $S$-matrix  is the ``{\em scattering}'' matrix.
\end{remark}

\section{Basic properties of the $S$-matrix}

\subsubsection{Analyticity and complex conjugation}
From the analyticity of the resolvent we see that the $S$-matrix depends analytically on $\lambda$. Besides, the expression
\[
S^{\sharp\dia}_{\mu\nu}(\lambda)= \dis - \int G^\sharp_\mu(\cdot ; \lambda) f_\nu^\dia(\cdot; \lambda)  \, dS
\]
and the fact that
\begin{equation*}
\ov{f_\nu^{a,h}(\cdot,\lambda)} = f_\nu^{h,a}(\cdot; \ovl),\quad \ov{G^{a,h}(\cdot ; \lambda)}  = G_\nu^{h,a}(\cdot ; \ovl),
\end{equation*}
lead to the following identities :

\begin{equation}\label{eq:conj}
\ov{S^{hh}(\lambda)} \,=\,S^{aa}(\ovl), \quad
\ov{S^{ah}(\lambda)} \,=\, S^{ha}(\ovl).
\end{equation}

\subsubsection{Behavior for $\lambda$ going to $-\infty$}

For $r>0$ consider the equation
\[
-u''(r) -\frac{1}{r}u'(r) +\frac{\nu^2}{r^2}u(r)\,=\,\lambda u(r).
\]
Since $\nu\neq 0$, any solution to this equation has the following asymptotic behaviour near
zero :
\[
u(r) \,=\, a_-r^{-\nu}\,+\,a_+r^\nu +o(r^\nu).
\]
and the vector space of solutions that belongs to $L^2(rdr)$ is one-dimensional.
We set $k_\nu(r;\lambda)$ to be the unique solution to this equation which is in $L^2(rdr)$ and
that is normalized in such a way that
\begin{gather*}
F_\nu^{h}(r,\theta)-k_\nu(r;\lambda)\exp(-i \nu \theta) = O(r^\nu),\\
F_\nu^{a}(r,\theta)-k_\nu(r;\lambda)\exp(i \nu \theta) = O(r^\nu)
\end{gather*}
(i.e. we adjust the coefficient of $r^{-\nu}$ in $k_\nu^\sharp$ so that it coincides with the coefficient of $F_\nu^\sharp$).
By inserting a cut-off $\rho$, we define
\begin{gather*}
K_\nu^{h}(r,\theta ; \lambda) := k_\nu(r;\lambda)\exp(-i \nu\theta)\rho(r),\\
K_\nu^{a}(r,\theta ; \lambda) := k_\nu(r;\lambda)\exp(i\nu\theta)\rho(r)\\
\end{gather*}
as  functions on $X$. We compute $R_\nu^\sharp( \ \cdot\  ; \lambda) \,=\,\left(
  \Delta_0^*-\lambda\right ) K_\nu^{\sharp}$, where $\sharp=a,h$.

\begin{lemma}
For $\sharp = a,h$ we have
\begin{equation*}
G_\nu^\sharp(\  \cdot\  ; \lambda)=\, K_\nu^{\sharp} - \left[ \Delta-\lambda\right]^{-1}R_\nu^\sharp .
\end{equation*}
\end{lemma}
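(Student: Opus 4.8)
The statement to prove is the identity
\[
G_\nu^\sharp(\,\cdot\,;\lambda) = K_\nu^\sharp - [\Delta-\lambda]^{-1} R_\nu^\sharp, \qquad \sharp = a, h,
\]
where $R_\nu^\sharp = (\Delta_0^* - \lambda) K_\nu^\sharp$. The plan is to show that both sides of this equation differ from $F_\nu^\sharp$ by an element of $H^2(X)$ which solves the same inhomogeneous equation with the same right-hand side, and then invoke uniqueness.

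First I would record that, by the very definition of $G_\nu^\sharp$, we have $G_\nu^\sharp = F_\nu^\sharp + g_\nu^\sharp$ with $g_\nu^\sharp = -(\Delta-\lambda)^{-1} f_\nu^\sharp$ and $f_\nu^\sharp = (\Delta_0^* - \lambda) F_\nu^\sharp$. So $G_\nu^\sharp$ is characterized as the unique element of $\ker(\Delta_0^* - \lambda)$ whose singular expansion near $P$ has leading coefficient matching that of $F_\nu^\sharp$ (i.e. $\Lambda_\nu^{\sharp,-}(G_\nu^\sharp)$ equals the coefficient $c_\nu$ dictated by $F_\nu^\sharp$, and all other ``incoming'' coefficients vanish) and which lies in the form domain away from that leading singularity. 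Next I would verify that the right-hand side $K_\nu^\sharp - [\Delta-\lambda]^{-1} R_\nu^\sharp$ also lies in $\ker(\Delta_0^* - \lambda)$: indeed $(\Delta_0^* - \lambda)\bigl(K_\nu^\sharp - [\Delta-\lambda]^{-1} R_\nu^\sharp\bigr) = R_\nu^\sharp - R_\nu^\sharp = 0$, using that $[\Delta-\lambda]^{-1} R_\nu^\sharp \in H^2(X) = \dom(\Delta)$ so that $\Delta_0^*$ acts on it as $\Delta$.

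The crux is then to check that the two candidates have the same singular data near $P$, so that their difference lies in $H^2_0(X)$ and, being in the kernel of the injective operator $\Delta_0^* - \lambda$ restricted appropriately (equivalently, $\Delta - \lambda$ is invertible at regular $\lambda$), must vanish. For this I would compare leading terms: by the normalization chosen for $k_\nu(r;\lambda)$, the function $K_\nu^\sharp$ has the same $r^{-\nu}$-coefficient as $F_\nu^\sharp$, namely $c_\nu$, and $K_\nu^\sharp - F_\nu^\sharp = O(r^\nu)$ near $P$; meanwhile $[\Delta-\lambda]^{-1} R_\nu^\sharp \in H^2(X)$ is bounded near $P$ and contributes nothing to the $r^{-\nu}$ coefficient. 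Hence $\Lambda_\nu^{\sharp,-}$ of the right-hand side equals $c_\nu$, matching $G_\nu^\sharp$, and all other negative-index functionals vanish on the $H^2$-part by definition. Since $c_\nu^{-1}G_\nu^\sharp$ and $c_\nu^{-1}$ times the right-hand side are both solutions of $(\Delta_0^*-\lambda)u=0$ with identical ``incoming'' coefficients and with the regular part in $H^2$, their difference is an $H^2$-solution of $(\Delta-\lambda)u = 0$, hence zero for regular $\lambda$.

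The main obstacle I anticipate is bookkeeping with the cut-off $\rho$ and the conjugation/normalization conventions: one must be careful that $R_\nu^\sharp = (\Delta_0^*-\lambda)K_\nu^\sharp$ is genuinely a smooth compactly supported (hence $L^2$) function — the apparent singularity at $P$ is killed because $k_\nu(r;\lambda)$ exactly solves the Bessel-type radial equation, so $(\Delta_0^* - \lambda)(k_\nu(r;\lambda)e^{\mp i\nu\theta}) = 0$ near $P$ and only the $\rho'$-terms survive, away from $P$. Once that regularity is in hand, the identity is a one-line uniqueness argument; the work is entirely in justifying the singular-expansion comparison, which follows from the explicit normalization of $k_\nu$ stated just before the lemma.
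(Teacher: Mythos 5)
Your argument is correct and is essentially the paper's own proof: the paper simply notes that both sides lie in $\ker(\Delta_0^*-\lambda)$ and, by the normalization of $k_\nu$, share the same singular behaviour at $P$, so their difference is an $H^2$-solution of $(\Delta-\lambda)u=0$ and hence vanishes for $\lambda$ in the resolvent set. Your additional observation that $R_\nu^\sharp$ is smooth and supported where $\rho'\neq 0$ (so that $[\Delta-\lambda]^{-1}R_\nu^\sharp$ makes sense and lies in $H^2$) is exactly the right technical point to check.
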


\begin{proof}
By construction it is straightforward that both sides of the equation
are in  $\ker(\Delta_0^*-\lambda)$ and by choice of normalization, both share the same
singular behaviour.
\end{proof}

We define by $\kappa_\nu(\lambda)$ the coefficient of $r^\nu$ in the asymptotic expansion
of $k_\nu(\ \cdot\ ;\lambda)$.
\begin{corollary}
As $\Re \lambda $  goes to $-\infty$ we have
\begin{equation*}
\begin{split}
S^{hh}(\lambda)& =\,O(|\lambda|^{-\infty}),\quad
S^{aa}(\lambda)=\,O(|\lambda|^{-\infty}),\\
S^{ah}(\lambda) &=\, \mathrm{diag}(\kappa_\nu(\lambda))\,+\,O(|\lambda|^{-\infty}),\\
S^{ha}(\lambda)& =\,\mathrm{diag}(\kappa_\nu(\lambda))\,+\,O(|\lambda|^{-\infty}).\\
\end{split}
\end{equation*}
\end{corollary}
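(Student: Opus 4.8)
The plan is to use the Lemma established just before the Corollary, which expresses $G_\nu^\sharp(\,\cdot\,;\lambda) = K_\nu^\sharp - [\Delta-\lambda]^{-1}R_\nu^\sharp$, together with the representation $S^{\sharp\dia}_{\mu\nu}(\lambda) = -\int G_\mu^\sharp f_\nu^\dia\,dS = \Lambda_\mu^\sharp(G_\nu^\dia(\,\cdot\,;\lambda))$. First I would compute the coefficients $\Lambda_\mu^\sharp$ applied to $K_\nu^\dia$: by the normalization built into $k_\nu$, the function $K_\nu^h$ has singular part $c_\nu z^{-\nu}\rho$ plus a term $\kappa_\nu(\lambda)$ times $r^\nu e^{-i\nu\theta}$ (up to $o(r^\nu)$), i.e. its ``regular'' coefficient $\Lambda_\nu^h(K_\nu^h) = c_\nu^{-1}\kappa_\nu(\lambda)$ after matching normalizing constants, while $\Lambda_\mu^a(K_\nu^h) = 0$ for all $\mu$ and $\Lambda_\mu^h(K_\nu^h) = 0$ for $\mu\neq\nu$ since $K_\nu^h$ carries only the single angular harmonic $e^{-i\nu\theta}$. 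This already produces the diagonal matrices $\mathrm{diag}(\kappa_\nu(\lambda))$ in $S^{ah}$ and $S^{ha}$ and zero in the leading term of $S^{hh}$, $S^{aa}$ (the latter because $K_\nu^h$ contributes nothing to $h$-type functionals off-diagonal and nothing to $a$-type at all — one must be careful which of $S^{hh}$ versus $S^{ha}$ picks up the $\kappa_\nu$, tracking the conjugation conventions fixed in the excerpt).

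The substantive content is then the claim that the correction term $\Lambda_\mu^\sharp\big([\Delta-\lambda]^{-1}R_\nu^\dia\big)$ is $O(|\lambda|^{-\infty})$ as $\Re\lambda\to-\infty$. Here I would argue as follows. The function $R_\nu^\dia = (\Delta_0^*-\lambda)K_\nu^\dia$ is supported in the annular region where $\rho$ is not locally constant (the cut-off collar), hence is smooth, compactly supported away from $P$, and in particular bounded in every Sobolev norm uniformly in $\lambda$ except for the explicit linear-in-$\lambda$ term coming from $-\lambda K_\nu^\dia$; but on the collar $K_\nu^\dia = k_\nu(r;\lambda)e^{\mp i\nu\theta}\rho(r)$, and $k_\nu(r;\lambda)$ for $\Re\lambda\to-\infty$ (equivalently, for the modified-Bessel regime) decays exponentially like $e^{-\sqrt{|\lambda|}\,r}$ on the collar $r\asymp r_{\ma}$. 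So $\|R_\nu^\dia\|$ in any fixed norm is $O(e^{-c\sqrt{|\lambda|}})$. Then $\|[\Delta-\lambda]^{-1}R_\nu^\dia\|_{H^2} \le \|(\Delta-\lambda)^{-1}\|\cdot\|R_\nu^\dia\| \cdot(\text{const})$, and since $\lambda$ stays at bounded distance from the spectrum of $\Delta$ the resolvent norm is at worst polynomial in $\lambda$; the continuous functionals $\Lambda_\mu^\sharp$ are bounded on $H^2(X)$, so the whole correction is $O(e^{-c\sqrt{|\lambda|}}) = O(|\lambda|^{-\infty})$.

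I would also need to justify the exponential decay estimate on $k_\nu(r;\lambda)$ and on its coefficient — i.e. that $\kappa_\nu(\lambda)$ itself is the genuine leading term and is not swamped: this follows from the explicit solution $k_\nu(r;\lambda) = \text{const}\cdot K_\nu^{\mathrm{Bessel}}(\sqrt{-\lambda}\,r)$ (modified Bessel / Macdonald function) together with its known small-argument expansion $K_\nu(z)\sim \tfrac12\Gamma(\nu)(z/2)^{-\nu} + \tfrac12\Gamma(-\nu)(z/2)^{\nu}+\cdots$, giving $\kappa_\nu(\lambda)$ as an explicit power of $\sqrt{-\lambda}$ (in particular not exponentially small), and the large-argument asymptotics $K_\nu(z)\sim\sqrt{\pi/(2z)}\,e^{-z}$ giving the exponential decay on the collar. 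The main obstacle, such as it is, is bookkeeping: keeping straight the normalizing constants $c_0, c_\nu$ and the complex-conjugation rules $\overline{F^{a,h}} = F^{h,a}$ so that the $\kappa_\nu$ lands in $S^{ah}$ and $S^{ha}$ rather than in $S^{hh}$ or $S^{aa}$, and checking that the cut-off-dependent pieces all cancel because (by the Remark) the $G$-functions and the functionals $\Lambda$ are cut-off independent. No new analytic input beyond the resolvent bound away from spectrum and standard Bessel asymptotics is required.
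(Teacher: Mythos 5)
Your argument is essentially the paper's proof: the preceding Lemma reduces every entry to $\Lambda_\mu^\sharp(K_\nu^\flat)$ plus the correction $\Lambda_\mu^\sharp\bigl((\Delta-\lambda)^{-1}R_\nu^\flat\bigr)$, and the latter is $O(|\lambda|^{-\infty})$ because $\|R_\nu^\flat\|_{L^2}$ is exponentially small (the error term is supported on the collar where $\rho'\neq 0$, where the Macdonald function decays like $e^{-c\sqrt{|\lambda|}}$); the paper states exactly this, with less detail than you supply. One bookkeeping correction: since the regular part of $K_\nu^h=k_\nu(r;\lambda)e^{-i\nu\theta}\rho$ is $\kappa_\nu(\lambda)\,r^\nu e^{-i\nu\theta}=\kappa_\nu(\lambda)\,\overline{z}^{\,\nu}$, it is $\Lambda_\nu^a(K_\nu^h)$ (not $\Lambda_\nu^h(K_\nu^h)$) that equals $\kappa_\nu(\lambda)$ up to the normalization $c_\nu$, while $\Lambda_\mu^h(K_\nu^h)=0$ for all $\mu$; this is precisely what places $\kappa_\nu$ on the diagonal of $S^{ah}$ and leaves $S^{hh}$ in the $O(|\lambda|^{-\infty})$ class, consistent with your stated conclusion but opposite to the intermediate line you wrote.
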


\begin{proof}
The asymptotic expansion of Bessel functions implies  that
$\| R_\nu^\flat \|_{L^2} = O(|\lambda|^{-\infty})$.  Therefore
$\Lambda_\nu^\sharp\left(\left[ \Delta-\lambda\right]^{-1}R_\nu^\flat\right)\,=\,O(|\lambda|^{-\infty}).$
Thus all entries of the $S$-matrix are given by
$\Lambda_\mu^\sharp(K_\nu^\flat)$ up to $O(|\lambda|^{-\infty}).$ The
first term is seen to be $0$ except for the diagonal terms in $S^{ah}$
or $S^{ha}$ for which it is $\kappa_\nu(\lambda).$
\end{proof}

\begin{remark}
A different proof is given in \cite{HK} using the heat kernel.
\end{remark}

\subsubsection{Differentiation with respect to $\lambda$}
We denote by a dot the differentiation with respect to $\lambda.$
Differentiating the defining equation for $g^\sharp_\nu$, we find that
\[
\dot{g}^\dia_\nu = \left( \Delta-\lambda\right)^{-1} G^{\dia}_\nu.
\]
Thus, we get
\begin{equation}\label{eq:dotS}
\dot{S}^{\sharp \dia}_{\mu \nu}  = \Lambda_\mu^\sharp\left( \dot{g}^\dia_\nu \right)= \dis \int G^\sharp_\mu G^\dia_\nu \, dS.
\end{equation}
From this relation we deduce the following proposition.

\begin{prop}
For any $\lambda$, $S^{aa}(\lambda)$ and $S^{hh}(\lambda)$ are symmetric matrices and
\begin{equation}\label{eq:transpose}
{}^tS^{ah}(\lambda)= S^{ha}(\lambda).
\end{equation}
\end{prop}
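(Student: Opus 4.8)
The plan is to read off symmetry and the transpose relation directly from the integral formula $\dot S^{\sharp\dia}_{\mu\nu}=\int_X G^\sharp_\mu G^\dia_\nu\,dS$ in~\eqref{eq:dotS}, together with the behaviour at $\Re\lambda\to-\infty$ already established in the Corollary above. First I would observe that the integrand $G^\sharp_\mu(\cdot;\lambda)\,G^\dia_\nu(\cdot;\lambda)$ is manifestly symmetric under the simultaneous interchange of the index pair $(\sharp,\mu)\leftrightarrow(\dia,\nu)$, because pointwise multiplication of the scalar functions $G^\sharp_\mu$ and $G^\dia_\nu$ commutes. Hence $\dot S^{\sharp\dia}_{\mu\nu}(\lambda)=\dot S^{\dia\sharp}_{\nu\mu}(\lambda)$ for every $\lambda$ in the resolvent set. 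Specialising $\sharp=\dia=a$ gives $\dot S^{aa}_{\mu\nu}=\dot S^{aa}_{\nu\mu}$, so the matrix $\dot S^{aa}(\lambda)$ is symmetric; likewise $\dot S^{hh}(\lambda)$ is symmetric; and taking $\sharp=a$, $\dia=h$ gives $\dot S^{ah}_{\mu\nu}=\dot S^{ha}_{\nu\mu}$, i.e.\ ${}^t\dot S^{ah}(\lambda)=\dot S^{ha}(\lambda)$.

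Next I would integrate these identities in $\lambda$ to remove the dots. The function $\lambda\mapsto S(\lambda)$ is analytic (as noted in the subsection on analyticity), so it suffices to pin down the constants of integration at one point. The natural point is $\Re\lambda\to-\infty$: by the Corollary, $S^{aa}(\lambda)=O(|\lambda|^{-\infty})$ and $S^{hh}(\lambda)=O(|\lambda|^{-\infty})$, both of which are (trivially) symmetric in the limit, and $S^{ah}(\lambda)=\mathrm{diag}(\kappa_\nu(\lambda))+O(|\lambda|^{-\infty})=S^{ha}(\lambda)+O(|\lambda|^{-\infty})$, so ${}^tS^{ah}-S^{ha}\to0$ there. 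Since the antisymmetric part of $S^{aa}$, the antisymmetric part of $S^{hh}$, and the matrix ${}^tS^{ah}-S^{ha}$ are all analytic in $\lambda$ with vanishing derivative (by the previous paragraph) and vanishing limit as $\Re\lambda\to-\infty$, they vanish identically. This yields precisely the claimed symmetry of $S^{aa}(\lambda)$ and $S^{hh}(\lambda)$ and the relation~\eqref{eq:transpose}.

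Alternatively, and perhaps more cleanly, one can avoid the integration argument entirely by revisiting the formula $S^{\sharp\dia}_{\mu\nu}=-\int_X G^\sharp_\mu(\cdot;\lambda)\,f_\nu^\dia(\cdot;\lambda)\,dS$ established in Section~\ref{SM}, together with the Green-formula identity $\Lambda^\sharp_\mu(u)=\int_X(\Delta-\lambda)u\cdot G^\sharp_\mu\,dS$ from~\eqref{qe3}. Applying the latter with $u=g^\dia_\nu$ and using $(\Delta-\lambda)g^\dia_\nu=-f^\dia_\nu$ gives $S^{\sharp\dia}_{\mu\nu}=-\int_X f^\dia_\nu\,G^\sharp_\mu\,dS$; applying it instead with $u=g^\sharp_\mu$ and the indices swapped gives $S^{\dia\sharp}_{\nu\mu}=-\int_X f^\sharp_\mu\,G^\dia_\nu\,dS$. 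To compare the two, note $G^\sharp_\mu=F^\sharp_\mu+g^\sharp_\mu$ and $f^\dia_\nu=(\Delta_0^*-\lambda)F^\dia_\nu$, so each integral is a sum of a ``$F$ against $f$'' term, which is symmetric because it is an integral over a fixed small disc of $F^\sharp_\mu\,(\Delta_0^*-\lambda)F^\dia_\nu$ and one integrates by parts (the boundary terms at $\gamma_\eps$ contributing the Green pairing $\Green$, which for the holomorphic/antiholomorphic $F$'s is controlled by the bilinear form displayed after~\eqref{qe2}), and a ``$g$ against $f$'' term which equals $-\Lambda^\sharp_\mu(g^\dia_\nu)$ up to the symmetric piece. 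Bookkeeping the $\Green$-contributions, one finds they match precisely when $(\sharp,\dia)$ is $(a,a)$, $(h,h)$, or the pair $(a,h)/(h,a)$ — which is the content of the proposition — whereas they would not match for the ``$0$'' block, consistent with the statement being only about these three blocks.

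The main obstacle is the bookkeeping in this second approach: one must be careful about which Green-pairing boundary terms survive, since $\Green(F^\sharp_\mu,\ov{F^\dia_\nu})$ is generically nonzero and only the specific combinations appearing for the $a/h$ indices (as opposed to the $0$-index, which pairs with itself) cancel out. For this reason I would actually present the first, integration-in-$\lambda$ argument as the proof, since it reduces everything to the transparent symmetry of $G^\sharp_\mu G^\dia_\nu$ under index interchange plus the already-proven large-$|\lambda|$ asymptotics, and mention the alternative only as a remark.
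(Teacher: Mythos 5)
Your first argument is exactly the paper's proof: the symmetry of the integrand in $\dot S^{\sharp\dia}_{\mu\nu}=\int_X G^\sharp_\mu G^\dia_\nu\,dS$ gives that the antisymmetric parts of $S^{aa}$, $S^{hh}$ and the matrix ${}^tS^{ah}-S^{ha}$ have vanishing $\lambda$-derivative, and the $\Re\lambda\to-\infty$ asymptotics from the preceding Corollary fix the constants of integration to zero. The proposal is correct and takes essentially the same approach as the paper (the alternative Green-formula argument you sketch is not needed).
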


\begin{proof}
The expression for $\dot{S}^{\sharp \dia}_{\mu \nu}$ yields that $\dot{S}^{aa}$ and $\dot{S}^{hh}$ are symmetric matrices.
Since both tend to symmetric matrices (actually $0$) as $\lambda$ goes to $-\infty$, the first part of the claim follows.
In the same way we obtain
\begin{equation*}
\begin{split}
\dot{S}^{ah}_{\mu\nu} -\dot{S}^{ha}_{\nu\mu} = 0.
\end{split}
\end{equation*}
Since $S^{ah}$ and ${}^t S^{ha}$ tend to the same diagonal matrix as $\lambda$ goes to $-\infty$, the
second part of the claim also follows.
\end{proof}

Combining the identities (\ref{eq:conj}) and $(\ref{eq:transpose})$ we conclude that $S^{ah}$ is hermitian for $\lambda$ real; actually it is an analytic family of hermitian matrices, meaning that $S^{ah}(\lambda)=\left(S^{ah}(\ovl)\right)^*$.

\subsubsection{Behavior for $\lambda$ going to $0$}
The matrix $S(\lambda)$ is well defined a priori only for $\lambda$ in the resolvent set of $\Delta.$
However it is always possible to define the function $f_\nu^\sharp(\ \cdot\ ;\,\lambda=0).$ Whenever $\sharp\neq 0$
the latter function is in the range of $\Delta.$ We can thus find solutions $g_\nu^\sharp(\cdot;0)$ to the equation
\[
\Delta g_\nu^*= -f_\nu^*(\cdot, 0).
\]
The latter solutions are defined only up to the addition of a constant. It follows that the definition of $S^{\sharp \dia}_{\mu\nu}(0)$ makes sense for $\sharp\neq 0$, $\dia \neq 0$
 and  the following Proposition holds.

\begin{proposition}
For $\sharp\neq 0$ and $\dia \neq 0$ the matrix-valued function $\lambda \mapsto S^{\sharp \dia}(\lambda)$ extends holomorphically to a neighbourhood of $0.$  Moreover $S^{\sharp \dia}(0)$ depends only on the conformal class of $\tbfm.$
\end{proposition}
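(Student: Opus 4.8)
The plan is to prove the two assertions separately: holomorphic extendability across $\lambda=0$, and conformal invariance of the value $S^{\sharp\dia}(0)$ when $\sharp,\dia\neq 0$.

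\medskip

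\textbf{Step 1: Holomorphic extension.} First I would observe that the only obstruction to defining $S^{\sharp\dia}(\lambda)=\Lambda^\sharp_\mu(g^\dia_\nu(\cdot;\lambda))$ at $\lambda=0$ is the singularity of $(\Delta-\lambda)^{-1}$ at the bottom of the spectrum: $0$ is a simple eigenvalue of $\Delta$ with constant eigenfunction $\mathbf{1}$ (up to normalization $\mathbf{1}/\sqrt{\mathrm{Area}}$). Write $\Pi_0$ for the orthogonal projection onto $\ker\Delta$ and $\Pi_0^\perp=\Id-\Pi_0$. For $\sharp\neq 0$ the function $F^\sharp_\nu$ is, near $P$, of the form $c_\nu z^{-\nu}\rho$ or $c_\nu\bar z^{-\nu}\rho$ with $0<\nu<1$, hence $f^\sharp_\nu(\cdot;0)=\Delta_0^* F^\sharp_\nu\in L^2(X)$; moreover $f^\sharp_\nu(\cdot;0)$ is orthogonal to the constants, because $\int_X \Delta_0^* F^\sharp_\nu\,dS=\lim_{\eps\to 0}\oint_{\gamma_\eps}\partial_r F^\sharp_\nu\,ds=0$ for $\nu>0$ (the boundary term vanishes, in contrast to the $\sharp=0$ case where it produces the constant $1$). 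Therefore one can take $g^\sharp_\nu(\cdot;0)=-\Delta^{-1}\Pi_0^\perp f^\sharp_\nu(\cdot;0)$, the reduced resolvent applied to $f^\sharp_\nu(\cdot;0)$, which is the canonical choice in $(\ker\Delta)^\perp$. To get analyticity in a full neighbourhood of $0$, write, for $\lambda$ small and nonzero,
\[
g^\dia_\nu(\cdot;\lambda)=-(\Delta-\lambda)^{-1}f^\dia_\nu(\cdot;\lambda)
=-\frac{(\Pi_0 f^\dia_\nu(\cdot;\lambda),\mathbf 1)}{-\lambda}\,\mathbf 1-(\Delta-\lambda)^{-1}\Pi_0^\perp f^\dia_\nu(\cdot;\lambda).
\]
The second term is holomorphic near $0$ by standard analytic perturbation theory since $\Delta$ restricted to $(\ker\Delta)^\perp$ has a spectral gap and $\lambda\mapsto f^\dia_\nu(\cdot;\lambda)$ is analytic with $f^\dia_\nu(\cdot;0)\perp\mathbf 1$, so $\Pi_0 f^\dia_\nu(\cdot;\lambda)=O(\lambda)$ and the first (singular-looking) term is in fact holomorphic as well, being $\frac{1}{\lambda}O(\lambda)\mathbf 1$. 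Hence $g^\dia_\nu(\cdot;\lambda)$ extends holomorphically in $L^2$, and applying the $\lambda$-independent continuous functional $\Lambda^\sharp_\mu$ (continuous on $H^2(X)$; one checks $g^\dia_\nu(\cdot;\lambda)$ stays in $H^2(X)$ uniformly) gives that $S^{\sharp\dia}(\lambda)$ is holomorphic near $0$. The ambiguity in $g^\sharp_\nu(\cdot;0)$ modulo constants does not affect $S^{\sharp\dia}(0)$ for $\sharp\neq 0$, since $\Lambda^\sharp_\mu(\mathbf 1)=0$ for $\sharp\neq 0$ (constants have no singular $z^{-\mu}$ or $\bar z^{-\mu}$ part).

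\medskip

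\textbf{Step 2: Conformal invariance.} Here I would use the alternative expression $S^{\sharp\dia}_{\mu\nu}(0)=-\int_X G^\sharp_\mu(\cdot;0)\,f^\dia_\nu(\cdot;0)\,dS$, or better, integrate by parts to rewrite everything in terms of the $G$-functions, which are honest (cut-off independent) solutions of $\Delta_0^* G^\sharp_\nu=0$ with prescribed singular behaviour $c_\nu z^{-\nu}+\text{(regular)}$ near $P$. The key point is that $\Delta_0^*=-4\,\alpha^{-1}\partial_z\bar\alpha^{-1}\partial_{\bar z}$ acting between the appropriate density bundles is, up to a positive conformal factor, the operator $-4\partial_z\partial_{\bar z}$ in any conformal local coordinate; so the equation $\Delta_0^* u=0$ away from $P$ is the same as "$u$ harmonic". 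Concretely, $G^h_\nu(\cdot;0)$ is characterized as the unique function on $X\setminus\{P\}$, harmonic off $P$, bounded at all other conical points in accordance with the chosen (Friedrichs at $P$) extensions, with the expansion $G^h_\nu=c_\nu z^{-\nu}+\sum_{\mu}S^{hh}_{\mu\nu}(0)\,c_\mu z^{\mu}+\sum_\mu S^{ah}_{\mu\nu}(0)\,c_\mu\bar z^{\mu}+(\text{decaying})$ near $P$ in the distinguished parameter — and this characterization only refers to the complex structure of $X$, the point $P$, the angle $2\ell\pi$, and the distinguished local parameter $y$ (which is determined by $\alpha$, itself conformally natural since $dz$ is). Since $G^\sharp_\nu(\cdot;0)$ is determined by conformal data, and the pairing $\Lambda^\sharp_\mu(G^\dia_\nu(\cdot;0))$ extracts a coefficient in its expansion, $S^{\sharp\dia}(0)$ for $\sharp,\dia\neq 0$ depends only on the conformal class of $\tbfm$. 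A clean way to phrase this: replace $\tbfm$ by $e^{2\phi}\tbfm$ with $\phi$ smooth; the Laplacians are related by $\Delta^{e^{2\phi}\tbfm}=e^{-2\phi}\Delta^{\tbfm}$, so $\ker(\Delta_0^*-0)$ is unchanged, and the Friedrichs extension at $P$ is preserved because the asymptotic classes of functions near $P$ are conformally invariant (they are read off in the local holomorphic coordinate, independent of $\phi$); the functionals $\Lambda^\sharp_\mu$ are likewise unchanged.

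\medskip

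\textbf{Main obstacle.} I expect the delicate point to be Step 1's claim that $g^\dia_\nu(\cdot;\lambda)$ extends holomorphically as an $H^2(X)$-valued (not just $L^2$-valued) function across $\lambda=0$, since $H^2(X)=\dom\Delta$ carries the graph norm and one must control $\Delta g^\dia_\nu(\cdot;\lambda)=-\lambda g^\dia_\nu(\cdot;\lambda)-f^\dia_\nu(\cdot;\lambda)$ uniformly — this is where one genuinely uses that $f^\dia_\nu(\cdot;0)\perp\ker\Delta$ for $\dia\neq 0$, i.e. that the would-be pole cancels. A secondary subtlety is making precise, in Step 2, that the distinguished parameter $y$ and hence the normalization constants $c_\nu$ are themselves conformally natural, so that "$S^{\sharp\dia}(0)$ depends only on the conformal class" is stated with the correct bookkeeping of which auxiliary choices (the local holomorphic parameter near $P$) are being held fixed.
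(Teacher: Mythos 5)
Your proposal is correct and follows essentially the same route as the paper: the holomorphic extension comes from splitting off the rank-one singular part of the resolvent along the constants and observing that $\int_X f^\dia_\nu(\cdot;\lambda)\,dS$ is holomorphic and vanishes at $\lambda=0$ (so the apparent $1/\lambda$ pole cancels), and the conformal invariance comes from characterizing $G^\dia_\nu(\cdot;0)$ by the conformally invariant conditions $\Delta_0^*G=0$ plus prescribed singular behaviour at $P$, the residual ambiguity being an additive constant that is killed by $\Lambda^\sharp_\mu$ for $\sharp\neq 0$. Your write-up is in fact somewhat more detailed than the paper's (e.g.\ the explicit verification that $f^\sharp_\nu(\cdot;0)\perp\mathbf 1$ via the boundary integral, and the remark on the $H^2$ versus $L^2$ topology), but the underlying argument is the same.
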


\begin{proof}
For $\lambda$ close to $0$ we have
\[
g_\nu^\sharp \,=\, \frac{1}{\lambda}\int_X f_{\nu}(\cdot \,,\,\lambda)\,dS\,+\, g_{\nu}^{\sharp,\perp}(\cdot\,,\,\lambda),
\]
where $\int_X g_{\nu}^{\sharp,\perp} =0.$
Since $\lambda \mapsto \int_X f_\nu^\sharp(\lambda ; 0)  =0$ is
holomorphic and vanish at $0$, we obtain that $\lambda \mapsto G_\nu^*$ can be holomorphically continued to a neighbourhood of $0.$
The first statement follows. The second statement follows by remarking that $G_\nu^\sharp$ is
a function in $\dom(\Delta_0^*)$ such that $\Delta_0^* G_\nu^\sharp =0$ and the singular behaviour near $P$ is
prescribed. Both conditions are conformally invariant so that if we change the metric in its conformal class, we may only change $G$ by adding a constant. This will not affect the coefficients in the $S$-matrix we are considering here.
\end{proof}

\subsection{$S(0)$ and the Schiffer projective connection}

Chose a marking for the Riemann surface $X$, i. e. the canonical basis $a_1, b_1, \dots, a_g, b_g$ of $H_1(X, {\mathbf Z})$. Let $\{v_1, \dots,
v_g\}$ be the basis of holomorphic differentials on $X$ normalized via
$$\int_{a_i}v_j=\delta_{ij}\,.$$
Then the matrix of $b$-periods of the marked Riemann surface $X$ is defined via
$${\mathbf B}=||\int_{b_i}v_j||\,.$$

Let $W(\,\cdot\,,\,\cdot\,)$ be the canonical meromorphic bidifferential on $X\times X$, with properties $W(P,Q)=W(Q, P)$,
$$\int_{a_i}W(\,\cdot\,, P)=0,$$
and
$$\int_{b_j}W(\,\cdot\,, P)=2\pi iv_j(P).$$
The bidifferential $W$ has the only double pole along the diagonal $P=Q$. In any holomorphic local parameter $x(P)$ one has the asymptotics
\begin{equation}\label{funH}W(x(P), x(Q))=\left(\frac{1}{(x(P)-x(Q))^2}+H(x(P), x(Q))\right)dx(P)dx(Q),\end{equation}
$$H(x(P), x(Q))=\frac{1}{6}S(x(P))+O(x(P)-x(Q)),$$
as $Q\to P$, where $S_B(\cdot)$ is the Bergman projective connection.


Consider the Schiffer bidifferential
$${\cal S}(P, Q)=W(P, Q)-\pi\sum_{i, j}(\Im {\mathbf B})^{-1}_{ij}v_i(P)v_j(Q).$$
The Schiffer projective connection, $S_{Sch}$, is defined via the asymptotic expansion
$${\cal S}(x(P), x(Q))=\left(\frac{1}{(x(P)-x(Q))^2}+\frac{1}{6}S_{Sch}(x(P))+O(x(P)-x(Q))\right)dx(P)dx(Q)\,$$
One has the equality
\begin{equation}\label{connection}
S_{Sch}(x)=S_B(x)-6\pi \sum_{i, j}(\Im {\mathbf B})^{-1}_{ij}v_i(x)v_j(x)\,.\end{equation}

In contrast to the canonical meromorphic differential and the Bergman projective connection, the Schiffer bidifferential and the Schiffer projective connection are independent of the marking of the Riemann surface $X$.

Introduce also the so-called Bergman kernel (which is in fact the Bergman reproducing kernel for holomorphic differentials on $X$) as
$$B(x, \bar x)=\sum_{ij}(\Im {\mathbf B})^{-1}_{ij}v_i(x)\overline{v_j(x)}\,.$$

\begin{prop}\label{clue}
Let $X$ be a Riemann surface and let $\tbfm$ be a conformal metric on
$X$, suppose that $\tbfm$ has a conical singularity of angle $2\ell\pi$ at $p.$
Let also $x$ be the distinguished local parameter for $\tbfm$ near
$p$. Then there is the following relation between the entries of the
holomorphic-holomorphic part, $S^{hh}(0)$, of the $S$-matrix :
\begin{equation}\label{eq:clue}
\sum_{k=1}^{\ell-1}\frac{\sqrt{k(\ell-k)}}{\ell}S^{hh}_{\frac{\ell}{\ell}\frac{\ell-k}{\ell}}(0)=-\frac{1}{6\ell(\ell-2)!}\left(\frac{d}{dx}\right)^{\ell-2}S_{Sch}(x)\Big|_{x=0}\,.
\end{equation}
\end{prop}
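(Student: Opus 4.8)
The plan is to evaluate each entry $S^{hh}_{\mu\nu}(0)$ explicitly and then carry out the sum in (\ref{eq:clue}). Let $x$ be the distinguished parameter at $P$, so that $z=x^{\ell}$ and, near $P$, $F^h_\nu=c_\nu z^{-\nu}\rho=c_\nu x^{-q}\rho$ with $q:=\ell\nu\in\{1,\dots,\ell-1\}$. First I would reduce the statement to Taylor coefficients: since $S^{hh}_{\mu\nu}(0)=\Lambda^h_\mu\bigl(G^h_\nu(\cdot;0)\bigr)$ and, by the expansion (\ref{qe1}), $\Lambda^h_\mu(u)$ is $c_\mu^{-1}$ times the coefficient of $z^{\mu}=x^{\ell\mu}$ in the holomorphic part of $u$ near $P$, writing that holomorphic part as $c_\nu x^{-q}+\sum_{p\ge 0}b_p\,x^{p}$ gives $S^{hh}_{\frac p\ell\,\frac q\ell}(0)=b_p/c_{p/\ell}$, and the whole proposition reduces to a formula for the $b_p$.

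Next I would pin down $G^h_\nu(\cdot;0)$ as a global object. By the preceding proposition (holomorphy of $S^{\sharp\dia}$ at $\lambda=0$, $\sharp,\dia\neq 0$), the function $G^h_\nu(\cdot;0)$ is well defined up to an additive constant, belongs to $\dom(\Delta_0^*)$ and satisfies $\Delta_0^*G^h_\nu(\cdot;0)=0$ on $X\setminus\{P\}$. Using (\ref{qe1}) together with the descriptions of $H^2(X)$ and $H^2_0(X)$, one checks that near $P$ its only singular term is the holomorphic pole $c_\nu x^{-q}$ --- no logarithm, no lower-order pole, no antiholomorphic singular part. Hence $\omega_\nu:=\partial G^h_\nu(\cdot;0)$ is a meromorphic differential of the second kind on $X$, holomorphic off $P$, with principal part $-q\,c_\nu x^{-q-1}dx$ at $P$ and (automatically) zero residue, while $\bar\partial G^h_\nu(\cdot;0)$ is bounded near $P$ and so extends to a global antiholomorphic differential $\overline{\eta_\nu}$ with $\eta_\nu$ holomorphic. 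Thus $dG^h_\nu(\cdot;0)=\omega_\nu+\overline{\eta_\nu}$, and $p\,b_p$ is the coefficient of $x^{p-1}dx$ in the regular part of $\omega_\nu$ at $P$.

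The heart of the matter is then to compute that regular part, which is where the Schiffer connection enters. Let $\Omega_\nu$ be the unique second-kind differential with principal part $-q\,c_\nu x^{-q-1}dx$ at $P$ and vanishing $a$-periods; differentiating the normalization $\oint_{a_i}W(\cdot,Q)=0$ in $x(Q)$ shows
\begin{equation*}
\Omega_\nu=-\frac{c_\nu}{(q-1)!}\,\partial_{x(Q)}^{\,q-1}\!\left(\frac{W(\cdot,Q)}{dx(Q)}\right)\Big|_{x(Q)=0},
\end{equation*}
so by (\ref{funH}) the coefficient of $x^{p-1}dx$ in the regular part of $\Omega_\nu$ is $-c_\nu H_{p-1,q-1}$, where $H(x,w)=\sum_{m,n}H_{mn}x^m w^n$ is the regular part in (\ref{funH}). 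Writing $\omega_\nu=\Omega_\nu+\sum_i\mu^{(\nu)}_i v_i$ with $\mu^{(\nu)}_i=\oint_{a_i}\omega_\nu$ and $\eta_\nu=\sum_j e^{(\nu)}_j v_j$, single-valuedness of $G^h_\nu(\cdot;0)$ forces $\oint_{a_i}dG^h_\nu=\oint_{b_j}dG^h_\nu=0$; the $a$-period relations give $\mu^{(\nu)}_i=-\overline{e^{(\nu)}_i}$, and the $b$-period relations, upon using $\mathbf B-\overline{\mathbf B}=2i\,\Im\mathbf B$ and $\oint_{b_j}W(\cdot,Q)=2\pi i\,v_j(Q)$, yield
\begin{equation*}
\mu^{(\nu)}_i=\pi c_\nu\sum_j(\Im\mathbf B)^{-1}_{ij}\,v_{j,q-1},
\end{equation*}
where $v_{j,m}$ is the $m$-th Taylor coefficient of $v_j$ in $x$ (i.e. $v_j=(\sum_m v_{j,m}x^m)\,dx$ near $P$). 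Hence the coefficient of $x^{p-1}dx$ in the regular part of $\omega_\nu$ equals $-c_\nu\bigl(H_{p-1,q-1}-\pi\sum_{ij}(\Im\mathbf B)^{-1}_{ij}v_{i,p-1}v_{j,q-1}\bigr)=-c_\nu H^{Sch}_{p-1,q-1}$, where $H^{Sch}_{mn}$ are the coefficients in $\mathcal S(x,w)=\bigl((x-w)^{-2}+\sum_{m,n}H^{Sch}_{mn}x^m w^n\bigr)dx\,dw$ for the Schiffer bidifferential $\mathcal S$. So $p\,b_p=-c_\nu H^{Sch}_{p-1,q-1}$, and since $c_{q/\ell}/c_{p/\ell}=\sqrt{p/q}$ by (\ref{fixingc}),
\begin{equation*}
S^{hh}_{\frac p\ell\,\frac q\ell}(0)=-\frac{1}{\sqrt{pq}}\,H^{Sch}_{p-1,q-1}.
\end{equation*}

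Finally, setting $p=k$, $q=\ell-k$ and summing,
\begin{equation*}
\sum_{k=1}^{\ell-1}\frac{\sqrt{k(\ell-k)}}{\ell}\,S^{hh}_{\frac k\ell\,\frac{\ell-k}\ell}(0)=-\frac1\ell\sum_{k=1}^{\ell-1}H^{Sch}_{k-1,\ell-k-1}=-\frac1\ell\sum_{m+n=\ell-2}H^{Sch}_{mn},
\end{equation*}
and since $\mathcal S$ is symmetric, $\sum_{m,n}H^{Sch}_{mn}x^{m+n}=H^{Sch}(x,x)=\frac{1}{6}S_{Sch}(x)$, so $\sum_{m+n=N}H^{Sch}_{mn}=\frac{1}{6\,N!}\left(\frac{d}{dx}\right)^{N}S_{Sch}(x)\big|_{x=0}$; taking $N=\ell-2$ gives exactly (\ref{eq:clue}). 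I expect the main obstacle to be the second and third steps: rigorously establishing the local structure of $G^h_\nu(\cdot;0)$ at $P$ (so that $dG^h_\nu$ genuinely splits as a second-kind differential plus a global antiholomorphic differential with no hidden logarithmic or subleading singular terms), and then tracking the complex conjugations and normalizing constants through the period computation carefully enough that the $a$-period correction comes out to be precisely $\pi(\Im\mathbf B)^{-1}$, i.e. the Schiffer correction to $W$.
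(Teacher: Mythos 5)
Your proposal is correct and follows essentially the same route as the paper: both reduce $S^{hh}_{\frac{p}{\ell}\frac{q}{\ell}}(0)$ to the Taylor coefficients of a harmonic function with singularity $c_\nu x^{-q}$, identify the relevant differential as the $(q-1)$-st derivative of $W(\cdot,Q)$ corrected by a holomorphic/antiholomorphic combination whose periods are fixed by single-valuedness, and find that the correction is exactly the $\pi(\Im\mathbb B)^{-1}$ term turning $W$ into the Schiffer bidifferential, giving $S^{hh}_{\frac{p}{\ell}\frac{q}{\ell}}(0)=-(pq)^{-1/2}H^{Sch}_{p-1,q-1}$ and then \eqref{eq:clue} by restriction to the diagonal. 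The only difference is the direction of the argument: the paper constructs the explicit differentials $\Omega_k,\Sigma_k$ with purely imaginary periods and builds the harmonic function $f_k$ from them, whereas you start from the abstractly defined $G^h_\nu(\cdot\,;0)$ and derive the decomposition $dG^h_\nu=\omega_\nu+\overline{\eta_\nu}$ — the computations and the resulting formulas coincide.
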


\begin{remark}
The same would hold true for a conical singularity of angle $\beta$ with $2\pi (\ell-1)<\beta\leq 2\pi \ell$.
\end{remark}

\begin{remark}\label{rk:A}
Observe that using the indices $\mu,\nu =\frac{k}{\ell}$ the left-hand-side of  \eqref{eq:clue} can be written  as
\[
\sum_{\mu+\nu=1} \sqrt{\mu}\sqrt{\nu}S^{hh}_{\mu\nu}(0).
\]
\end{remark}

\begin{proof} Introduce the following one forms $\Omega_k$ and $\Sigma_k$ on $X$:
$$\Omega_k=-\frac{1}{(k-1)!}\left(\frac{d}{dx}\right)^{k-1}\frac{W(\,\cdot\,, x)}{dx}\Big|_{x=0}+\frac{2\pi i}{(k-1)!}\sum_{\alpha, \beta}
\left(\Im {\mathbf B} \right)^{-1}_{\alpha \beta}\left\{\Im v^{(k-1)}_\beta(0)\right\}v_\alpha(\cdot)$$

$$\Sigma_k=-i\frac{1}{(k-1)!}\left(\frac{d}{dx}\right)^{k-1}\frac{W(\,\cdot\,, x)}{dx}\Big|_{x=0}+\frac{2\pi i}{(k-1)!}\sum_{\alpha, \beta}
\left(\Im {\mathbf B} \right)^{-1}_{\alpha \beta}\left\{\Re v^{(k-1)}_\beta(0)\right\}v_\alpha(\cdot)\,,$$
where
$$v^{(k-1)}_\beta(0):=\left(\frac{d}{dx}\right)^{k-1}\frac{v_\beta(x)}{dx}\Big|_{x=0}\,.$$

All the periods of the differentials $\Omega_k$ and $\Sigma_k$ are pure imaginary, therefore,
one can correctly define the function $f_k$ on $X$ via
$$f_k(Q)=\Re\left\{ \int_{P_0}^Q \Omega_k  \right\}-i\Re\left\{\int_{P_0}^Q\Sigma_k\right\}\,$$
where $P_0$ is an arbitrary base point not coinciding with $P$.
Clearly, $f_k$ is harmonic in $X\setminus\{P\}$ and
\begin{equation}\label{razlozh}f_k(x)=\frac{1}{x^k}+{\rm const}+\sum_{j=1}^{\infty}(c_jx^j+d_j\bar x^j)\end{equation}
in a vicinity of $P$. One gets
$$c_l=-\frac{1}{l!(k-1)!}\partial_x^{l-1}\partial_y^{k-1}H(x, y)\Big|_{x=y=0}
 +\frac{\pi}{l!(k-1)!}\sum_{\alpha, \beta}\left(\Im {\mathbf B} \right)^{-1}_{\alpha \beta}v^{(k-1)}_\beta(0)v^{(l-1)}_\alpha(0)$$
and
$$S^{hh}_{\frac{k}{\ell}\frac{l}{\ell}}(0)=\sqrt{\frac{l}{k}}\ c_l.$$

This implies that
\begin{equation*}
\begin{split}
\sum_{k=1}^{\ell-1}\frac{\sqrt{k(\ell-k)}}{\ell}S^{hh}_{\frac{k}{\ell}\frac{\ell-k}{\ell}}(0)&\,=
-\frac{1}{\ell}\sum_{k=0}^{\ell-2}\frac{1}{k!(\ell-2-k)!}\partial_x^{\ell-2-k}\partial_y^{k}H(x,
y)\Big|_{x=y=0} \\
&
\,+\frac{\pi}{\ell(\ell-2)!}\left(\frac{d}{dx}\right)^{\ell-2}\sum_{\alpha,
  \beta}\left(\Im{\mathbf B}\right)^{-1}_{\alpha \beta}
\frac{v_\alpha(x)v_\beta(x)}{(dx)^2}\Big|_{x=0}\,.
\end{split}
\end{equation*}

Since
$$\frac{1}{6}S_B(x)=H(x, x)=\sum_{n=0}^\infty\frac{1}{n!}(\partial_x+\partial_y)^nH(x, y)\Big|_{x=y}x^n,$$
we have
$$\frac{1}{6}S_B^{(n)}(0)=\sum_{p=0}^n\frac{n!}{p!(n-p)!}\partial_x^p\partial_y^{n-p}H(0, 0),$$
which implies the proposition.
\end{proof}

\begin{remark}From (\ref{razlozh}) with $k=1$ it follows that for conical angles $2\pi<\beta\leq 4\pi$ we have
$$\left(\begin{matrix}
S^{hh}(0)\ \ S^{ha}(0)\\
S^{ah}(0)\ \ S^{aa}(0)
\end{matrix} \right)= \left(\begin{matrix}
-\frac{1}{6}S_{Sch}(0)\ \ B(0, 0)\\
B(0, 0)\ \ -\frac{1}{6}\overline {S_{Sch}(0)}\,
\end{matrix} \right)\,,
$$
where the Schiffer projective connection and the Bergman kernel are calculated in the distinguished local parameter at $P$.

\end{remark}

\section{Variational formulas with respect to moduli}
In this section we derive the variational formulas for $\ln {\rm det}
\Delta^{\tbfm}$. This derivation goes as follows. First,
using Kato-Rellich theory (see \cite{Kato}),
we prove variational formulas for the individual eigenvalues of
the operator $\Delta^{\tbfm}$. Using these formulas and
the contour integral representation of the zeta-function of
$\Delta^{\tbfm}$, we express the variations of the value
$\zeta_{\Delta^{\tbfm}}'(0)$
with respect to the critical value $z_k$ through a combination of the
matrix elements of the $S$-matrix at the conical point $P_k$
(the zero of the meromorphic differential $df$) of the metric
$\tbfm$. The latter combination is the one appearing in Proposition
\ref{clue} and can be expressed through the Schiffer projective connection.

\subsection{Variational formula for eigenvalues of $\Delta^{\tbfm}$}
\begin{remark}
In this section we will use $w$ for the moduli parameter and on the surface we will use
the complex parameter $z$ and $(x,y)$ for the associated local cartesian coordinates
(so that $z=x+iy$). We warn the reader that in the rest of the paper we use $z_i$ as the moduli
parameters and $x$ as a local complex parameter on $X.$
\end{remark}

\subsubsection{Moving conical points}\label{sss}
Let $\tbfm$ be a metric as constructed in section \ref{sec:Closing}. Let $P$ be one of its conical points.
We wish to define a metric $\tbfm_w$ corresponding to the shift of $P$ by  $w\in \C$.
The following makes this construction precise.

Let $\C$ be the complex plane with pointed origin. We set $\tilde{X}_{w}$ to be the $\ell$-fold covering of $\C$
with one ramification point at $w$ so that $\tilde{X}_w$ can be identified with the Euclidean cone
of total angle $2 \ell \pi.$

Fix a cutoff function $\rho$ and define a map $\phi_w$ from $\C$ to itself by
\[
\phi_w(z) = z\,+\,\rho(|z|)w.
\]
For $w$ small enough, this defines a family of smooth selfdiffeomorphisms of $\C$.
The cone $\tilde{X}_0$ can be obtained by gluing together $\ell$ copies of the plane after cutting along a fixed half-line $d$ that emanates
from the origin.  The cone $\tilde{X}_w$ can then be obtained by gluing $\ell$ copies of $\C$ after cutting it along $\phi_w(d).$

The function $\phi_w$ thus defines a family of smooth diffeomorphisms from $\tilde{X}_0$ onto $\tilde{X}_w$.
Let  the metric $g_w$ on $\tilde{X}_0$ be the pull-back of the Euclidean metric on $\tilde{X}_w$ by $\phi_w$.

We write $w=a+ib$ and use the local cartesian coordinates $x+iy=z$ near $P$.
 For the metric $g_w = A(x,y; w) dx^2 + 2B(x,y ; w) dxdy + C(x,y ; w)dy^2$ we obtain the following
expressions :
\begin{equation}\label{ploho}
\left( \begin{array}{cc}
A & B \\
B & C
\end{array}
\right)  = {}^tD\phi_w D\phi_w, \quad D\phi_w  = \left( \begin{array}{cc}
1 + \frac{ax}{r}\rho'_1(r) & \frac{ay}{r}\rho_1'(r) \\
\frac{bx}{r}\rho_1'(r) & 1+\frac{by}{r}\rho_1'(r)
\end{array}
\right).
\end{equation}
It follows by direct verification  that the coefficients of $g_w$ are polynomials in $a$, $b$.
Observe that $g_w$ coincide with $g_0$ outside a ball centered at $P$ so that $g_w$ can be smoothly extended by
any Riemannian metric that coincides with the Euclidean one in an annulus centered at $p.$
This allows us to define a metric $\tbfm_w$ on our given setting $X\equiv X_0$ that corresponds to some $X_w$ that
is obtained by fixing the exterior of a small ball centered at $P\in X$ and, in an even smaller ball, by shifting the conical point by
$w$.

We denote by $J_w$ the jacobian determinant of the metric $\tbfm_w$
 on $X$, by $q_w$ the Dirichlet energy quadratic form associated
with $\tbfm_w$, and by $n_w$ the Riemannian $L^2(X)$ scalar product on $(X, \tbfm_w)$.

We thus have the following expressions (for a real $u$ that is supported near $P$)
\begin{equation}
\begin{split}
q_w(u) & = \int_{X_0} \left[ C (\partial_x u)^2 -2B\partial_x u \partial_y v + A (\partial_y u)^2\right ] J_w^{-\und} dxdy,  \\
n_w(u) & = \int u^2  J_w^{\und} dxdy.
\end{split}
\end{equation}
Observe that  $q_w(u)$ and $n_w(u)$ do not depend on $w$ for $u$ supported away of $P$.

In order to apply spectral perturbation theory, we will need the first order variations of $q_w(u)$ and $n_w(u)$.
We prove the relevant lemma below.

\begin{lemma}\label{lem:varq}
For any $\lambda\in\Bbb C$ and any $u\in H^1(X)$ we have
\begin{equation}
\begin{split}
\left[ -\partial_w q+\lambda \partial_w n\right]_{w=0}(u) &= \, 2\int_{X_0} (\partial_z u)^2 \, \frac{z\rho'(r)}{r}  \, dxdy
\,+ \frac{\lambda}{2} \int_{X_0} u^2 \, \frac{\ov{z}\rho'(r)}{r}  \, dxdy,\\
\left[ -\partial_{\ov{w}} q+\lambda \partial_{\ov{w}} n\right]_{w=0}(u) &= \, 2\int_{X_0} (\partial_{\ov{z}} u)^2 \, \frac{\ov{z}\rho'(r)}{r}  \, dxdy
\,+ \frac{\lambda}{2} \int_{X_0} u^2 \, \frac{z\rho'(r)}{r}  \, dxdy.\\
\end{split}
\end{equation}
\end{lemma}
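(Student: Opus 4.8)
The plan is to prove this by a direct first--order perturbation computation. First I would record the two forms in the shape
\[
q_w(u)=\int_{X_0}\bigl(C u_x^2-2B u_x u_y+A u_y^2\bigr)J_w^{-1/2}\,dxdy,\qquad n_w(u)=\int_{X_0}u^2\,J_w^{1/2}\,dxdy,
\]
where $u_x=\partial_x u$, $u_y=\partial_y u$ and $J_w=AC-B^2$ is the Jacobian of $\tbfm_w$. Since the coefficients $A,B,C$ are polynomials in $a,b$ (as noted right after \eqref{ploho}) and agree with the Euclidean ones outside the annulus where $\rho'\ne0$, the maps $w\mapsto q_w(u)$ and $w\mapsto n_w(u)$ are smooth, one may differentiate under the integral sign, and $q_w-q_0$, $n_w-n_0$ are supported in that annulus; in particular it suffices to treat $u\in H^1(X)$ there, which disposes of the passage from $u$ compactly supported near $P$ to general $u$. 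Throughout I write $w=a+ib$ and use $\partial_w=\tfrac12(\partial_a-i\partial_b)$, $\partial_{\bar w}=\tfrac12(\partial_a+i\partial_b)$.

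Next I would expand the metric to first order. Writing $D\phi_w=\Id+M_w$ with $M_w$ linear in $(a,b)$ (immediate from $\phi_w(z)=z+\rho(r)w$), one has ${}^tD\phi_w\,D\phi_w=\Id+(M_w+{}^tM_w)+{}^tM_w M_w$ with the last term $O(|w|^2)$; reading off $M_w+{}^tM_w$ yields
\[
A=1+\tfrac{2ax}{r}\rho'+O(|w|^2),\quad C=1+\tfrac{2by}{r}\rho'+O(|w|^2),\quad B=\tfrac{ay+bx}{r}\rho'+O(|w|^2),
\]
and hence $J_w=1+\tfrac{2(ax+by)}{r}\rho'+O(|w|^2)$. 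Thus at $w=0$: $\partial_a(A,B,C,J_w)=\bigl(\tfrac{2x}{r}\rho',\tfrac{y}{r}\rho',0,\tfrac{2x}{r}\rho'\bigr)$ and $\partial_b(A,B,C,J_w)=\bigl(0,\tfrac{x}{r}\rho',\tfrac{2y}{r}\rho',\tfrac{2y}{r}\rho'\bigr)$.

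Then I would differentiate $q_w$ and $n_w$ at $w=0$, using $A=C=1$, $B=0$, $J_0=1$ there and that differentiating the weights $J_w^{\mp1/2}$ produces $\mp\tfrac12\,\partial J_w$. This gives
\[
\partial_a q|_0(u)=\int\tfrac{\rho'}{r}\bigl(-x u_x^2-2y u_x u_y+x u_y^2\bigr),\qquad \partial_b q|_0(u)=\int\tfrac{\rho'}{r}\bigl(y u_x^2-2x u_x u_y-y u_y^2\bigr),
\]
together with $\partial_a n|_0(u)=\int u^2\tfrac{x}{r}\rho'$, $\partial_b n|_0(u)=\int u^2\tfrac{y}{r}\rho'$. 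Forming $\partial_w=\tfrac12(\partial_a-i\partial_b)$ and using $-x-iy=-z$, $-2y+2ix=2iz$, $x-iy=\bar z$ with $\partial_z u=\tfrac12(u_x-iu_y)$, so that $(\partial_z u)^2=\tfrac14(u_x^2-2iu_x u_y-u_y^2)$, I obtain
\[
-\partial_w q|_0(u)=2\int_{X_0}(\partial_z u)^2\,\tfrac{z\rho'(r)}{r}\,dxdy,\qquad \partial_w n|_0(u)=\tfrac12\int_{X_0}u^2\,\tfrac{\bar z\rho'(r)}{r}\,dxdy,
\]
which is the first asserted identity. The second follows by complex conjugation: for real $u$ the functions $q_w(u)$ and $n_w(u)$ are real in $(a,b)$, so $\partial_{\bar w}$ acts on them as the complex conjugate of $\partial_w$, while $\overline{(\partial_z u)^2}=(\partial_{\bar z}u)^2$ and $z\leftrightarrow\bar z$ (the parameter $\lambda$ being merely carried along, not conjugated). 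I do not expect a genuine obstacle here; the content is elementary bookkeeping, the only care required being (i) checking that the quadratic term ${}^tM_w M_w$ contributes nothing at first order, (ii) keeping straight the algebra converting $(x,y)$-monomials into $z$ and $\bar z$, and (iii) correctly collecting the $\mp\tfrac12\,\partial J_w$ terms from the $J_w^{\mp1/2}$ weights.
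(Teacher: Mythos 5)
Your proposal is correct and follows essentially the same route as the paper: both differentiate $q_w(u)=\int {}^t\nabla u\,\Gbb_w\,\nabla u\;J_w^{-1/2}$ and $n_w(u)=\int u^2 J_w^{1/2}$ under the integral sign at $w=0$, compute the first-order terms of $A,B,C,J_w$ from ${}^tD\phi_w D\phi_w$, and convert the resulting $(x,y)$-quadratic expressions into $(\partial_z u)^2$ and $(\partial_{\bar z}u)^2$ via $\partial_w=\tfrac12(\partial_a-i\partial_b)$. Your intermediate formulas for $\partial_a q$, $\partial_b q$, $\partial_a n$, $\partial_b n$ agree with the paper's matrices $\partial_\bullet\Gbb-\tfrac12\partial_\bullet J\,\Ibb$, and the final identities check out.
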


\begin{proof}
Denote by
\[
\mathbb{G}_w := \left( \begin{array}{cc}
C & -B \\
-B & A
\end{array}
\right)
\]
so that we have
\[
q_w(u) := \int_{X_0} {}^t\nabla u \Gbb \nabla u \cdot J_w^{-\und} dxdy.
\]
Differentiating at $w=0$, we obtain
\begin{equation*}
\pw q_w(u) = \int_{X_0} {}^t \nabla u \cdot \left( \pw \Gbb -\frac{1}{2} \pw J \Ibb\right )\cdot \nabla u  dS.
\end{equation*}
A straightforward computation yields
\begin{equation*}
\begin{split}
\partial_a \Gbb -\frac{1}{2} \partial_a J \Ibb & =
\left( \begin{array}{cc}
-\frac{x\rho'}{r} & -\frac{y\rho'}{r} \\
-\frac{y\rho'}{r} & \frac{x\rho'}{r}
\end{array} \right) \\
\partial_b \Gbb -\frac{1}{2} \partial_b J \Ibb & =
\left( \begin{array}{cc}
\frac{y\rho'}{r} & -\frac{x\rho'}{r} \\
-\frac{x\rho'}{r} & -\frac{y\rho'}{r}
\end{array} \right) \\
\end{split}
\end{equation*}
From this we find
\begin{equation*}
\begin{split}
{}^t \nabla u \cdot \left( \pw \Gbb -\frac{1}{2} \pw J \Ibb\right )\cdot \nabla u & = {}^t \nabla u\cdot %
\left( \begin{array}{cc}
-\frac{z\rho'}{2r} & \frac{iz\rho'}{2r} \\
\frac{iz\rho'}{2r} & \frac{z\rho'}{2r}
\end{array} \right)
\cdot \nabla u \\
&= -\frac{z\rho'}{2r} \left(  (\partial_x u)^2-(\partial_y u)^2 -2i \partial_xu \partial_y u \right) \\
&= -\frac{2z \rho'}{r} (\partial_z u)^2.
\end{split}
\end{equation*}
The other terms proceed in the same way.
\end{proof}

\subsubsection {Variational formulas for eigenvalues of $\tbfm$}
In this section, we compute variational formulas for the eigenvalues of $\tbfm_w.$
In order to do so we use the Kato-Rellich analytic perturbation
theory~\cite{Kato}. It should be noticed that the family of metrics $g_w$ is smooth in $w$ but {\it not analytic}, see \eqref{ploho}. We thus  fix $w$ and introduce
$q_t = q_{tw}$ and $n_t = n_{tw}$, which are analytic in $t$. In this way we may only consider directional derivatives.

The eigenvalue equation that gives the spectrum of $q_t$ relatively to $n_t$ is
\begin{equation}\label{eq:eigeqK}
q_t(u_t,v) \,= \,\lambda_t n_t(u_t,v).
\end{equation}
This problem is analytic in $t$ so that the eigenvalues are organized
into real-analytic branches, see \cite[Chapter VII.6.5]{Kato} for details.

The first-order variation for the eigenbranch $(\lambda_t,u_t)$ is given by the following Feynman-Hellmann formula
\begin{equation}\label{eq:FeynHel}
\frac{d\lambda}{dt} = \frac{dq}{dt}(u) -\lambda\frac{dn}{dt}(u)
\end{equation}
which is obtained by differentiating eq. \eqref{eq:eigeqK} with $v$
fixed and then evaluating at $v=u_t.$

\begin{proposition}
Let $r$ be small enough, then for $\lambda\in \spec(\Delta^{\tbfm})$ we have
\begin{equation}\label{eq:vareig}
\begin{split}
\partial_w \lambda & = \frac{2}{i} \int_{\gamma_r} (\partial_z u)^2 dz - \frac{\lambda}{4} u^2 d \ovz \\
\povw \lambda & = -\frac{2}{i} \int_{\gamma_r} (\povz u)^2 d\ovz -\frac{\lambda}{4} u^2 dz.
\end{split}
\end{equation}

Let $(\lambda_t,u_t)$ be an eigenbranch of $q_t$  relatively to $n_t$, then
$\lambda'= \frac{d}{dt}_{|{t=0}} \lambda$ is given by
\[
\lambda' = w \pw \lambda + \overline{w} \povw \lambda,
\]
where in the expression of $\pw \lambda$ and $\povw \lambda,$ $u=u_0$ is the eigenvector of the eigenbranch $(\lambda_t,u_t)$ at $t=0$ .
\end{proposition}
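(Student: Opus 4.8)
The plan is to feed the Feynman--Hellmann formula \eqref{eq:FeynHel} into Lemma~\ref{lem:varq} and then to rewrite the resulting area integrals over a neighbourhood of $P$ as contour integrals over $\gamma_r$ by Stokes' theorem, using the eigenvalue equation to kill the unwanted bulk terms.

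\emph{Step 1 (Wirtinger reduction and the last assertion).} Since $q_w$ and $n_w$ depend real-analytically on $w\in\C\cong\R^2$ (cf.~\eqref{ploho}), the chain rule gives $\frac{d}{dt}\big|_{t=0}q_{tw}(u)=(w\,\partial_w+\bar w\,\partial_{\bar w})q_w(u)\big|_{w=0}$, where $\partial_w,\partial_{\bar w}$ are the Wirtinger derivatives and one uses $w\,\partial_w+\bar w\,\partial_{\bar w}=\Re w\,\partial_{\Re w}+\Im w\,\partial_{\Im w}$; likewise for $n_w$. Substituting this into \eqref{eq:FeynHel} with $u=u_0$ I would obtain
\[
\lambda'=w\,[\partial_w q-\lambda\,\partial_w n]_{w=0}(u)+\bar w\,[\partial_{\bar w}q-\lambda\,\partial_{\bar w}n]_{w=0}(u),
\]
so that, setting $\partial_w\lambda:=[\partial_w q-\lambda\,\partial_w n]_{w=0}(u)=-[-\partial_w q+\lambda\,\partial_w n]_{w=0}(u)$ and similarly $\povw\lambda$, the last assertion $\lambda'=w\,\partial_w\lambda+\bar w\,\povw\lambda$ is immediate, and Lemma~\ref{lem:varq} turns the bracket into
\[
\partial_w\lambda=-2\int_{X_0}(\partial_z u)^2\,\frac{z\rho'(r)}{r}\,dxdy-\frac\lambda2\int_{X_0}u^2\,\frac{\bar z\rho'(r)}{r}\,dxdy,
\]
together with the analogous expression for $\povw\lambda$ coming from the second line of Lemma~\ref{lem:varq}.

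\emph{Step 2 (bulk to contour).} Because $\rho=\rho(|z|)$ and $\partial_{\bar z}|z|=z/(2|z|)$, one has $\frac{z\rho'(r)}{r}=2\partial_{\bar z}\rho$ and $\frac{\bar z\rho'(r)}{r}=2\partial_z\rho$. I would then introduce the $1$-form $\eta:=\rho\bigl((\partial_z u)^2\,dz-\frac\lambda4 u^2\,d\bar z\bigr)$ on $X_0$ and compute $d\eta$. Expanding $\partial_{\bar z}[\rho(\partial_z u)^2]$ and $\partial_z[\rho u^2]$ produces, besides the desired terms in $\partial_{\bar z}\rho$ and $\partial_z\rho$, a spurious term proportional to $\rho\,u\,\partial_z u\,(\partial_z\partial_{\bar z}u)$ arising from each of $d[\rho(\partial_z u)^2 dz]$ and $d[\rho u^2 d\bar z]$; this is exactly where the hypothesis $\lambda\in\spec(\Delta^{\tbfm})$ enters. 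On $\supp\rho$ the metric is the flat $|dz|^2$, so by elliptic regularity $u$ is smooth on $X_0$ there and satisfies $-4\partial_z\partial_{\bar z}u=\lambda u$, whence the two spurious contributions cancel and, after converting $d\bar z\wedge dz=2i\,dxdy$,
\[
d\eta=2i\Bigl((\partial_{\bar z}\rho)(\partial_z u)^2+\tfrac\lambda4(\partial_z\rho)u^2\Bigr)\,dxdy,
\]
so that the display of Step~1 reads $\partial_w\lambda=-\frac2i\int_{X_0}d\eta$.

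\emph{Step 3 (Stokes), and the main obstacle.} Finally I would apply Stokes' theorem on the annulus $A=\{r\le|z|\le R'\}$, with $r$ small enough that $\rho\equiv1$ on $\{|z|\le r\}$ and $R'$ chosen so that $\supp\rho\subset\{|z|<R'\}$. Since $d\eta$ is supported in $\supp d\rho\subset\operatorname{int}A$ and $\eta\equiv0$ near $|z|=R'$, one gets $\int_{X_0}d\eta=\int_A d\eta=\int_{\partial A}\eta=-\int_{\gamma_r}\eta=-\int_{\gamma_r}\bigl((\partial_z u)^2 dz-\frac\lambda4 u^2 d\bar z\bigr)$, the minus sign coming from $\gamma_r$ being the inner boundary of $A$ (and $\rho\equiv1$ on it). Combined with Step~2 this gives the first identity of \eqref{eq:vareig}, and the formula for $\povw\lambda$ is obtained verbatim starting from the second line of Lemma~\ref{lem:varq} and the $1$-form $\rho\bigl((\partial_{\bar z}u)^2 d\bar z-\frac\lambda4 u^2 dz\bigr)$. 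The delicate points are purely bookkeeping: keeping the factors of $i$ and the $dxdy\leftrightarrow dz\wedge d\bar z$ conversions straight, fixing the orientation of $\gamma_r$, and noting that although $u$ is singular at $P$ this causes no trouble since every relevant integrand is supported away from $P$, so no boundary term at the conical point appears. (That the answer is independent of the small radius $r$ is a built-in check: $\eta$ is closed on the punctured disc $\{0<|z|<R'\}$ where $\rho\equiv1$, by the eigenvalue equation.)
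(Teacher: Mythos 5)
Your proposal is correct and follows essentially the same route as the paper: Feynman--Hellmann plus Lemma~\ref{lem:varq} to get the bulk integrals, then the closed-up one-form $\rho\bigl((\partial_z u)^2\,dz-\frac{\lambda}{4}u^2\,d\bar z\bigr)$ (the paper's $\omega_u$), cancellation of the cross terms via the eigenvalue equation on the flat neighbourhood, and Stokes to convert to the contour integral over $\gamma_r$. The only cosmetic difference is that you work on an annulus while the paper integrates over the exterior of $B(P,r)$, and your explicit treatment of the chain-rule step $\lambda'=w\,\partial_w\lambda+\bar w\,\partial_{\bar w}\lambda$ is a welcome addition that the paper leaves implicit.
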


\begin{remark}
We remind the reader of one subtlety of perturbation theory (see
\cite{Kato}, \cite{KM}). In case of a multiple eigenvalue $\lambda_0$, for any family $q_t$ there are several eigenbranches emanating
from $\lambda_0,$ and the initial corresponding eigenvectors may actually depend of the
chosen family. In particular the expressions $\partial_w \lambda$ and $\partial_{\ov{w}}\lambda$
also depend on the initial $w$ that defines $q_t:=q_{tw}$. In other terms, for any direction $w$ it is possible
to organize the spectrum into eigenvalues branches but it may not be possible
to organize the eigenvalues as functions that are differentiable with respect to $w$ varying in the ball.
\end{remark}

\begin{proof}
We start with the one form
\[
\omega_u\,=\rho(z)\cdot \left((\partial_z u)^2dz-\frac{\lambda}{4} u^2 d\overline{z}\right).
\]
Since $(\lambda,u)$ is an eigenpair of the Laplace operator we compute
\[
d\omega_u = -\left [ \frac{z\rho'}{2r}(\partial_zu)^2 + \frac{\lambda}{4}\cdot \frac{\ovz\rho'}{2r}u^2\right] dz\wedge d\ovz.
\]
We now use Stokes formula to obtain
\begin{equation*}
\begin{split}
\int_{\gamma_r} \omega_u &= -\int_X d\omega_u \\
& = \int_X \left [ \frac{z\rho'}{2r}(\partial_zu)^2 + \frac{\lambda}{4}\cdot \frac{\ovz\rho'}{2r}u^2\right] dz\wedge d\ovz\\
& = \frac{1}{2} \int_X \left[ \frac{z\rho'}{r}(\partial_zu)^2 + \frac{\lambda}{4}\cdot \frac{\ovz\rho'}{r}u^2\right] (-2i dxdy).
\end{split}
\end{equation*}
On the other hand, in (\ref{eq:FeynHel}) we use the formulas provided by Lemma \ref{lem:varq} to obtain~:
\begin{equation}\label{eq:diffEstart}
\begin{split}
\partial_w \lambda&= -\, 2\int_{X_0} (\partial_z u)^2 \, \frac{z\rho'(r)}{r}  \, dxdy
\,- \frac{\lambda}{2} \int_{X_0} u^2 \, \frac{\ov{z}\rho'(r)}{r}  \, dxdy.\\
\end{split}
\end{equation}
Comparing the two yields the first formula in~\eqref{eq:vareig}.
The second one follows either from the same computation or by complex conjugation.
\end{proof}

Since $\omega_u$ is closed in $B(p,r_0)\setminus \{p\}$ we may let tend $r$ to $0$ in the preceding formulas.
We thus obtained a formula for $\partial_w \lambda$ that is expressed only through the asymptotic expansion of $u$ near
$p.$

Recall that by definition of the linear functionals $\Lambda_\nu^\sharp,$ we have in the local coordinate $z$ the following expansion
near $p$
\[
u(z) := c_0 \Lambda^0(u) + \sum_\nu c_\nu \Lambda^h_\nu(u)z^\nu + c_\nu \Lambda^a(u)\ovz^\nu\,+\,u_0.
\]
with $u_0\in H^2_0.$ By letting $r$ go to zero we obtain the following lemma.

\begin{lemma}\label{lem:defA}
Let $A=[a_{\mu\nu}]$ be the matrix defined by
\begin{equation*}
\left \{\begin{array}{ll}
a_{\mu\nu} = 4\pi \mu c_\mu \cdot \nu c_\nu & \mbox{if}~ \mu+\nu=1,\\
a_{\mu\nu} = 0 & \mbox{otherwise}.
\end{array}
\right.
\end{equation*}
We have the alternative expressions
\begin{equation}
\pw \lambda = \sum_{\mu + \nu =1} \Lambda_\nu^h(u) a_{\mu \nu} \Lambda_\mu^h(u),\quad
\povw \lambda =   \sum_{\mu + \nu =1} \Lambda_\nu^a(u) a_{\mu \nu} \Lambda_\mu^a(u).
\end{equation}
\end{lemma}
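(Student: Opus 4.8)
The plan is to start from the formula for $\partial_w\lambda$ obtained in the preceding Proposition, namely
\[
\partial_w\lambda=\frac{2}{i}\int_{\gamma_r}(\partial_zu)^2\,dz-\frac{\lambda}{4}u^2\,d\ovz,
\]
and to evaluate the limit of the right-hand side as $r\to0$ by substituting the asymptotic expansion of $u$ near $P$. Since $u$ is in the domain of the Friedrichs extension, only nonnegative powers of $z$ and $\ovz$ appear; in the local parameter $z$ we have
\[
u(z)=c_0\Lambda^0(u)+\sum_\nu c_\nu\Lambda^h_\nu(u)z^\nu+\sum_\nu c_\nu\Lambda^a_\nu(u)\ovz^\nu+u_0,
\]
with $u_0\in H^2_0$. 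First I would note that $\omega_u$ is closed on the punctured ball, so the contour integral over $\gamma_r$ is independent of $r$ and we may pass to the limit $r\to0$; the $H^2_0$-remainder $u_0$ and the $\lambda$-term involving $u^2\,d\ovz$ contribute terms that vanish in this limit because they carry strictly positive powers of $r$ after the angular integration is performed.

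The key computation is then the residue-type extraction: writing $\partial_zu=\sum_\nu\nu c_\nu\Lambda^h_\nu(u)z^{\nu-1}+\cdots$ (the antiholomorphic part and $u_0$ contributing nothing in the limit), we have $(\partial_zu)^2\,dz$, and integrating over $\gamma_r$ picks out exactly the coefficient of $z^{-1}\,dz$, i.e. the terms with $\nu_1-1+\nu_2-1=-1$, that is $\mu+\nu=1$ with $\mu,\nu\in\{k/\ell\}$. Using $\tfrac{2}{i}\int_{\gamma_r}z^{-1}\,dz=4\pi$ and collecting the products $\mu c_\mu\cdot\nu c_\nu\Lambda^h_\mu(u)\Lambda^h_\nu(u)$ gives
\[
\partial_w\lambda=\sum_{\mu+\nu=1}\Lambda^h_\nu(u)\,(4\pi\mu c_\mu\,\nu c_\nu)\,\Lambda^h_\mu(u),
\]
which is precisely the asserted formula with $a_{\mu\nu}=4\pi\mu c_\mu\cdot\nu c_\nu$ when $\mu+\nu=1$ and $0$ otherwise. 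The formula for $\partial_{\ov w}\lambda$ follows by complex conjugation, exactly as in the proof of the preceding Proposition, or equivalently by the same computation with $z\leftrightarrow\ovz$ and $h\leftrightarrow a$.

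The main obstacle—though a mild one—is justifying the interchange of the limit $r\to0$ with the integral and confirming that the cross-terms between the singular part and $u_0$, and the terms of the form $z^{\nu-1}\ovz^{\mu}$ with $\nu-1+\bar{(\text{power})}$ not matching $-1$, genuinely integrate to zero over $\gamma_r$ for every $r$ (by orthogonality of $e^{ik\theta}$ on the circle) or vanish as $r\to0$ (for the $u_0$ cross-terms, using that $u_0\in H^2_0$ so its contribution to $\partial_zu$ near $P$ is $o(r^{\nu-1})$ in the appropriate averaged sense). Once these routine points are dispatched, the identification of the matrix $A$ is immediate from the definition of the constants $c_0,c_\nu$ in~\eqref{fixingc}.
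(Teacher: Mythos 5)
Your approach is the same as the paper's: drop the $u^2\,d\ovz$ term (since $u$ is bounded), show the $u_0$-part contributes nothing in the limit $r\to 0$, and read off the residue from the singular expansion of $\partial_z(u-u_0)$. Two points deserve attention. First, the one genuinely non-routine step is the vanishing of the cross-terms $\int_{\gamma_r}\partial_z u\,\partial_z u_0\,dz$; your appeal to ``$o(r^{\nu-1})$ in the appropriate averaged sense'' is not a proof, since an $H^2_0$ function has no obvious pointwise decay of its derivative. The paper handles this cleanly by Stokes: for $u_0$ smooth and supported away from $P$ one has $\int_{\gamma_r}\partial_z v\,\partial_z u_0\,dz=\tfrac14\int_{B_r}\bigl(\Delta v\,\partial_z u_0+\partial_z v\,\Delta u_0\bigr)\,dz\wedge d\ovz$, which extends to all $u_0\in H^2_0$ by continuity and tends to $0$ as $r\to0$ because the integrand is integrable; you should use this argument. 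Second, a normalization issue: $\gamma_r$ is a circle on the cone of total angle $2\pi\ell$, so $\int_{\gamma_r}z^{-1}\,dz=2\pi i\ell$, not $2\pi i$ --- this is exactly the convention built into the paper's choice of $c_\nu$ (the identities $\tfrac{2\nu c_\nu^2}{i}\int_{\gamma_\eps}\tfrac{dz}{z}=1$ with $c_\nu=\tfrac{1}{2\sqrt{\nu\ell\pi}}$ force it). The direct computation therefore produces $a_{\mu\nu}=4\pi\ell\,\mu c_\mu\cdot\nu c_\nu=\sqrt{\mu\nu}$, which is what the subsequent identity $\Tr\bigl(AS^{hh}(0)\bigr)=\sum_{\mu+\nu=1}\sqrt{\mu}\sqrt{\nu}\,S^{hh}_{\mu\nu}(0)$ requires; your value $\tfrac{2}{i}\int_{\gamma_r}z^{-1}dz=4\pi$ reproduces the constant as literally printed in the lemma but is inconsistent with the cone geometry and with how $A$ is used afterwards.
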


\begin{proof}
We prove the formula for $\partial_w \lambda,$ the proof is the same for $\partial_{\ovw}\lambda.$
First observe that since $u$ is bounded we have
\[
\lim_{r\rightarrow 0} \int_{\gamma_r} u^2 d\ovz =0.
\]
Now, if $u_0$ is smooth and compactly supported away of $p,$ using
Stokes' formula, we have
that for any $v\in H^2$
\[
\int_{\gamma_r} \partial_z v \partial_z u_0 dz = \frac{1}{4} \int_{B_r} \Delta v \partial_z u_0 \,+\,\partial_z v \Delta u_0 \, dz\wedge d\ovz.
\]
By continuity, this equality persists for $u_0\in H^2_0.$ It follows that for any $u\in H^2$ and any $u_0\in H^2_0$ we have
\[
\lim_{r\rightarrow 0} \int_{\gamma_r} \partial_z u \partial_z u_0 dz =0.
\]
It follows that
\[
\partial_w \lambda = \lim_{r\rightarrow 0} \int_{\gamma_r} \left( \partial_z(u-u_0)\right)^2\,dz.
\]
By definition we have
\[
u-u_0 \,=\,c_0 \Lambda^0(u) + \sum_\nu c_\nu \Lambda^h_\nu(u)z^\nu + c_\nu \Lambda^a(u)\ovz^\nu,
\]
so that the claim follows by a direct computation.
\end{proof}

Using the definition of $\Lambda_\nu^h$ and the fact that $u$ is an eigenfunction, we obtain

\begin{corollary}\label{coro:acv}
For any  $\lambda \in \C \setminus [0,\infty)$ the series
$\sum_{\lambda_n\in \spec(\Delta^{\tbfm})} \partial_w \lambda_n (\lambda_n-\lambda)^{-2}$ is absolutely convergent and
\[
\sum_{\lambda_n\in \spec(\Delta^{\tbfm})}  \frac{\partial \lambda_n}{(\lambda_n-\lambda)^2} \,=\, \Tr \left ( A\frac{\partial S^{hh}}{\partial \lambda} (\lambda)\right).
\]
 \end{corollary}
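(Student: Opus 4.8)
The plan is to combine the eigenvalue variational formula from Lemma~\ref{lem:defA} with the spectral resolution of the resolvent and the differentiation formula \eqref{eq:dotS} for the $S$-matrix. First I would establish absolute convergence. For $\lambda\in\C\setminus[0,\infty)$ fixed, the eigenfunctions $u_n$ of $\Delta^{\tbfm}$ associated to eigenvalues $\lambda_n$ can be taken $L^2$-normalized and real; by Lemma~\ref{lem:defA} we have $\partial_w\lambda_n = \sum_{\mu+\nu=1}\Lambda^h_\nu(u_n)a_{\mu\nu}\Lambda^h_\mu(u_n)$, so $|\partial_w\lambda_n|$ is controlled by $\sum_\nu|\Lambda^h_\nu(u_n)|^2$ up to a constant depending only on the finitely many coefficients $a_{\mu\nu}$. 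Since the linear functionals $\Lambda^\sharp_\nu$ are continuous on $H^2(X)=\dom\Delta^{\tbfm}$, one has $|\Lambda^h_\nu(u_n)|\le C\|u_n\|_{H^2}\le C(1+\lambda_n)$, which by itself is too crude; the correct bound comes from noting that $u_n=(\Delta^{\tbfm}-\lambda)^{-1}(\lambda_n-\lambda)u_n$ is not helpful directly either, so instead I would bound $\sum_n |\Lambda^h_\nu(u_n)|^2(\lambda_n-\lambda)^{-2}$ by recognizing it as $\big(A(\Delta^{\tbfm}-\lambda)^{-2}A^*\big)$-type diagonal sum and using that $\Lambda^h_\nu$ applied to the resolvent is, after regularization, trace class — this is precisely what the next point makes rigorous.

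The key step is to identify the sum with $\Tr\big(A\,\partial_\lambda S^{hh}(\lambda)\big)$. By \eqref{eq:dotS} we have $\dot S^{\sharp\dia}_{\mu\nu}(\lambda)=\int_X G^\sharp_\mu(\cdot;\lambda)G^\dia_\nu(\cdot;\lambda)\,dS$, and recall $G^\dia_\nu=F^\dia_\nu+g^\dia_\nu$ with $g^\dia_\nu=-(\Delta^{\tbfm}-\lambda)^{-1}f^\dia_\nu$. The plan is to insert the spectral decomposition $(\Delta^{\tbfm}-\lambda)^{-1}=\sum_n (\lambda_n-\lambda)^{-1}(\cdot,u_n)u_n$ into the inner product $\int_X G^h_\mu G^h_\nu\,dS$. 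The cut-off-dependent piece $F^h_\mu$ must be handled by observing that $\int_X(\Delta^{\tbfm}-\lambda)u_n\cdot F^h_\mu\,dS$ together with the term coming from $g^h_\mu$ reassembles, via Green's formula \eqref{qe3}, into $\Lambda^h_\mu(u_n)$; more precisely \eqref{qe3} says $\Lambda^h_\mu(u_n)=\int_X(\Delta^{\tbfm}-\lambda)u_n\cdot G^h_\mu\,dS=(\lambda_n-\lambda)\int_X u_n G^h_\mu\,dS$, so $\int_X u_n\,G^h_\mu\,dS = (\lambda_n-\lambda)^{-1}\Lambda^h_\mu(u_n)$. Expanding $\int_X G^h_\mu G^h_\nu\,dS$ — here one of the two $G$'s contributes its $F$-part which is orthogonal to the $L^2$-span issue, handled by the identity $\int_X G^h_\mu f^h_\nu\,dS=-S^{hh}_{\mu\nu}$ plus the resolvent trace — leads to $\dot S^{hh}_{\mu\nu}(\lambda)=\sum_n (\lambda_n-\lambda)^{-2}\Lambda^h_\mu(u_n)\Lambda^h_\nu(u_n)$ after carefully checking the $F$-against-$F$ and $F$-against-$g$ boundary contributions cancel (they are cut-off artifacts and $G$ is cut-off independent). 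Then
\[
\Tr\big(A\,\partial_\lambda S^{hh}(\lambda)\big)=\sum_{\mu,\nu}a_{\mu\nu}\dot S^{hh}_{\nu\mu}(\lambda)=\sum_n(\lambda_n-\lambda)^{-2}\sum_{\mu+\nu=1}a_{\mu\nu}\Lambda^h_\nu(u_n)\Lambda^h_\mu(u_n)=\sum_n\frac{\partial_w\lambda_n}{(\lambda_n-\lambda)^2},
\]
using Lemma~\ref{lem:defA} in the last equality; note $a_{\mu\nu}$ is symmetric and supported on $\mu+\nu=1$, so the matrix trace picks out exactly the right combination.

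The main obstacle I anticipate is the careful bookkeeping of the cut-off-dependent terms $F^h_\nu$ in the expansion of $\dot S^{hh}_{\mu\nu}$: one must show that $\int_X F^h_\mu F^h_\nu\,dS$ and the cross terms $\int_X F^h_\mu g^h_\nu\,dS$ either vanish or cancel against each other so that only the eigenfunction-expansion piece survives, and that all interchanges of summation and integration are justified by the absolute convergence established in the first step. A clean way to finesse this is to work directly with $\dot g^\dia_\nu=(\Delta^{\tbfm}-\lambda)^{-1}G^\dia_\nu$, so that $\dot S^{hh}_{\mu\nu}=\Lambda^h_\mu(\dot g^h_\nu)=\Lambda^h_\mu\big((\Delta^{\tbfm}-\lambda)^{-1}G^h_\nu\big)$; since $G^h_\nu\in H^2(X)\subset L^2(X)$ one expands $G^h_\nu=\sum_n(G^h_\nu,u_n)u_n$, applies the bounded functional $\Lambda^h_\mu$ termwise (legitimate once the series is shown to converge in $H^2$, which follows from $G^h_\nu\in\dom\Delta^{\tbfm}$ and $(\Delta^{\tbfm}-\lambda)^{-1}$ mapping into $\dom\Delta^{\tbfm}$), and uses $(G^h_\nu,u_n)=(\lambda_n-\lambda)^{-1}\Lambda^h_\nu(u_n)$ together with $\Lambda^h_\mu(u_n)/(\lambda_n-\lambda)$ as above. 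This avoids the explicit $F$-versus-$g$ splitting entirely and is the route I would take.
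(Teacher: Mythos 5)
Your proposal is correct and, in its final ``clean way,'' coincides with the paper's argument: the paper proves absolute convergence from $\Lambda^h_\nu(u_n)=(\lambda_n-\lambda)\langle G^h_\nu,u_n\rangle$ (so the normalized coefficients are Fourier coefficients of $G^h_\nu\in L^2$), then identifies the sum with $\sum_{\mu,\nu}a_{\mu\nu}\int_X G^h_\mu G^h_\nu\,dS$ by Plancherel and invokes \eqref{eq:dotS} plus the symmetry of $A$ --- exactly the two identities you use. The lengthy worry about $F$-versus-$g$ cut-off cancellations in your middle paragraph is unnecessary, as you yourself conclude, since \eqref{eq:dotS} already packages $\dot S^{hh}_{\mu\nu}$ as $\int_X G^h_\mu G^h_\nu\,dS$ with no splitting required.
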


 \begin{proof}
  To prove the absolute convergence it suffices to show that for any $\nu$ we have
\[
\sum_{\lambda_n\in \spec(\Delta^{\tbfm})}  \left | \frac{\Lambda^h_\nu(u_n)}{\lambda_n-\lambda} \right |^2 \,<\,\infty.
\]
Since
\[
\Lambda^h_\nu(u_n) \,= \, \int (\Delta-\lambda)u_n G^h_\nu \,=\, (\lambda_n-\lambda) \langle G^h_\nu, u\rangle,
\]
the claim follows by remarking that the eigenfunctions $u_n$ form an orthonormal basis.
By Plancherel formula, we then obtain
\[
\sum_{\lambda_n\in \spec(\Delta^{\tbfm})}  \frac{\partial_w \lambda_n}{(\lambda_n-\lambda)^2} \,=\,\sum_{\mu,\nu} a_{\mu\nu} \int_X G^h_\mu(x\,;\,\lambda) G^h_\nu(x\,;\,\lambda)\,dS.
\]
We now remark that using \eqref{eq:dotS}
\[
 \int_X G^h_\mu(x\,;\,\lambda) G^h_\nu(x\,;\,\lambda)\,dS \,=\,\frac{\partial_\lambda S^{hh}_{\mu\nu}}{\partial \lambda},
\]
and $A$ is a symmetric matrix.
 \end{proof}

 \begin{remark} For
$\sum_{\lambda_n\in \spec(\Delta^{\tbfm})}  {\partial \lambda_n}{(\lambda_n-\lambda)^{-2}}$
we also have a similar formula involving $S^{aa}$.
 \end{remark}

\subsection{Variational formula for $\zeta'(0\,;\,\Delta^{\tbfm} )$}
We prove the following proposition.

\begin{proposition}
Let $\tbfm_w$ be the family of metrics defined above, $S^{hh}$ be the holomorphic-holomorphic part
of the corresponding $S$-matrix and $A$ be the matrix defined in Lemma \ref{lem:defA}. We have
\begin{equation}
\partial_w \zeta'(0\,;\,\Delta^{\tbfm})\,=\, \Tr\left (AS^{hh}(0)\right)\,=\,\sum_{\mu+\nu=1} \sqrt{\mu}\sqrt{\nu}S^{hh}_{\mu\nu}(0).
\end{equation}
\end{proposition}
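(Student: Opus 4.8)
\emph{Plan.} The idea is to combine the eigenvalue variational formula with the contour‑integral (Mittag--Leffler) representation of $\zeta'(0)$. For $\Re s$ large, $\zeta(s;\Delta^{\tbfm})=\sum_{\lambda_n>0}\lambda_n^{-s}$, and differentiating the real‑analytic branches term by term gives $\partial_w\zeta(s;\Delta^{\tbfm})=-s\sum_{\lambda_n>0}(\partial_w\lambda_n)\,\lambda_n^{-s-1}$ (the kernel contributes nothing, since the constant function stays a null eigenfunction so $\partial_w\lambda=0$ there). Hence $\partial_w\zeta'(0;\Delta^{\tbfm})=-F(0)$, where $F$ denotes the meromorphic continuation of $\sum_{\lambda_n>0}(\partial_w\lambda_n)\,\lambda_n^{-s-1}$; part of the work is to check that $F$ is in fact regular at $s=0$. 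Throughout I use the Kato--Rellich real‑analyticity of the eigenbranches of $\tbfm_{tw}$ and the local form of the perturbation from Lemma~\ref{lem:varq}.

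\emph{Step 1: rewrite $F$ through $S^{hh}$.} By Corollary~\ref{coro:acv}, the function $h(\lambda):=\sum_{\lambda_n>0}(\partial_w\lambda_n)(\lambda-\lambda_n)^{-2}=\Tr\!\big(A\,\partial_\lambda S^{hh}(\lambda)\big)$ is meromorphic, with only double poles, the coefficient of $(\lambda-\lambda_n)^{-2}$ at $\lambda_n$ being exactly $\partial_w\lambda_n$. Let $C$ be a contour encircling $\{\lambda_n>0\}$ counterclockwise (a disjoint union of small circles, one around each $\lambda_n$, none winding around $0$). Using $\mathrm{Res}_{\lambda=\lambda_n}\!\big(\lambda^{-s}h(\lambda)\big)=-s\,(\partial_w\lambda_n)\lambda_n^{-s-1}$ one gets, for $\Re s$ large,
\[
F(s)=-\frac{1}{2\pi i\,s}\int_C \lambda^{-s}\,\Tr\!\big(A\,\partial_\lambda S^{hh}(\lambda)\big)\,d\lambda .
\]
Integrating by parts in $\lambda$ along each circle of $C$ (no boundary terms, since $\lambda^{-s}S^{hh}(\lambda)$ is single‑valued near each $\lambda_n>0$) cancels the factor $1/s$ and gives
\[
F(s)=-\frac{1}{2\pi i}\int_C \lambda^{-s-1}\,\Tr\!\big(A\,S^{hh}(\lambda)\big)\,d\lambda ,
\]
which is manifestly holomorphic in $s$ near $0$; this \emph{is} the desired continuation, and $F(0)=-\frac{1}{2\pi i}\int_C \lambda^{-1}\,\Tr\!\big(A\,S^{hh}(\lambda)\big)\,d\lambda=-\sum_{\lambda_n>0}\mathrm{Res}_{\lambda=\lambda_n}\!\big(\lambda^{-1}\Tr(A S^{hh}(\lambda))\big)$.

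\emph{Step 2: collapse the contour.} The map $\lambda\mapsto\lambda^{-1}\Tr(AS^{hh}(\lambda))$ is meromorphic on all of $\C$, with poles only at $\lambda=0$ and at the $\lambda_n>0$. The corollary in Section~\ref{SM} describing $S(\lambda)$ as $\Re\lambda\to-\infty$, together with the order $-1$ pseudodifferential bounds of that section, shows it decays fast enough at $|\lambda|\to\infty$ off $\R_+$, so the sum of all its residues vanishes. Since $S^{hh}$ extends holomorphically to a neighbourhood of $0$ (the proposition on the behaviour at $\lambda=0$), the residue there is $\Tr(AS^{hh}(0))$, whence $\sum_{\lambda_n>0}\mathrm{Res}_{\lambda=\lambda_n}(\cdots)=-\Tr(AS^{hh}(0))$ and $F(0)=\Tr(AS^{hh}(0))$. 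Bookkeeping the orientation conventions fixed above then gives $\partial_w\zeta'(0;\Delta^{\tbfm})=\Tr(AS^{hh}(0))$. Finally, the definition of $A$ in Lemma~\ref{lem:defA} together with the values of the $c_\nu$ and the symmetry of $S^{hh}$ identify $\Tr(AS^{hh}(0))=\sum_{\mu,\nu}a_{\mu\nu}S^{hh}_{\nu\mu}(0)=\sum_{\mu+\nu=1}\sqrt{\mu}\sqrt{\nu}\,S^{hh}_{\mu\nu}(0)$, which is the asserted identity.

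\emph{Main obstacle.} The crux is to justify that $\partial_w$ commutes with $\zeta$‑regularization, i.e. that $\partial_w$ of the meromorphically continued $\zeta'(0;\Delta^{\tbfm})$ equals the continuation of $-s\sum(\partial_w\lambda_n)\lambda_n^{-s-1}$. This requires the short‑time heat‑trace expansion of $e^{-t\Delta^{\tbfm}}$ to depend smoothly and uniformly on $w$, which holds because the perturbation is supported near $P$ and has coefficients smooth (indeed polynomial in $(a,b)$) in $w$, cf.~\eqref{ploho} and Lemma~\ref{lem:varq}. One also needs the $\lambda$‑integrals to converge and the contour $C$ to be collapsible past the poles $\lambda_n\to\infty$; here the absolute convergence in Corollary~\ref{coro:acv} and the decay of $S(\lambda)$ at infinity (Section~\ref{SM}) are what is used.
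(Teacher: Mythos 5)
Your overall strategy (differentiate the eigenvalue sum term by term via Corollary~\ref{coro:acv}, convert to a contour integral against $\Tr(A\,\partial_\lambda S^{hh})$, integrate by parts, and use the behaviour of $S^{hh}$ to evaluate at $s=0$) is the right one and is close to the paper's. However, your choice of contour creates a genuine gap at the two places where the argument actually has to do work. You take $C$ to be a \emph{disjoint union of small circles, one around each eigenvalue} $\lambda_n$. With that choice, the integration by parts on each circle merely reproduces, term by term, the residue $-\lambda_n^{-s-1}\partial_w\lambda_n$, so the expression $-\frac{1}{2\pi i}\int_C\lambda^{-s-1}\Tr(AS^{hh}(\lambda))\,d\lambda$ is literally the same series $\sum_n(\partial_w\lambda_n)\lambda_n^{-s-1}$ you started with. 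It is therefore \emph{not} ``manifestly holomorphic near $s=0$'': Corollary~\ref{coro:acv} only gives $\sum_n|\partial_w\lambda_n|\,\lambda_n^{-2}<\infty$, which does not imply convergence of $\sum_n|\partial_w\lambda_n|\,\lambda_n^{-1}$, and in general that sum diverges — this divergence is precisely why a $\zeta$-regularization is needed. So Step~1 does not produce an analytic continuation of $F$.

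Step~2 inherits the same problem. The identity ``sum of all residues of $\lambda^{-1}\Tr(AS^{hh}(\lambda))$ vanishes'' for a meromorphic function with infinitely many poles accumulating at $+\infty$ requires a sequence of expanding contours on which the integral tends to zero. The only decay you have for $S^{hh}(\lambda)$ is the $O(|\lambda|^{-\infty})$ estimate as $\Re\lambda\to-\infty$; near the positive real axis $\Tr(AS^{hh}(\lambda))$ has poles at every $\lambda_n$ and no uniform bound on circles threading between them is available (or established in the paper). The paper avoids both difficulties by using a \emph{single connected} contour $\Gamma_\lambda$ that hugs the cut $(-\infty,\lambda]$ and stays uniformly away from the spectrum: along $\Gamma_\lambda$ the rapid decay of $S^{hh}$ as $\Re z\to-\infty$ makes $\int_{\Gamma_\lambda}(z-\lambda)^{-s-1}\Tr(AS^{hh}(z))\,dz$ converge for all $s$ near $0$ (this \emph{is} the continuation), and Cauchy's theorem is then applied to collapse $\Gamma_\lambda$ onto the single pole at $z=\lambda$ \emph{inside} it — not onto the infinitely many poles $\lambda_n$ outside it — giving $\Tr(AS^{hh}(\lambda))$ and then $\Tr(AS^{hh}(0))$ by the holomorphic extension of $S^{hh}$ to a neighbourhood of $0$. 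To repair your proof, replace your union of circles by such a contour separating a regular reference point from the spectrum; the rest of your computation (the residue bookkeeping, the use of \eqref{eq:dotS}, and the final identification of $\Tr(AS^{hh}(0))$ with $\sum_{\mu+\nu=1}\sqrt{\mu}\sqrt{\nu}\,S^{hh}_{\mu\nu}(0)$) then goes through as in the paper.
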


\begin{proof}
We start from the following integral representation of the zeta-function of the operator $\Delta^{\tbfm}-\lambda$ through the trace of the second
 power of the resolvent:
\begin{equation}\label{eq:defzetcont}
s\zeta(s+1; \Delta^{\tbfm}-\lambda)=\frac{1}{2\pi i}\int_{\Gamma_\lambda}(z-\lambda)^{-s}{\rm Tr}\left((\Delta^{\tbfm}-z)^{-2}\right)dz,
\end{equation}
where $\Gamma_\lambda$ is a contour connecting $-\infty+i\epsilon$ with $-\infty-i\epsilon$ and following the cut $(-\infty, \lambda)$ at the (sufficiently small) distance $\epsilon>0$.
Using Corollary \ref{coro:acv}, differentiation under the integral sign is legitimate and we obtain
\begin{equation}\label{pervoe}
s\partial_w{\zeta}(s+1, \Delta^{\tbfm}-\lambda)=\frac{1}{2\pi i} \int_{\Gamma_\lambda}(z-\lambda)^{-s}\sum_{\lambda_n\in \spec(\Delta^{\tbfm})}\frac{-2\partial_w \lambda_n}{(\lambda_n-z)^3}\,dz.
\end{equation}

Using again Corollary \ref{coro:acv}, it is legitimate to make an integration by parts under the integral sign to get
\begin{equation}
s\partial_w{\zeta}(s+1, \Delta^{\tbfm}-\lambda)\,=\,\frac{-s}{2\pi i} \int_{\Gamma_\lambda}(z-\lambda)^{-s-1}\sum_{\lambda_n\in \spec(\Delta^{\tbfm}}\frac{\partial_w \lambda_n}{(\lambda_n-z)^2}\,dz.
\end{equation}

We can now divide by $s$,  use Corollary \ref{coro:acv} once again, and replace $s+1$ by $s$ to finally obtain
\begin{equation}
\partial_w{\zeta}(s, \Delta^{\tbfm}-\lambda)\,=\,\frac{-1}{2\pi i} \int_{\Gamma_\lambda}(z-\lambda)^{-s}\Tr\left(A\partial_zS^{hh}(z)\right)\,dz.
\end{equation}

Using the behaviour  of $S^{hh}$ at infinity we can make an integration by parts again and obtain
\begin{equation}
\partial_w{\zeta}(s, \Delta^{\tbfm}-\lambda)\,=\,\frac{-s}{2\pi i} \int_{\Gamma_\lambda}(z-\lambda)^{-s-1}\Tr\left(A S^{hh}(z)\right)\,dz.
\end{equation}

Differentiating with respect to $s$ and setting $s=0$ gives
\[
\partial_w{\zeta}'(0, \Delta^{\tbfm}-\lambda)\,=\,\frac{-1}{2\pi i} \int_{\Gamma_\lambda}(z-\lambda)^{-1}\Tr\left(A S^{hh}(z)\right)\,dz.
\]
The claim follows by applying Cauchy's theorem.
\end{proof}

Now, using Proposition \ref{relord}, the preceding Proposition and Proposition \ref{clue}  we arrive at the following corollary.

\begin{corollary}\label{varrodin} Let $P_m$ be a zero of the
  meromorphic differential $df$ of multiplicity $\ell_m$ and let
  $z_m=f(P_m)$ be the corresponding critical value of $f$. Let also
  $x_m=(z-z_m)^{\frac{1}{\ell_m+1}}$ be the distinguished local
  parameter in a vicinity of $P_m$. Then
\begin{equation}\label{varodin}
\partial_{z_m}\ln {\rm det}_\zeta (\Delta,\mathring\Delta) %
\,=\,\frac{1}{6(\ell_m+1)(\ell_m-1)!}\left(\frac{d}{dx_m}\right)^{\ell_m-1}S_{Sch}(x_m)\Big|_{x_m=0}\,.
\end{equation}
 \end{corollary}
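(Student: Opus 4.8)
The plan is to assemble the corollary from the three results just proved — the preceding Proposition, Proposition~\ref{clue}, and Proposition~\ref{relord} — once a single identification has been made explicit.

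The first step is to recognize that the abstract deformation $\tbfm_w$ of Section~\ref{sss} is precisely the local model for moving the critical value $z_m$. Near the ramification point $P_m$ the covering map has the normal form $f=z_m+x_m^{\ell_m+1}$ with $x_m=(z-z_m)^{1/(\ell_m+1)}$, so $df=(\ell_m+1)x_m^{\ell_m}\,dx_m$; thus $x_m$ is the distinguished local parameter for $\bfm=|df|^2$ and the conical angle at $P_m$ is $2\pi(\ell_m+1)$. Replacing $z_m$ by $z_m+w$ turns $f-z_m$ into $f-z_m-w$, i.e. it replaces the cone over a neighbourhood of $z_m$ by the $(\ell_m+1)$-fold cover of the $z$-plane ramified at $z_m+w$ — which is exactly the family $\tbfm_w$ of Section~\ref{sss} with $\ell=\ell_m+1$. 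This deformation is supported in a small ball around $P_m$, outside of which $\tbfm$ (in particular the smooth caps built in Section~\ref{sec:Closing}) is unchanged, so the directional derivative $\partial_w|_{w=0}$ applied to $\zeta'(0;\Delta^{\tbfm_w})$ coincides with the genuine Hurwitz moduli derivative $\partial_{z_m}$ applied to $\zeta'(0;\Delta^{\tbfm})$. In particular Section~\ref{sss} is applicable, and we may use the results there with the index $\ell$ there equal to $\ell_m+1$.

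The second step is the chain of equalities. Since $\ln\det_\zeta^*\Delta^{\tbfm}=-\zeta'(0;\Delta^{\tbfm})$, the preceding Proposition gives
$$
\partial_{z_m}\ln\det_\zeta^*\Delta^{\tbfm}=-\partial_w\zeta'(0;\Delta^{\tbfm})=-\sum_{\mu+\nu=1}\sqrt{\mu}\sqrt{\nu}\,S^{hh}_{\mu\nu}(0).
$$
Applying Proposition~\ref{clue} at the conical point $P_m$ with $\ell=\ell_m+1$ — so that $\ell(\ell-2)!=(\ell_m+1)(\ell_m-1)!$ and $(d/dx)^{\ell-2}=(d/dx_m)^{\ell_m-1}$ — the right-hand side becomes
$$
-\sum_{\mu+\nu=1}\sqrt{\mu}\sqrt{\nu}\,S^{hh}_{\mu\nu}(0)=\frac{1}{6(\ell_m+1)(\ell_m-1)!}\left(\frac{d}{dx_m}\right)^{\ell_m-1}S_{Sch}(x_m)\Big|_{x_m=0}.
$$
Finally, Proposition~\ref{relord} allows the replacement of $\partial_{z_m}\ln\det_\zeta^*\Delta^{\tbfm}$ by $\partial_{z_m}\ln\det_\zeta(\Delta,\mathring\Delta)$, which is exactly \eqref{varodin}.

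The only step that is more than bookkeeping is the identification in the first step — that $\tbfm_w$ faithfully realizes, to first order, the variation of $|df|^2$ along the modulus $z_m$; this is what makes the perturbation-theoretic machinery of Section~\ref{sss} (and hence the preceding Proposition) applicable at all. Everything after that is a one-line composition of previously established identities, the two points requiring care being the sign coming from $\ln\det^{*}=-\zeta'(0)$ and the index substitution $\ell\mapsto\ell_m+1$ in Proposition~\ref{clue}.
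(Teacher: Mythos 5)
Your proposal is correct and follows exactly the route the paper takes: the paper derives the corollary by combining Proposition~\ref{relord}, the variational formula $\partial_w\zeta'(0;\Delta^{\tbfm})=\sum_{\mu+\nu=1}\sqrt{\mu}\sqrt{\nu}S^{hh}_{\mu\nu}(0)$, and Proposition~\ref{clue} with $\ell=\ell_m+1$, precisely as you do. Your explicit identification of the deformation $\tbfm_w$ with the motion of the critical value $z_m$, and your tracking of the sign from $\ln\det^*_\zeta=-\zeta'(0)$ against the minus sign in \eqref{eq:clue}, correctly fill in the bookkeeping the paper leaves implicit.
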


\section{Integration of the equations for $\ln {\rm Det}$ and explicit expressions for the $\tau$-function}

Let, as before, ${\mathbb B}$ be the matrix of $b$-periods of the
Torelli marked Riemann surface $X$ and let $\{v_\alpha\}_{\alpha=1,
  \dots, g}$ be the basis of the normalized holomorphic differentials on $X$.
Using the Rauch formulas (see, e. g., \cite{KK-IMRN}, \cite{KK-DG}, \cite{KK-MPAG}),
$$\partial_{z_m}{\mathbb B}_{\alpha \beta}=\oint_{P_m}\frac{v_\alpha v_\beta}{df}\,,$$
one immediately gets the
relation
\begin{equation}\label{imB}
\partial_{z_m}\ln {\rm det}\Im {\mathbb B}=\frac{1}{2i}{\rm Tr}\left[(\partial_{z_m}{\mathbb B})(\Im {\mathbb B})^{-1}\right]=
\frac{1}{2i}\oint_{P_m}\frac{\sum_{\alpha \beta}\Im {\mathbb B}^{-1}_{\alpha \beta}v_\alpha v_\beta}{df}\,,
\end{equation}
where the contour integrals are taken over a small contour on $X$ encircling the point $P_m$ (in the positive direction).

Now, using the relation (\ref{connection}), the equations (\ref{imB})
and (\ref{varodin}) together with elementary properties of the
Schwarzian derivative (see, e. g. \cite{Tyurin}),  we arrive at the
following version of Corollary \ref{varrodin} rewritten in the invariant form.
\begin{thm}
Let $P_m$ be a zero of the meromorphic differential $df$ of
multiplicity $\ell_m$ and let $z_m=f(P_m)$ be the corresponding critical
value of $f$. Let also $x_m=(z-z_m)^{\frac{1}{\ell_m+1}}$ be the
distinguished local parameter in a vicinity of $P_m$. Then
\begin{equation}\label{vardwa}
\partial_{z_m}\ln \frac{{\rm det}_\zeta (\Delta,\mathring\Delta)}{{\rm
    det}\Im {\mathbb B}} =-\frac{1}{12\pi i}\oint_{P_m}\frac{S_B-S_f}
{df},
\end{equation}
where $S_B$ is the Bergman projective connection and $S_f=\frac{f'''f'-\frac{3}{2}(f'')^2}{(f')^2}$ is the Schwarzian derivative.
\end{thm}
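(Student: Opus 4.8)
The plan is to start from the local variational formula of Corollary~\ref{varrodin} and convert its right-hand side into an intrinsic contour integral, absorbing along the way the $\det\Im\mathbb B$ term produced by the Rauch variational formula for the period matrix.

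First I would insert the decomposition (\ref{connection}), $S_{Sch}(x)=S_B(x)-6\pi\sum_{i,j}(\Im\mathbb B)^{-1}_{ij}v_i(x)v_j(x)$, into the right-hand side of (\ref{varodin}). This splits $\partial_{z_m}\ln\det_\zeta(\Delta,\mathring\Delta)$ into a ``Bergman part'' involving $\left(\frac{d}{dx_m}\right)^{\ell_m-1}S_B(x_m)\big|_{x_m=0}$ and a ``period part'' involving the same derivative, in the distinguished parameter $x_m$, of the holomorphic quadratic differential $\sum_{i,j}(\Im\mathbb B)^{-1}_{ij}v_iv_j$. Using $df=(\ell_m+1)x_m^{\ell_m}dx_m$ near $P_m$, each such derivative-at-$P_m$ gets rewritten as $2\pi i$ times a residue; for the period part this residue is exactly $\frac{1}{2\pi i}\oint_{P_m}\frac{\sum_{i,j}(\Im\mathbb B)^{-1}_{ij}v_iv_j}{df}$, which by (\ref{imB}) --- that is, by the Rauch formula $\partial_{z_m}\mathbb B_{\alpha\beta}=\oint_{P_m}\frac{v_\alpha v_\beta}{df}$ --- is a fixed multiple of $\partial_{z_m}\ln\det\Im\mathbb B$. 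Moving it to the left-hand side turns $\partial_{z_m}\ln\det_\zeta(\Delta,\mathring\Delta)$ into $\partial_{z_m}\ln\frac{\det_\zeta(\Delta,\mathring\Delta)}{\det\Im\mathbb B}$.

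The crux is then to recognize the remaining Bergman part as $-\frac{1}{12\pi i}\oint_{P_m}\frac{S_B-S_f}{df}$. Here the point is that $S_B$, being a projective connection, transforms under a change of holomorphic coordinate by the Schwarzian cocycle, so $S_B/df$ is not a well-defined $1$-form; but the Schwarzian derivative $S_f=\frac{f'''f'-\frac{3}{2}(f'')^2}{(f')^2}$ of the meromorphic function $f$ obeys the very same cocycle, so $S_B-S_f$ is a genuine meromorphic quadratic differential on $X$ and $\frac{S_B-S_f}{df}$ a bona fide meromorphic $1$-form, holomorphic in a punctured neighbourhood of $P_m$. It remains to compute $\oint_{P_m}\frac{S_B-S_f}{df}=2\pi i\operatorname{Res}_{P_m}\frac{S_B-S_f}{df}$ in the distinguished parameter: since $f-z_m=x_m^{\ell_m+1}$ one has $S_f=\{x_m^{\ell_m+1};x_m\}=\frac{1-(\ell_m+1)^2}{2x_m^2}$, and combining this with $df=(\ell_m+1)x_m^{\ell_m}dx_m$ one reads off the residue as a universal constant times $\left(\frac{d}{dx_m}\right)^{\ell_m-1}S_B(x_m)\big|_{x_m=0}$ (plus whatever the $S_f$ term contributes to the $x_m^{-1}$ coefficient, which has to be tracked carefully). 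Matching these constants --- this is where the ``elementary properties of the Schwarzian derivative'' enter --- yields the asserted identity, and since $S_B-S_f$ and $df$ are coordinate independent, it is automatically in invariant form.

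The main obstacle I anticipate is the sign-and-normalization bookkeeping: keeping consistent conventions for $\oint_{P_m}$ versus $\operatorname{Res}_{P_m}$, for the distinguished parameter and the branching order $\ell_m+1$, for the many factors of $2\pi i$, $\ell_m+1$ and $(\ell_m-1)!$, and in particular verifying that the contributions to the residue at $P_m$ of the $S_f$ term and of the $\sum(\Im\mathbb B)^{-1}v_iv_j$ term combine with the $\frac16 S_{Sch}$ coefficient to produce exactly $-\frac{1}{12\pi i}$ in front. One also needs the identification $\partial_w=\partial_{z_m}$ from Section~\ref{sss} used with the correct orientation, and the standard fact that a ratio (quadratic differential)/($1$-form) has no pole in a small disc around $P_m$ other than at $P_m$, so that the contour integral reduces to the single residue at $P_m$.
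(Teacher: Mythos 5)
Your proposal follows exactly the paper's (very terse) derivation: substitute \eqref{connection} into \eqref{varodin}, absorb the $\sum_{i,j}(\Im{\mathbb B})^{-1}_{ij}v_iv_j$ term via the Rauch identity \eqref{imB}, and identify the remaining Bergman part with the residue of $(S_B-S_f)/df$ at $P_m$. One simplification in your final bookkeeping: since $f-z_m=x_m^{\ell_m+1}$ gives $S_f=\frac{1-(\ell_m+1)^2}{2x_m^2}$, the term $S_f/df$ has a pole of order $\ell_m+2\ge 3$ and contributes nothing to the residue at $P_m$ --- its only role is to make $S_B-S_f$ a genuine quadratic differential so that the right-hand side is coordinate-free.
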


\begin{remark}Notice that the difference $ S_B-S_f$ is a quadratic differential and, therefore, the integrand in \eqref{vardwa} is a meromorphic one form. 
\end{remark}

It should be noted that the right hand side of (\ref{vardwa}) depends holomorphically on moduli $z_1, \dots, z_M$ and, therefore, one has
$$\partial_{z_m \bar z_n}^2\ln \frac{{\rm det}_\zeta (\Delta,\mathring\Delta)}{{\rm det}\Im {\mathbb B}}=0\,.$$
This implies the relation
\begin{equation}\label{glavotv}
{\rm det}_\zeta(\Delta,\mathring\Delta)=C\,{\rm det}\Im {\mathbb B}\,|\tau|^2\,,\end{equation}
where $\tau$ is a holomorphic function of moduli $z_1, \dots, z_M$
(actually, $\tau$ is a holomorphic section of some holomorphic line
bundle over the Hurwitz space, see \cite{KKZ} for further information;
here we restrict ourselves to local considerations: the reader may
assume for simplicity that all happens in a small vicinity of the
covering $f:X\to {\mathbb C}P^1$ in the Hurwitz space $\Hcal(M, N)$)
subject to the system of PDE
\begin{equation}\label{tau-gov}
\partial_{z_m}\ln \tau=-\frac{1}{12\pi i}\oint_{P_m}\frac{S_B-S_f}
{df}\,
\end{equation}
and $C$ is a moduli independent constant.

System of PDE (\ref{tau-gov}) first appeared in the context of the
theory of isomonodromic deformations and Frobenius manifolds in
\cite{KK-MPAG} and \cite{KK-IMRN}, where, in particular, it was
explicitly integrated. We remind these results in the next subsection.

\subsection{Explicit expressions for $\tau$}
In this section we recall explicit formulas for the holomorphic solution, $\tau$, of the system (\ref{tau-gov}) derived in \cite {KK-IMRN}, \cite{KK-MPAG}
(see also \cite{KS} and \cite{KK-DG} for alternative and more straightforward  proofs).
The result should be formulated separately for low genera $g=0, 1$ and for higher genus $g>1$. We start with the higher genus situation.

Let $g>1$. Take a nonsingular odd theta characteristic $\delta$ and consider the corresponding theta function
$\theta[\delta](t; {\mathbb B})$, where $t=(t_1,\dots,t_g)\in \C^g$. Put
$$\omega_\delta=\sum_{i=1}^g\frac{\del\theta[\delta]}{\del t_i}\left(0; {\mathbb B}\right)\,\omega_i\,.$$
All zeroes of the holomorphic 1-differential $\omega_\delta$ have even multiplicities, and
$\sqrt{\omega_\delta}$ is a well-defined holomorphic spinor on $X$. Following Fay \cite{Fay}, consider
the prime form
\be
E(x,y)=\frac{\theta[\delta]\left(\int_x^y v_1,\dots,\int_x^y v_g; {\mathbb B}\right)}
{\sqrt{\omega_\delta}(x)\sqrt{\omega_\delta}(y)}.
\ee
To make the integrals uniquely defined, we fix $2g$ simple closed loops in the homology classes $a_i,b_i$
that cut $X$ into a connected domain, and pick the integration paths that do not intersect the cuts.
The sign of the square root is chosen so that
$$E(x,y)=\frac{\zeta(y)-\zeta(x)}{\sqrt{d\zeta}(x)\sqrt{d\zeta}(y)}(1+O((\zeta(y)-\zeta(x))^2))$$ as $y\rightarrow x$,
where $\zeta$ is a local parameter such that $d\zeta=\omega_\delta$.

We introduce local coordinates on $X$ that we call {\it distinguished} with respect to $f$.
Consider the divisor $(df)=\sum_k d_k\,p_k,\;p_k\in X,\;d_k\in\Z,d_k\neq 0,$ of the meromorphic differential $df$.
We take $z=f(x)$ as a local coordinate on $X-\bigcup_k p_k$ and
\be
x_k=\begin{cases}\left(f(x)-f(p_k)\right)^{\frac{1}{d_k+1}}&\text{if $d_k>0$,}\\
f(x)^{\frac{1}{d_k+1}}&\text{if $d_k<0$,}\end{cases}\label{np}
\ee
near $p_k\in X$. In terms of these coordinates we have
$E(x,y)=\frac{E(z(x),z(y))}{\sqrt{dz}(x)\sqrt{dz}(y)}$, and we define
\begin{eqnarray*}
E(z,p_k)&=&\lim_{y\rightarrow p_k}E(z(x),z(y))\sqrt{\frac{dz_k}{dz}}(y),\\
E(p_k,p_l)&=&\lim_{\stackrel{\scriptstyle x\rightarrow p_k}{y\rightarrow p_l}}E(z(x),z(y))
\sqrt{\frac{dx_k}{d\zeta}}(x)\sqrt{\frac{dx_l}{d\zeta}}(y)\,.
\end{eqnarray*}
Let ${\mathcal A}^x$ be the Abel map with the basepoint $x$, and let $K^x=(K^x_1,\dots,K^x_g)$ be the vector of Riemann constants
\be
K^x_i=\frac{1}{2}+\frac{1}{2}{\mathbb B}_{ii}-\sum_{j\neq i}\int_{a_i}\left(v_i(y)\int_x^y v_j\right)dy
\label{rc}\ee
(as above, we assume that the integration paths do not intersect the cuts on $X$).
Then we have ${\mathcal A}^x((df))+2K^x=\Omega Z+Z'$ for some $Z,Z'\in\Z^g$ . One has the following expression for the holomorphic solution to
(\ref{tau-gov})(see \cite{KK-IMRN}, here we follow the presentation of this result in \cite{KKZ}):
\be
\tau(X, f)=
\f{\left(\left.\left(\sum_{i=1}^gv_i(\zeta)\frac{\partial}{\partial t_i}\right)^g
\theta(t;{\mathbb B})\right|_{t=K^{\zeta}}\right)^{2/3}}{e^{6^{-1}\pi\sqrt{-1}\langle{\mathbb B} Z+4K^\zeta,Z\rangle}\;W(\zeta)^{2/3}}
\frac{\prod_{k<l}E(p_k,p_l)^{6^{-1}d_k d_l}}
{\prod_k E(\zeta,p_k)^{3^{-1}(g-1)d_k}}\;.
\label{Eg}
\ee
Here $\theta(t;{\mathbb B})=\theta[0](t;{\mathbb B})$ is the Riemann theta function, $t=(t_1,\dots,t_g)\in\C^g$,
and $W$ is the Wronskian of the normalized holomorphic differentials $v_1,\dots,v_g$ on
$X$; the expression in (\ref{Eg}) is independent of $\zeta\in X$.

Let $g=1$. Then the function $\tau(X, f)$ is given (see \cite{KK-MPAG}) by the equation

\begin{equation}\label{E1}
\tau(X, f)=[{\theta_1}'(0\,|\,{\mathbb B})]^{2/3}\frac{\prod_{k=1}^{K} h_j^{(k_j+1)/12}}{\prod_{m=1}^M f_m^{\ell_m/12}}\ \,
,
\end{equation}
where $v(P)$ is the normalized Abelian differential on the elliptic
Torelli marked curve $X$; $v(P)=f_m(x_m)dx_m$ near $P_m$, where
$x_m=(z-z_m)^{1/(\ell_m+1)}$  is the distinguished local parameter
near the zero, $P_m$ of the differential $df$;  $f_m\equiv f_m(0)$;
$v(P)=h_j(\zeta_j)d\zeta_j$ as
$P\to \infty_j$, $\zeta_j=z^{-1/k_j}$, where $k_j$ is the multiplicity
of the pole $\infty_j$ of $f$,   $h_k\equiv h_k(0)$;  $\theta_1$ is
the Jacobi theta-function.

Let $g=0$ and let $U:X\rightarrow \Pl$ be a biholomorphic map such that $U(\infty_1)=\infty$ and $U(P)=(f(P))^{1/k_1}+o(1)$
as $P\to \infty_1$. Then (see \cite{KS})
\begin{equation}\label{E0}
\tau(X, f)=
\frac
{\prod_{j=2}^K(\frac{dU}{d\zeta_j}\big|_{\zeta_j=0})^{(k_j+1)/12}}
{\prod_{m=1}^M(\frac{dU}{dx_m}\big|_{x_m=0})^{l_m/12}}\,.
\end{equation}

Summarizing (\ref{glavotv}) and (\ref{E0},\ref{E1} \ref{Eg}), we get the main result of the present paper.
\begin{thm} Let $(X, f)$ be an element of the Hurwitz space $\Hcal(M, N)$ and let $\tau(X, f)$ be given by expressions (\ref{E0},\ref{E1}, \ref{Eg}).
There is the following explicit expression for the regularized relative determinant of the Laplacian $\Delta$ on the Riemann surface $X$:
\begin{equation}
{\rm det}_\zeta (\Delta,\mathring\Delta)=C\,{\rm det}\Im {\mathbb B}\,|\tau|^2\,,\end{equation}
 where $C$ is a constant dependent only on the connected component of the space $\Hcal(M, N)$ containing the element $(X, f)$.
\end{thm}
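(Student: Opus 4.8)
The plan is to deduce the statement from the variational identity \eqref{vardwa} together with the explicit formulas for $\tau$ recalled above, so that the argument is essentially one of assembly. First I would invoke \eqref{vardwa}: for every $m=1,\dots,M$ the real function
\[
\phi:=\ln\frac{{\rm det}_\zeta(\Delta,\mathring\Delta)}{{\rm det}\,\Im{\mathbb B}}
\]
satisfies $\partial_{z_m}\phi=-\frac{1}{12\pi i}\oint_{P_m}\frac{S_B-S_f}{df}$, which depends holomorphically on the moduli $z_1,\dots,z_M$. Taking complex conjugates and using that $\phi$ is real gives $\partial_{\bar z_n}\phi=\overline{\partial_{z_n}\phi}$, hence $\partial^2_{z_m\bar z_n}\phi=0$ for all $m,n$; thus $\phi$ is pluriharmonic and locally of the form $\phi=2\Re F(z)+\mathrm{const}$ with $F$ holomorphic in $z_1,\dots,z_M$. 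Setting $\tau:=e^{F}$ and exponentiating produces \eqref{glavotv}, namely ${\rm det}_\zeta(\Delta,\mathring\Delta)=C\,{\rm det}\,\Im{\mathbb B}\,|\tau|^2$ with $C$ locally constant, where by construction $\tau$ is a nonvanishing holomorphic solution of the system \eqref{tau-gov}.

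Next I would identify this $\tau$ with the explicit expressions. By the results of \cite{KK-IMRN}, \cite{KK-MPAG}, \cite{KS} (recalled above, with alternative derivations in \cite{KS}, \cite{KK-DG}), the functions defined by \eqref{E0}, \eqref{E1}, \eqref{Eg} in genus $0$, $1$ and $g>1$ respectively are nonvanishing holomorphic functions on the relevant coordinate chart of $\Hcal(M,N)$ that solve \eqref{tau-gov}. Since the right-hand side of \eqref{tau-gov} is an exact holomorphic $1$-form in the variables $z_m$, any two local solutions of that system differ by a nonzero multiplicative constant; therefore the $\tau$ obtained in the first step agrees, up to such a constant, with the explicit $\tau$, and absorbing this constant into $C$ yields the displayed formula with $\tau$ given by \eqref{E0}, \eqref{E1}, \eqref{Eg}.

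Finally, to see that $C$ depends only on the connected component of $\Hcal(M,N)$, I would observe that $C={\rm det}_\zeta(\Delta,\mathring\Delta)\big/\bigl({\rm det}\,\Im{\mathbb B}\,|\tau|^2\bigr)$ has vanishing $z_m$- and $\bar z_m$-derivatives for every $m$ by the above, while both numerator and denominator are positive real-analytic quantities on each component (using ${\rm det}_\zeta(\Delta,\mathring\Delta)>0$, together with Theorem \ref{M} and Proposition \ref{relord} to make global sense of the left-hand side, and the nonvanishing of the explicit $\tau$); hence $C$ is locally constant and so constant on each connected component. I expect the only delicate point in this last argument to be the nonvanishing and single-valuedness of the explicit $\tau$ on the chart, but this is part of the cited results; the real substance of the theorem lies in the earlier sections — the BFK-type gluing formula of Theorem \ref{M}, the reduction to the compactified metric $\tbfm$ in Proposition \ref{relord}, the $S$-matrix computation behind \eqref{vardwa}, and the explicit integration of \eqref{tau-gov} from \cite{KK-IMRN} — so no further estimate is required here.
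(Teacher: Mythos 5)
Your proposal is correct and follows essentially the same route as the paper: from the variational formula \eqref{vardwa} one deduces $\partial^2_{z_m\bar z_n}\ln\bigl({\rm det}_\zeta(\Delta,\mathring\Delta)/{\rm det}\,\Im{\mathbb B}\bigr)=0$, hence the factorization \eqref{glavotv} with a holomorphic $\tau$ solving \eqref{tau-gov}, which is then identified (up to a locally constant factor absorbed into $C$) with the explicit expressions \eqref{E0}, \eqref{E1}, \eqref{Eg} integrated in the cited references. Your write-up merely makes explicit the pluriharmonicity and uniqueness-up-to-constant steps that the paper leaves implicit.
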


\subsection{Examples in genus $0$}

We finish the paper with two simple and especially instructive examples of the calculation of the determinant of the Laplacian $\Delta$ in genus zero.

{\bf Example 1.}
Let $p$ be a polynomial with $N-1$ simple critical points $w_1, \dots, w_{N-1}$ and let the corresponding critical values be $z_1, \dots, z_{N-1}$ (or, what is the same, a ramified covering with $N-1$ simple branch points and one branch point of multiplicity $N$ over the point at infinity of the base.
In other words $p$ is an element of the Hurwitz space  $\Hcal_{0, N}([1]^N)$ of meromorphic functions of degree $N$
on the Riemann sphere $\Pl$ with a single pole of multiplicity $N$.

Let also $w$ be the holomorphic coordinate on the cover $\Pl$  (more precisely, on $\Pl\setminus \{\infty\}$) and $z$ be the holomorphic coordinate on the base $\Pl$. One can assume that the leading coefficient of the polynomial $p(w)$ is equal to one.

Introduce the distinguished local parameter $x_k=\sqrt{z-z_k}$ at $P_k$. Then for $x_k\neq 0$
one has
$$\frac{dw}{dx_k}=\frac{1}{z'(w)}2x_k=\frac{w-w_k}{p'(w)-p'(w_k)}\frac{2x_k}{w-w_k}$$

Passing to the limit $x_k\to 0$, one gets
$$2\left[w'(x_k)\Big|_{x_k=0}\right]^2=\frac{1}{p''(w_k)}$$

Thus,
\begin{equation}\label{chetyre}
\tau=\prod_{k=1}^{N-1}\left[w'(x_k)\Big|_{x_k=0}\right]^{-\frac{1}{12}}=\left\{\prod_{k=1}^{N-1}p''(w_k)\right\}^{\frac{1}{24}}={\cal R}\,(p', p'')^{\frac{1}{24}},\end{equation}
where ${\cal R}\,(f, g)$ is the resultant of polynomials $f$ and $g$ (since the $\tau$-function is defined up to multiplicative constant, the power of $2$ is omitted) and

$${\rm det}_\zeta(\Delta^{|dp|^2}, \mathring\Delta)=C|{\cal R}\,(p', p'')|^{\frac{1}{12}}\,.$$

{\bf Example 2.}

Let $r$ is a rational function with three simple poles (which can be assumed to coincide with $\infty, 0, 1$,
$$r(w)=aw-\frac{b}{w}-\frac{c}{w-1}+d;$$
i. e., $r$ is an element of the Hurwitz space $\Hcal_{0, 3}(1, 1, 1)$ of meromorphic functions on $P^1$ of degree three with three simple poles.

Introducing the local parameter $\zeta=\frac{1}{z}$ in vicinities of the poles $w=0$ and $w=1$ of the cover,
one gets for $\zeta\neq 0$
$$w'(\zeta)=-\frac{1}{r'(w)}r^2(w)$$
and, say, for $w=1$
$$w'(0)=-\lim_{w\to 1}\frac{1}{r'(w)}r^2(w)=-c\,.$$
Analogously, $w'(0)=-b$ at the pole $w=0$.
For the local parameter $\tilde w=w/a$ (for which $\tilde w(P)=z(P)+o(1)$ as $P$ tends to the pole $w=\infty$ of the cover) one has
$\tilde w'(0)=-b/a$ at the pole $w=0$ and $\tilde w'(0)=-c/a$ at the pole $w=1$.
On the other hand writing $r'(w)=\frac{f}{g}$, where $f$ and $g$ are two polynomials and introducing the
local parameter $x_k=\sqrt{z-z_k}$ near the critical point $w_k$ of $r$ ($k=1, 2, 3, 4$),
one gets similarly to (\ref{chetyre})
$$\tilde w'(x_k)=a^{-1}w'(x_k),$$
$$C\prod_{k=1}^4[w'(x_k)|_{x_k=0}]^2=\prod_{k=1}^4\frac{1}{r''(w_k)}=\frac{{\cal R}(f, g)}{{\cal R}(f, f')}$$
Calculating the resultants, one gets
$$\tau^{24}=a^3b^3c^3{\frak M}(a, b, c)\,$$
where
$${\frak M}(a, b, c)=a^3+b^3+c^3+3a^2b+3a^2c+3b^2a+3b^2c+3c^2a+3c^2b-21abc\,$$
and
$${\rm det}_\zeta(\Delta^{|dr|^2}, \mathring\Delta)=C|abc|^{1/4}|{\frak M}(a, b, c)|^{1/12}\,.$$

\end{document}